\newtheorem{thm}{Theorem}[section]
\newtheorem{lemma}[thm]{Lemma}
\newtheorem{cor}[thm]{Corollary}
\newtheorem{prop}[thm]{Proposition}
\newtheorem{rem}[thm]{Remark}
\newenvironment{proof}[1][Proof]{\textbf{#1.} }{\ \rule{0.5em}{0.5em}}
\def\C{\mathcal C}
\def\K{\mathcal K}
\def\D{\Delta_n}
\def\R{{\cal R}}
\def\L{\Lambda}
\def\S{\mathcal S}
\def\T{\Theta}
\def\G{\Gamma}
\def\O{\Omega}
\def\g{\gamma}
\def\s{\sigma}
\begin{document}
\title{Compact $n$-manifolds via $(n+1)$-colored graphs:\\ a new approach}

\bigskip

 \author[*]{Luigi GRASSELLI}
 \author[**]{Michele MULAZZANI}
\affil[*]{\small Dipartimento di Scienze e Metodi dell'Ingegneria, Universit\`a di Modena e Reggio Emilia (Italy)} \affil[**]{Dipartimento di Matematica and ARCES, Universit\`a di Bologna (Italy)}
\renewcommand\Authands{ and }

\maketitle

\begin{abstract}
We introduce a representation via $(n+1)$-colored graphs of compact $n$-manifolds with (possibly empty) boundary, which appears to be very convenient for computer aided study and tabulation. Our construction is a
generalization to arbitrary dimension of the one recently given by Cristofori and Mulazzani in dimension three, and it is dual to the one given by Pezzana in the seventies.
In this context we establish some results concerning the topology of the represented manifolds: suspension, fundamental groups, connected sums and moves between graphs representing the same manifold. Classification results of compact orientable $4$-manifolds representable by graphs up to six vertices are obtained, together with some properties of the G-degree of 5-colored graphs relating this approach to tensor models theory. 
\end{abstract}

\bigskip

\small{

\thanks{

{\it 2010 Mathematics Subject Classification:} Primary 57M27,
57N10. Secondary 57M15. 

\smallskip

{\it Key words and phrases:} compact manifolds, colored graphs, fundamental groups, dipole moves.

}
}

\bigskip
\bigskip

\section{\hskip -0.7cm . Introduction}

The representation of closed $n$-manifolds by means of $(n+1)$-colored graphs has
been introduced in the seventies by M. Pezzana's research group in Modena (see \cite{[FGG]}). In this type  of representation, an $(n+1)$-colored graph (which is an $(n+1)$-regular multigraph with a proper $(n+1)$-edge-coloration) represents a closed $n$-manifold if certain conditions on its subgraphs are satisfied. 

The study of this kind of representation has yielded several results, especially with regard to the definition of combinatorial invariants and their relations with topological properties of the represented manifolds (see \cite{[CC$_2$]}, \cite{[Cr$_1$]}, \cite{[CGG]} and \cite{[G$_4$]}).

During the eighties S. Lins introduced a representation of closed 3-manifolds via 4-colored graphs, with an alternative construction which is dual to Pezzana's one (see \cite{[Li]}). The extension of this representation to $3$-manifolds with boundary has been performed in \cite{[CM]}, where a compact $3$-manifold with (possibly empty) boundary is associated to any $4$-colored graph, this correspondence being surjective on the class of such manifolds without spherical boundary components. 

As a consequence, an efficient computer aided catalogation/classification of $3$-manifolds with boundary, up to some value of the order of the representing graphs, can be performed by this tool. For example, the complete classification of orientable $3$-manifolds with toric boundary representable by graphs of order $\le 14$ is given in \cite{[CFMT]} and \cite{[CFMT2]}. 

In this paper we generalize the above construction to the whole family of colored graphs of arbitrary degree $n+1$, showing how they represent compact $n$-manifolds with (possibly empty) boundary. This opens the possibility to introduce an efficient algorithm for
computer aided tabulation of $n$-manifolds with boundary, for any $n\ge 3$.

The construction is described in Section~3, while Section~4 deals with graph suspensions and their connections with the topological/simplicial suspension of the represented spaces.
A set of moves connecting graphs representing the same manifold is given in Section~5. However, these moves are  not sufficient to ensure the equivalence of any two graphs representing the same manifold. 
In Section~6 we associate to any $(n+1)$--colored graph a group which is strictly related to the fundamental group of the associated space and, therefore, it is a convenient tool for its direct computation (in many cases the two groups are in fact isomorphic). In Section~7 we establish the relation between the connected sum of graphs and the (possibly boundary) connected sum of the represented manifolds.  
In Section~8 a classification of all orientable 4-manifolds representable by colored graphs of order $\le 6$ is presented. Certain properties related with tensor models theory (see for example \cite{[Gu]} and \cite{[GR]}) involving the G-degree in dimension four are also obtained. More generally, as pointed out in \cite{[CCDG]}, the strong interaction between random tensor models and the topology of $(n+1)$-colored graphs makes significant the theme of enumeration and classification of all quasi-manifolds (or compact manifolds with boundary) represented by graphs of a given G-degree and the arguments presented in this paper might be a useful tool for this purpose.

\section{\hskip -0.7cm . Basic notions}

Throghout this paper all spaces and maps are considered in the PL-category, unless explicitely stated.

For $n\ge 1$, an {\it $n$-pseudomanifold} is a simplicial complex $K$ such that: (i) any $h$-simplex is the face of at least one $n$-simplex, (ii) each $(n-1)$-simplex is the face of exactly two $n$-simplexes and (iii) every two $n$-simplexes can be connected by means of a sequence of alternating $n$- and $(n-1)$-simplexes, each simplex being incident with the next one in the sequence. The notion of pseudomanifold naturally transfers to the underlying polyhedron $\vert K\vert$ of $K$. A pseudomanifold $K$ is an $n$-manifold out of a (possibly empty) subcomplex $S_K$ of dimension $\le n-2$, composed by the {\it singular} simplexes (i.e., the simplexes whose links are not spheres). We refer to $S_K$ as the {\it singular complex} of $K$ and to $\vert S_K\vert$ as the {\it singular set} of $\vert K\vert$. 

A {\it quasi-manifold} is a pseudomanifold such that the star of any simplex verifies condition (iii) above (see \cite{[G]}). When $n\ge 2$, an $n$-pseudomanifold $K$ is a quasi-manifold if and only if the link of any 0-simplex of $K$ is an $(n-1)$-quasi-manifold. It is easy to prove that the singular complex of an $n$-dimensional quasi-manifold has dimension $\le n-3$. 

For $n\ge 2$, a {\it singular $n$-manifold} is a quasi-manifold such that the link of any 0-simplex is a closed connected $(n-1)$-manifold. It easily follows that the link of any $h$-simplex of a singular manifold, with $h>0$, is an $(n-h-1)$-sphere. So the singular set of a singular manifold is a (possibly empty) finite set of points, and this property caracterizes singular manifolds among quasi-manifolds.
Note that in dimension three (resp. dimension two) any quasi-manifold is a singular manifold (resp. is a closed surface).

A {\it pseudo-simplicial complex} is an $n$-dimensional ball complex in which every $h$-ball, considered with all its faces, is abstractly isomorphic to the $h$-simplex complex. It is a fact that the first barycentric subdivision of a pseudo-simplicial complex is an abstract simplicial complex (see  \cite{[HW]}). The notions of pseudomanifolds, quasi-manifolds and singular manifolds can be extended to the setting of pseudo-simplicial complexes by considering their barycentric subdivisions.
If $K$ is a (pseudo-)simplicial complex we will denote by $\vert K\vert$ its underlying space. 

Let $n$ be a positive integer and $\G=(V(\G),E(\G))$ be a finite graph which is $(n+1)$-regular (i.e., any vertex has degree $n+1$), possibly with multiple edges but without loops. An \textit{$(n+1)$-edge-coloration} of $\G$ is any map $\g:E(\G)\to C$, where $\vert C\vert=n+1$. The coloration is called \textit{proper} if adjacent edges have different colors. An edge $e$ of $\G$ such that $\gamma(e)=c$ is also called a \textit{$c$-edge}. Usually we set $C=\D=\{0,1,\ldots,n\}$.

An \textit{$(n+1)$-colored graph} is a connected $(n+1)$-regular graph equipped with a proper $(n+1)$-coloration on the edges. It is easy to see that any $(n+1)$-colored graph has even order and it is well known that any bipartite $(n+1)$-regular graph admits a proper $(n+1)$-coloration (see \cite{[FW]}).

If $\Delta\subset\D$, denote by $\G_{\Delta}$ the subgraph of $\G$ obtained by dropping out from $\G$ all $c$-edges, for any $c\in\widehat\Delta=\D-\Delta$. Each connected component $\L$ of $\G_{\Delta}$ is called a \textit{$\Delta$-residue} - as well as a \textit{$\vert\Delta\vert$-residue} - of $\G$, indicated by $\L\prec\G$. Of course, 0-residues are vertices, 1-residues are edges, 2-residues are bicolored cycles with an even number of edges, also called {\it bigons}. An $h$-residue of $\G$ is called {\it essential} if $2\le h\le n$. The number of $\Delta$-residues of $\G$ will be denoted\footnote{For $\Delta=\{i,j\}$ we use the simplified notation $g_{i,j}$ instead of $g_{\{i,j\}}$.} by $g_{\Delta}$. An $(n+1)$-colored graph $\G$ is called {\it supercontracted} \footnote{Such type of graphs were called {\it contracted} in \cite{[G]} and in related subsequent papers, but in \cite{[CM]} the term contracted refers to a more general class of colored graphs.} if $g_{\widehat c}=1$, for any $c\in\D$, where $\widehat c=\D-\{c\}$. If $\L$ and $\L'$ are residues of $\G$ and $\L'$ is a proper subgraph of $\L$, then $\L'$ is also a residue of the colored graph $\L$.

The set of all $h$-residues of an $(n+1)$-colored graph $\G$ is denoted by $\R_h(\G)$ and the set $$\R(\G)=\bigcup_{0\le h\le n}\R_h(\G)$$ results to be partially ordered by the relation $\preceq$ (as usual $\preceq$ means either $\prec$ or $=$) and will play a central role in our discussion.


For elementary notions about graphs we refer to \cite{[Wh]} and
for general PL-topology we refer to \cite{[Gl]}, \cite{[HW]} and \cite{[Hu]}.

\section{\hskip -0.7cm . The construction}\label{construction}

\subsection{\hskip -0.7cm . The quasi-manifold $\widehat M_{\G}$}\label{quasi-manifold}

Given an $(n+1)$-colored graph $\G$, we associate to it an $n$-dimensional complex $\C_{\G}$, as well as its underlying space $\widehat M_{\G}=\vert \C_{\G}\vert$, obtained by attaching to $\G$ ``cone-like'' cells in one-to-one correspondence with the essential residues of $\G$, where the dimension of each cell is the number of colors of the associated residue. For the sake of conciseness, the $h$-skeleton of $\C_{\G}$ will be denoted by $\G^{(h)}$, for any $h=0,1,\ldots,n$. 

First of all we consider vertices and edges of $\G$ as 0-dimensional and 1-dimensional cells respectively, with the natural incidence structure. So the 0-skeleton $\G^{(0)}$ of $\C_{\G}$ is $V(\G)$ and the 1-skeleton $\G^{(1)}$ is the graph $\G$, considered (as well as its essential residues) as a 1-dimensional cellular complex in the usual way. Moreover define $\L^{(1)}=\L$, for any essential residue $\L$ of $\G$.

If $n=1$ then $\G$ is just a bigon and it has no essential residues. In this case $\C_{\G}=V(\G)\cup E(\G)$ and $\widehat M_{\G}=\G\cong S^1$.

If $n\ge 2$, we proceed by induction via a sequence of cone attachings $Y\to Y\cup C(X)$, where $X$ is a subspace of $Y$ and $C(X)=(X\times [0,1])/(x,1)\sim(x',1)$ is the cone over $X$ which is attached to $Y$ via the map $(x,0)\in C(X)\mapsto x\in Y$.\footnote{When we want to stress the presence of the cone vertex $V=(X\times\{1\})/\sim$ we will use the notation $C_V(X)$ instead of $C(X)$.} At each step $h=2,\ldots,n$ these attachings are in one-to-one correspondence with the elements of $\R_h(\G)$, according to the following algorithm.

By induction on $h=2,\ldots,n$ let $$\G^{(h)}=\G^{(h-1)}\bigcup_{\L\in \R_{h}(\G)}C(\L^{(h-1)})$$ and, for any essential $h'$-residue $\T$ of $\G$ with $h'> h$, define $\T^{(h)}=\T^{(h-1)}\cup_{\L\in \R_{h}(\T)}C(\L^{(h-1)})$, which is obviously a subspace of $\G^{(h)}$. 

The final result of this process, namely $\G^{(n)}$, is the space $\widehat M_{\G}$, and we say that $\G$ {\it represents} $\widehat M_{\G}$.

For any $h=2,\ldots,n$ the $h$-cells of $\C_{\G}$ are the cones $c_{\L}=C(
\L^{(h-1)})$, for any $\L\in\R_h(\G)$, and the vertex of the cone is denoted by $V_{\L}$. 
Moreover, we can consider any edge $e\in E(\G)$ as the result of a cone on its endpoints with vertex $V_e$, as well as any $v\in V(\G)$ can be considered as the result of a cone on the empty set with vertex $V_v=v$. As a consequence, each cell of $\C_{\G}$ is associated to a residue of $\G$ and it is a cone over the union of suitable cells of lower dimension. The set $\C_{\G}=\{c_{\L}\mid \L\in\R(\G)\}$ is called the {\it cone-complex associated to $\G$} and we have $$\widehat M_{\G}=\vert \C_{\G}\vert=\bigcup_{\L\in\R(\G)}c_{\L}=\bigcup_{\L\in\R_n(\G)}c_{\L}.$$ 
For any $\L\in\R_h(\G)$ the space $\widehat M_{\L}=\vert \C_{\L}\vert$ is an $(h-1)$-dimensional subspace of $\widehat M_{\G}$. In particular, $\widehat M_{\L}=S^0$ if $\L$ is a 1-residue and $\widehat M_{\L}=S^{-1}=\emptyset$ if $\L$ is a 0-residue. Observe that, with this notation, $c_{\L}=C(\widehat M_{\L})$. In the following $\bar c_{\L}$ will denote the cone-complex $\C_{\L}\cup\{c_{\L}\}$.

The set $\Upsilon=\{V_{\L}\mid \L\in\R(\G)\}$ of the cone-vertices is a 0-dimensional subspace of $\widehat M_{\G}$. A cell $c_{\L'}$ is a proper face of a cell $c_{\L}$ (written as usual $c_{\L'}<c_{\L}$) when $c_{\L'}\subset c_{\L}$. Therefore, $c_{\L'}< c_{\L}$ if and only if $\L'\prec\L$. 

It is worth noting that the cells of $\C_{\G}$ are not in general balls. In fact, an $h$-cell $c_{\L}$ is a ball if and only if $\widehat M_{\L}$ is an $(h-1)$-sphere. 

An $h$-residue $\L$ of $\G$ is called {\it ordinary} if $\widehat M_{\L}$ is an $(h-1)$-sphere, otherwise it is called {\it singular}.
Of course, all 0-, 1- and 2-residues are ordinary. As proved later (see Corollary~\ref{ferri}), $\widehat M_{\G}$ is a closed $n$-manifold if and only if all residues of $\G$ are ordinary and in this case the cone-complex $\C_{\G}$ results to be a genuine (regular) CW-complex. 

If $n=2$ the above construction just reduces to the attaching of a disk along its boundary to any bigon of $\G$, and therefore $\widehat M_{\G}$ is a closed surface. 

If $n=3$ the construction, which was introduced for closed 3-manifolds in \cite{[LM]}, has an additional step consisting in performing the cone over any 3-residue of $\G$, considered together with the disks previously attached to its bigons. As shown in \cite{[LM]}, $\widehat M_{\G}$ is a closed 3-manifold if and only if all 3-residues of $\G$ are ordinary.\footnote{The condition is equivalent to the arithmetic one: $g_0+g_3=g_2$, where $g_0,g_3$ and $g_2$ are respectively the number of vertices, 3-residues and bigons of $\G$ (see \cite{[LM]}).} 
On the contrary, if some 3-residue is singular then $\widehat M_{\G}$ is a 3-dimensional singular manifold whose singularities are the cone points of the cells corresponding to the singular 3-residues. Note that it is easy to check whether a 3-residue $\L$ is ordinary or not by Euler characteristic arguments: if $v$ and $b$ are the number of vertices and bigons of $\L$, then $\L$ is ordinary if and only if $b-v/2=2$. 

The cone-complex $\C_{\G}$ admits a natural ``barycentric'' subdivision $\C'_{\G}$, which results to be a simplicial complex, as follows.
The 0-simplexes of $\C'_{\G}$ are the cone vertices (i.e., the elements of $\Upsilon$), so they are in one-to-one correspondence with the elements of $\R(\G)$. 
The set of $h$-simplexes of $\C'_{\G}$ is in one-to-one correspondence with the sequences $(\L_0,\L_1,\ldots,\L_h)$ of residues of $\G$, such that $\L_0\prec\L_1\prec\cdots\prec\L_h$. Namely, the $h$-simplex $\s^h$ of $\C'_{\G}$ associated to $(\L_0,\L_1,\ldots,\L_h)$ has vertices $V_{\L_0},V_{\L_1},\ldots, V_{\L_h}$ and it is defined by applying to $V_{\L_0}$ the sequence of cone constructions corresponding to the residues $\L_1,\ldots,\L_h$, in this order. Therefore $$\s^h=\langle V_{\L_0},V_{\L_1},\ldots, V_{\L_h}\rangle=C_{V_{\L_h}}(C_{V_{\L_{h-1}}}(\cdots(C_{V_{\L_1}}(V_{\L_0}))\cdots)).$$ It is interesting to note that $\C'_{\G}$ is isomorphic to the order complex of the poset $\R(\G)$ (see Section 9 of \cite{[Bjo]}).
In the following with the notation $\langle V_{\L_0},V_{\L_1},\ldots, V_{\L_h}\rangle$ we always mean that $\L_0\prec\L_1\prec\cdots\prec\L_h$.

If $\L$ is an $h$-residue of $\G$, we denote by $c'_{\L}$ the subcomplex of $\C'_{\G}$ obtained by restricting the barycentric subdivision to the cell $c_{\L}$. Therefore a $k$-simplex $\s^k=\langle V_{\L_0},V_{\L_1},\ldots, V_{\L_k}\rangle$ is a simplex of $c'_{\L}$ if and only if $\L_k\preceq \L$. It is a standard fact that $c'_{\L}=V_{\L}\star\dot c'_{\L}$, where \hbox{$\dot c'_{\L}=\{\langle V_{\L_0},V_{\L_1},\ldots, V_{\L_k}\rangle\in c'_{\L}\mid \L_k\prec\L\}$.} Note that $\vert\dot c'_{\L}\vert=\widehat M_{\L}$.

For $n\ge 0$, we denote by $\bar s_n$ the complex composed by the standard $n$-simplex $s_n$ and all its faces and by $\dot s_n$ its boundary complex. Therefore $\bar s'_n$ and $\dot s'_n$ denote their barycentric subdivisions, respectively. Observe that $\vert\dot s_n\vert=\vert\dot s'_n\vert=S^{n-1}$, for any $n\ge 0$. In the following we set, as usual,  $A\star\emptyset=A$, for any simplicial complex $A$.

\begin{prop} \label{link} Let $\s^h=\langle V_{\L_0},V_{\L_1},\ldots, V_{\L_h}\rangle$ be an $h$-simplex of $\C'_{\G}$. If $\L_i$ is a $d_i$-residue of $\G$, for $i=0,1,\ldots,h$, then $\textrm{Link}\,(\s^h,\C'_{\G})$ is isomorphic to the complex $\dot c'_{\L_0}\star \dot s'_{d_1-d_0-1}\star\cdots\star\dot s'_{d_h-d_{h-1}-1}\star \dot s'_{n-d_h-1}$. Hence, $\vert\textrm{Link}\,(\s^h,\C'_{\G})\vert$ is homeomorphic to $\widehat M_{\L_0}\star S^{n-h-d_0-1}$.
\end{prop}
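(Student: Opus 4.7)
The plan is to exploit the identification, recalled just before the statement, of $\C'_\G$ with the order complex of the poset $\R(\G)$. For a chain $p_0\prec\cdots\prec p_h$ in any finite poset $P$, it is a standard fact (cf.\ \cite{[Bjo]}) that the link of the corresponding simplex in the order complex of $P$ is the join of the order complexes of $P_{<p_0}$, of the open intervals $(p_{i-1},p_i)$ for $i=1,\ldots,h$, and of $P_{>p_h}$. I would apply this to $P=\R(\G)$ and to the chain $\L_0\prec\cdots\prec\L_h$; the task then reduces to identifying each of the $h+2$ factors.

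The key combinatorial input is the following: for any two residues $\L'\prec\L''$ of $\G$ with color sets $\Delta'\subsetneq\Delta''$, each residue $\T$ with $\L'\prec\T\prec\L''$ is uniquely determined by its color set $\Theta$. Indeed, one must have $\Delta'\subsetneq\Theta\subsetneq\Delta''$, and $\T$ is the connected component of $\G_\Theta$ that contains $\L'$; conversely, every such $\Theta$ produces exactly one such $\T$. Applying this with $\L'=\L_{i-1}$, $\L''=\L_i$, the open interval $(\L_{i-1},\L_i)$ becomes order-isomorphic to the Boolean lattice of proper nonempty subsets of the $(d_i-d_{i-1})$-element set $\Delta_i\setminus\Delta_{i-1}$, whose order complex is classically $\dot s'_{d_i-d_{i-1}-1}$ (the barycentric subdivision of the boundary of a $(d_i-d_{i-1}-1)$-simplex). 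The same reasoning, with the total color set $\D$ playing the role of a formal top, identifies $\R(\G)_{>\L_h}$ with the analogous Boolean lattice on $\D\setminus\Delta_h$, producing the last sphere-like factor in the stated join; and $\R(\G)_{<\L_0}$ has order complex $\dot c'_{\L_0}$ by the very construction of the barycentric subdivision of the cell $c_{\L_0}$.

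Putting these pieces together yields the stated join decomposition of the link. The homeomorphism statement then follows from $|\dot c'_{\L_0}|\cong\widehat M_{\L_0}$, $|\dot s'_k|\cong S^{k-1}$, and iterated use of $S^a\star S^b\cong S^{a+b+1}$: the dimensions of the sphere factors telescope to give a single sphere of dimension $n-h-d_0-1$ joined with $\widehat M_{\L_0}$. The main obstacle is the Boolean-lattice identification of the open intervals and of $\R(\G)_{>\L_h}$, which rests on the uniqueness of a residue with a prescribed color set containing a fixed subresidue; this is immediate from the fact that residues are connected components of the relevant color-restricted subgraph, after which the remainder of the argument is a routine dimension count.
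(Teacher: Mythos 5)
Your proof is correct and follows essentially the same route as the paper's: both treat $\C'_{\G}$ as the order complex of $\R(\G)$, decompose $\textrm{Link}(\s^h,\C'_{\G})$ as the join of the order complexes of the complementary intervals of the chain, and identify each open interval with the poset of proper nonempty subsets of the corresponding color-difference set. The only difference is cosmetic — you invoke the standard link-of-a-chain formula for order complexes, whereas the paper verifies that decomposition directly by listing the simplices of the link — and your dimension count correctly reproduces the final homeomorphism $\widehat M_{\L_0}\star S^{n-h-d_0-1}$.
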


\begin{proof} For $i=1,\ldots,h$ suppose $\L_i$ is a $D_i$-residue of $\G$, with $|D_i|=d_i$, and set $D_{h+1}=\D$.
A simplex $\tau^s=\langle V_{\O_0},V_{\O_1},\ldots, V_{\O_s}\rangle$ belongs to $\textrm{Link}\,(\s^h,\C'_{\G})$ if there exist $j_0,j_1,\ldots,j_h$ with  $0\le j_0\le j_1\le\ldots\le j_h< n$, such that $\O_0\prec \ldots\prec \O_{j_0}\prec \L_0$, $\L_0\prec\O_{j_0+1}\prec \ldots\prec \O_{j_1}\prec \L_1$, $\ldots$, $\L_{h-1}\prec\O_{j_{h-1}+1}\prec \ldots\prec \O_{j_h}\prec \L_h$, $\L_{h}\prec\O_{j_{h}+1}\prec \ldots\prec \O_{s}$. Of course, the simplex $\tau_0=\langle V_{\O_0},\ldots, V_{\O_{j_0}}\rangle$ is a generic element of $\dot c'_{\L_0}$. 
On the other hand, for $i=1,\ldots,h+1$ the chain $\O_{j_{i-1}+1}\prec\ldots\prec \O_{j_i}$ is a generic element of the order complex of the subposet $R_i$ of $\R(\G)$ defined by $R_i=\{\O\in\R(\G)\mid\L_{i-1}\prec\O\prec\L_{i}\}$. It is easy to see that the poset $R_i$ is isomorphic to the poset of the proper subsets of $D_i-D_{i-1}$, via the correspondence $\O\mapsto D'-D_{i-1}$, where $D'$ is the set of colors of $\O$. 
Since the order complex of the proper subsets of a finite set $X$ is isomorphic to $\dot s'_{\vert X\vert-1}$, we obtain the proof.
\end{proof}

\begin{cor} \label{ferri} Let $\G$ be an $(n+1)$-colored graph, then $\widehat M_{\G}$ is a closed manifold if and only if all $n$-residues of $\G$ are ordinary.
\end{cor}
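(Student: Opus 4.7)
The strategy is to combine Proposition~\ref{link} with the standard PL criterion that a polyhedron of the form $|\C'_\G| = \widehat M_\G$ is a closed $n$-manifold if and only if the link of every vertex of the simplicial complex $\C'_\G$ is a PL $(n-1)$-sphere. Applying Proposition~\ref{link} with $h=0$ to a vertex $V_\L$ where $\L$ is a $d$-residue identifies $|\textrm{Link}(V_\L,\C'_\G)| \cong \widehat M_\L \star S^{n-d-1}$, and the join identity $S^a \star S^b \cong S^{a+b+1}$ (together with the convention $S^{-1}=\emptyset$) shows that this link is an $(n-1)$-sphere as soon as $\widehat M_\L \cong S^{d-1}$. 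In particular, the forward direction is immediate: if $\widehat M_\G$ is a closed $n$-manifold and $\L$ is an $n$-residue, the link of $V_\L$ collapses to $\widehat M_\L \star S^{-1} = \widehat M_\L$, which must then be an $(n-1)$-sphere, so $\L$ is ordinary.

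For the converse, assume every $n$-residue of $\G$ is ordinary. I plan to prove by downward induction on $d$, running from $d=n$ down to $d=2$, that every $d$-residue of $\G$ is ordinary. The base case $d=n$ is the hypothesis. For the inductive step, let $\L$ be a $d$-residue with $d<n$ and enlarge its color set by any single extra color to produce the unique $(d+1)$-residue $\L^+$ of $\G$ containing $\L$. By the inductive hypothesis, $\L^+$ is ordinary, so $\widehat M_{\L^+} \cong S^d$. Now invoke Proposition~\ref{link} intrinsically inside the $(d+1)$-colored graph $\L^+$ (playing the role of the ambient graph, with ambient dimension $d$): the link of $V_\L$ in $\C'_{\L^+}$ is homeomorphic to $\widehat M_\L \star S^{-1} = \widehat M_\L$. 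Since $\C'_{\L^+}$ triangulates the PL $d$-sphere $\widehat M_{\L^+}$, every vertex link of this subdivision is a PL $(d-1)$-sphere, so $\widehat M_\L \cong S^{d-1}$, i.e., $\L$ is ordinary.

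Once every essential residue is known to be ordinary (with $0$- and $1$-residues trivially so), the link formula of the first paragraph yields $|\textrm{Link}(V_\L,\C'_\G)| \cong S^{d-1} \star S^{n-d-1} \cong S^{n-1}$ for every vertex $V_\L$ of $\C'_\G$, and the PL criterion gives that $\widehat M_\G$ is a closed $n$-manifold. The main conceptual step is the downward induction: one must observe that Proposition~\ref{link} can legitimately be applied with $\L^+$ in place of $\G$, so that its link computation at $V_\L$ yields information about the same space $\widehat M_\L$ that appears inside $\widehat M_\G$. This is justified by the fact that both $\widehat M_\L$ and $\C'_\L$ are intrinsic to the residue $\L$, independent of the surrounding graph, which is precisely what allows the sphere structure of $\widehat M_{\L^+}$ to propagate ordinariness downward through all essential residues.
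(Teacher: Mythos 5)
Your proof is correct and follows essentially the same route as the paper: the paper's proof of Corollary~\ref{ferri} simply invokes the criterion that $\widehat M_{\G}$ is a closed manifold iff every vertex link of $\C'_{\G}$ is an $(n-1)$-sphere and says the result "immediately follows" from Proposition~\ref{link}. Your downward induction on residues merely spells out the propagation of ordinariness that the paper leaves implicit (and records just afterwards as "all residues of an ordinary residue are ordinary"), so it is a more detailed write-up of the same argument rather than a different one.
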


\begin{proof} Since $\widehat M_{\G}$ is a closed manifold if and only if the link of any 0-simplex of $\C'_{\G}$ is an $(n-1)$-sphere, the result immediately follows from Proposition~\ref{link}.\footnote{Notice that a proof of Corollary~\ref{ferri} is given in \cite{[Fe]} using the dual construction described later on.}
\end{proof}

\medskip

As a consequence, all residues of a ordinary residue are ordinary.

The singular complex of $\C'_{\G}$ is denoted by $\S_{\G}$, and therefore $\vert\S_{\G}\vert$ indicates the singular set of $\widehat M_{\G}$. By Proposition~\ref{link}, an $h$-simplex $\s^h=\langle V_{\L_0},V_{\L_1},\ldots,V_{\L_h}\rangle$ of $\C'_{\G}$ belongs to $\S_{\G}$ if and only if $\L_0$ is a singular residue of $\G$ (and therefore any $\L_i$ is a singular residue, for $i=1,\ldots,h$). As mentioned before, the set $\vert\S_{\G}\vert$ is empty when $n=2$ and finite when $n=3$.

The set of ordinary (resp. singular) residues of $\G$ will be denoted by $\R'(\G)$ (resp. $\R''(\G)$) and let $\R'_h(\G)=\R_h(\G)\cap \R'(\G)$ (resp. $\R''_h(\G)=\R_h(\G)\cap \R''(\G)$), for any $0\le h\le n$. Of course, $\R''_h(\G)=\emptyset$ for all $h\le 2$. 
If $S$ is a connected component of $\vert \S_{\G}\vert$ then define $R_S\subseteq \R''_n(\G)$ by setting $R_S=\{\L\in\R''_n(\G)\mid V_{\L}\in S\}$. As a consequence, $S$ is a single point if and only if $\vert R_S\vert=1$. It is easy to see that $\dim(S)\le\vert R_S\vert-1$.

\begin{lemma} \label{full} Let $\G$ be an $(n+1)$-colored graph. If $\S_{\G}$ is non-empty then it is a full subcomplex of $\C'_{\G}$ and $\dim(\S_{\G})=n-\min\{h\mid \R''_h(\G)\neq\emptyset\}\le n-3.$
\end{lemma}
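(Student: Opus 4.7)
The plan has two separate parts, corresponding to the two assertions, and both rely on the preceding characterization that $\s^h=\langle V_{\L_0},\ldots,V_{\L_h}\rangle\in\S_{\G}$ exactly when $\L_0$ is a singular residue, together with the observation (noted just after Corollary~\ref{ferri}) that every residue of an ordinary residue is ordinary.

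First I would prove fullness. Suppose $\s^h=\langle V_{\L_0},\ldots,V_{\L_h}\rangle$ is a simplex of $\C'_{\G}$ all of whose vertices lie in $\S_{\G}$. By convention the chain satisfies $\L_0\prec\L_1\prec\cdots\prec\L_h$, so $V_{\L_0}$ is well-defined as the vertex corresponding to the minimal residue of the chain. Being a vertex of $\S_{\G}$, $\L_0$ is singular, and by the characterization recalled above this already forces $\s^h\in\S_{\G}$. Hence $\S_{\G}$ is a full subcomplex.

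For the dimension equality, set $m=\min\{h\mid\R''_h(\G)\neq\emptyset\}$. Note first that $m\ge 3$, since all $0$-, $1$- and $2$-residues are ordinary, which gives the inequality $\dim(\S_{\G})\le n-3$ once the main equality is established. To get the upper bound $\dim(\S_{\G})\le n-m$, take any $\s^h=\langle V_{\L_0},\ldots,V_{\L_h}\rangle\in\S_{\G}$ and let $D_i$ be the color set of $\L_i$. Since $\L_i\prec\L_{i+1}$ and distinct residues sharing the same color set must be disjoint components of the same subgraph, the inclusion $\L_i\subsetneq\L_{i+1}$ forces $D_i\subsetneq D_{i+1}$, so $|D_0|<|D_1|<\cdots<|D_h|$ is a strictly increasing sequence of integers. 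Singularity of $\L_0$ gives $|D_0|\ge m$, and $|D_h|\le n$ since $\R(\G)=\bigcup_{0\le h\le n}\R_h(\G)$, yielding $h\le n-m$.

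For the reverse inequality I would construct an explicit chain. Pick any $\L_0\in\R''_m(\G)$, and inductively define $\L_{i+1}$ as the unique residue containing $\L_0$ whose color set is obtained from $D_i$ by adjoining one new color of $\D$ — this exists and is unique as the connected component of the appropriate subgraph containing $\L_i$. This yields a chain $\L_0\prec\L_1\prec\cdots\prec\L_{n-m}$ with $|D_{n-m}|=n$. The contrapositive of ``every residue of an ordinary residue is ordinary'' tells us that whenever $\L_0$ is singular and $\L_0\prec\L$, the residue $\L$ must also be singular; so all $\L_i$ are singular and the chain produces an $(n-m)$-simplex of $\S_{\G}$. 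Combined with the upper bound this gives $\dim(\S_{\G})=n-m\le n-3$. The only step that requires any care is the strictness $D_i\subsetneq D_{i+1}$, which as noted comes from the fact that two residues with the same color set are either equal or disjoint.
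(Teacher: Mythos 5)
Your proof is correct and follows essentially the same route as the paper's: fullness is obtained from the characterization of simplexes of $\S_{\G}$ by the singularity of the minimal residue $\L_0$ of the chain, and the dimension is computed by bounding chain lengths via the strictly increasing color sets together with an explicit maximal chain built above a singular residue of minimal size $m$. If anything, you are more explicit than the paper about the upper bound $\dim(\S_{\G})\le n-m$, which the paper's proof only spells out in the form $\dim(\S_{\G})\le n-3$.
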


\begin{proof} Let $\s^k=\langle V_{\L_0},V_{\L_1},\ldots,V_{\L_k}\rangle$ be a simplex of $\C'_{\G}$ with all vertices belonging to $\S_{\G}$. As a consequence, $\L_i$ is a singular residue for all $i=0,1,\ldots,k$ and therefore $\s^k$ is a simplex of $\S_{\G}$. This prove that $\S_{\G}$ is a full subcomplex of $\C'_{\G}$. Since all 2-residues are ordinary, any simplex of $\S_{\G}$ has dimension $<n-2$. Moreover, if $\L$ is a singular $n$-residue, and $(\L_0=\L,\L_1,\ldots,\L_{n-h})$ is a maximal chain in $\R(\G)$ then $\langle V_{\L_0},V_{\L_1},\ldots,V_{\L_{n-h}}\rangle$ is an $(n-h)$-simplex of $\S_{\G}$. This concludes the proof.
\end{proof}

\medskip

The above construction of $\widehat M_{\G}$ is dual to the one given by Pezzana in the seventies (see for example the survey paper \cite{[FGG]}), which associates an $n$-dimensional pseudo-simplicial complex $\K_{\G}$ to the $(n+1)$-colored graph $\G$ in the following way:

\begin{itemize}
\item[•] (i) take an $n$-simplex $s_v$ for each $v\in V(\G)$ and color its vertices injectively by $\D$;
\item[•] (ii) if $v,w\in V(\G)$ are joined by a $c$-edge of $\G$, glue the $(n-1)$-faces of $s_{v}$ and $s_{w}$ opposite to the vertices colored by $c$, in such a way that equally colored vertices are identified together.
\end{itemize}

As a consequence, $\K_{\G}$ inherits a coloration on its vertices, thus becoming a balanced\footnote{An $n$-dimensional pseudo-simplicial complex is called {\it balanced} if its vertices are labelled by a set of $n+1$ colors in such a way that any 1-simplex has vertices labelled with different colors.} pseudo-simplicial complex.
This construction yields a one-to-one inclusion reversing correspondence $\L\leftrightarrow s_{\L}$ between the residues of $\G$ and the simplexes of $\K_{\G}$, in such a way that the $h$-simplex $s_{\L}$ of $\K_{\G}$ having vertices colored by $\Delta\subset\D$ is associated to the $\widehat \Delta$-residue of $\G$ having vertices corresponding via (i) to the $n$-simplexes of $\K_{\G}$ containing $s_{\L}$. 

\begin{prop} The simplicial complexes $\C'_{\G}$ and $\K'_{\G}$ are isomorphic. Therefore \hbox{$\vert \K_{\G}\vert\cong\vert \C_{\G}\vert=\widehat M_{\G}$.} 
\end{prop}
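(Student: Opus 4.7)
The plan is to exhibit an explicit simplicial isomorphism $\Phi:\C'_{\G}\to \K'_{\G}$ by showing that both complexes are canonically identified with the order complex of the poset $\R(\G)$ (up to reversing the order in the case of $\K'_{\G}$). The paper has already observed that $\C'_{\G}$ is isomorphic to the order complex of $\R(\G)$ via the correspondence $V_{\L}\mapsto \L$, so the remaining task is to do the analogous identification for $\K'_{\G}$ and then compose.

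First I would recall that the barycentric subdivision of any (pseudo-)simplicial complex $\K$ is, as an abstract simplicial complex, the order complex of the face poset of $\K$: vertices of $\K'$ correspond to simplexes of $\K$, and $\langle \sigma_0,\ldots,\sigma_h\rangle$ is a simplex of $\K'$ iff $\sigma_0,\ldots,\sigma_h$ form a chain in the face poset. Applying this to $\K_{\G}$, I would invoke the inclusion-reversing bijection $\L\leftrightarrow s_{\L}$ between $\R(\G)$ and the set of simplexes of $\K_{\G}$ stated just before the proposition: this bijection sends a chain $\L_0\prec\L_1\prec\cdots\prec\L_h$ in $\R(\G)$ precisely to a chain $s_{\L_0}>s_{\L_1}>\cdots>s_{\L_h}$ of simplexes of $\K_{\G}$, and conversely. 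Hence $\K'_{\G}$ is isomorphic to the order complex of $\R(\G)$ via $s_{\L}\mapsto \L$.

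Composing with the already noted isomorphism between $\C'_{\G}$ and the order complex of $\R(\G)$, I would define $\Phi$ on vertices by $V_{\L}\mapsto s_{\L}$ and check that $\langle V_{\L_0},\ldots,V_{\L_h}\rangle$ is a simplex of $\C'_{\G}$ (i.e., $\L_0\prec\L_1\prec\cdots\prec\L_h$) if and only if $\langle s_{\L_0},\ldots,s_{\L_h}\rangle$ is a simplex of $\K'_{\G}$ (i.e., $s_{\L_{h}}< s_{\L_{h-1}}<\cdots< s_{\L_0}$). This equivalence is immediate from the inclusion-reversing character of the bijection $\L\leftrightarrow s_{\L}$, so $\Phi$ is a simplicial isomorphism. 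The last assertion $|\K_{\G}|\cong |\C_{\G}|$ then follows because a simplicial complex and its barycentric subdivision have canonically homeomorphic underlying spaces.

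The only point requiring mild care is the identification of $\K'_{\G}$ with the order complex of the face poset of $\K_{\G}$: since $\K_{\G}$ is a pseudo-simplicial complex and not an abstract simplicial complex, one has to check that the two possible simplexes sharing the same vertex set in $\K_{\G}$ do not interfere at the level of the barycentric subdivision. This is exactly the content of the statement, recalled in Section~2, that the first barycentric subdivision of a pseudo-simplicial complex is a genuine abstract simplicial complex whose simplexes are the chains in the face poset; so this is the step where I would be most careful, but no new argument is needed beyond citing that fact.
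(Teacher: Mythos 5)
Your proposal is correct and follows essentially the same route as the paper: the paper's proof is exactly the vertex bijection $\widehat{s_{\L}}\mapsto V_{\L}$, which induces the isomorphism because both barycentric subdivisions are order complexes of the (mutually anti-isomorphic) posets involved. Your version merely spells out the chain-reversal argument and the pseudo-simplicial caveat that the paper leaves implicit.
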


\begin{proof} If we denote by $\widehat s$ the barycenter of the simplex $s$ of $\K_{\G}$, then the bijective map $d: S^0(\K'_{\G})\to S^0(\C'_{\G})$ between the 0-skeletons of the two complexes defined by $d(\widehat{s_{\L}})=V_{\L}$, for any $\L\in\R(\G)$, induces an isomorphism between the simplicial complexes $\K'_{\G}$ and $\C'_{\G}$.
\end{proof}

\medskip

\begin{rem}
The duality between the complexes $\C_{\G}$ and $\K_{\G}$ is given by the fact that $\C_{\G}$ is the complex $\K_{\G}^*$ dual to $\K_{\G}$ (i.e., composed by the dual cells of the simplexes of $\K_{\G}$). The definition of dual complex is analogous to the one in the simplicial case (see for example page 29 of \cite{[Hu]}), and it is well defined also in the pseudo-simplicial case since the barycentric subdivision $\K'_{\G}$ of $\K_{\G}$ is a simplicial complex: if $s_{\L}$ is an $h$-simplex of $\K_{\G}$ with vertex set $V$, then its {\it dual cell} $s_{\L}^*$ is the $(n-h)$-dimensional subcomplex of $\K'_{\G}$ given by
$s_{\L}^*=\cap_{v\in V}{\rm star}(v,\K'_{\G}).$ Therefore the cone-complex is exactly the complex $\C_{\G}=\{|s_{\L}^*|\mid s_{\L}\in \K_{\G}\}.$ 
Moreover, the singular set of $\widehat M_{\G}$ is also the underlying space of the subcomplex $\bar \S_{\G}$ of $\K_{\G}$ composed by the simplexes $s_{\L}$ such that $\L$ is singular. It is easy to see that $\S_{\G}=\bar\S'_{\G}$. Note that maximal simplexes of $\bar \S_{\G}$ correspond to minimal singular residues of $\G$ (i.e., singular residues having no singular subresidues). 
\end{rem}

It is not difficult to see that $\C'_{\G}=\K'_{\G}$ is an $n$-dimensional quasi-manifold. Viceversa, any $n$-dimensional quasi-manifold admits a representation by $(n+1)$-colored graphs:

\begin{prop} \cite{[G]} Let $K$ be an $n$-dimensional (pseudo-)simplicial complex. Then there exists an $(n+1)$-colored graph $\G$ such that $\widehat M_{\G}\cong \vert K\vert$ if and only if $K$ is a quasi-manifold. 
\end{prop}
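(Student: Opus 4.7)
The proposition has two directions; I would handle the ``only if'' direction by piggy-backing on facts already established, and focus effort on the ``if'' direction.

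For the implication $\Rightarrow$, I would appeal to the sentence immediately preceding the proposition: it is already noted that $\C'_{\G}=\K'_{\G}$ is an $n$-dimensional quasi-manifold for every $(n+1)$-colored graph $\G$. Since $|K|\cong\widehat M_{\G}=|\C_{\G}|=|\C'_{\G}|$ and the quasi-manifold property is by Section~2 a property of the underlying polyhedron (one transfers through barycentric subdivisions without altering $|K|$), $K$ itself is a quasi-manifold.

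For the implication $\Leftarrow$, the plan is to manufacture $\G$ from a balanced triangulation of $|K|$ and then invoke the preceding proposition to identify $\widehat M_{\G}$ with $|K|$. First I would pass to the first barycentric subdivision $K'$, which is a genuine simplicial complex with $|K'|=|K|$, hence still a quasi-manifold, and which carries a canonical balanced $(n+1)$-vertex-coloring: color the barycenter $\widehat\sigma$ of each simplex $\sigma$ of $K$ by $\dim\sigma\in\D$, so each $n$-simplex of $K'$ has one vertex of each color. Then I would invert the Pezzana recipe (i)–(ii) recalled just before this proposition: take one vertex $v_{\tau}$ of $\G$ for every $n$-simplex $\tau$ of $K'$, and for each $c\in\D$ place a $c$-edge between $v_{\tau}$ and $v_{\tau'}$ iff $\tau$ and $\tau'$ share the $(n-1)$-face obtained by removing their color-$c$ vertex. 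The four graph-theoretic axioms are verified as follows: $(n+1)$-regularity and the existence of a unique $c$-partner follow from the pseudomanifold condition (ii) applied to $K'$; absence of loops holds because $K'$ is a \emph{bona fide} simplicial complex, so distinct $(n-1)$-faces of an $n$-simplex cannot be identified; the coloration is proper since the $n+1$ codimension-one faces of a fixed $n$-simplex are pairwise distinct; and connectedness of $\G$ translates exactly into the pseudomanifold condition (iii) on $K'$.

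It remains to show $\widehat M_{\G}\cong|K|$. By construction the Pezzana complex $\K_{\G}$ associated to the graph $\G$ just produced coincides, via the correspondence $v_{\tau}\leftrightarrow s_{v_{\tau}}$ and the identification of glued $(n-1)$-faces, with the balanced pseudo-simplicial complex $K'$; hence $|\K_{\G}|=|K'|=|K|$, and by the proposition just proven $|\C_{\G}|\cong|\K_{\G}|$, giving $\widehat M_{\G}\cong|K|$. The main obstacle I anticipate lies in this last direction: one must be careful that the barycentric-subdivision move really produces a balanced structure whose reverse Pezzana construction is well defined (no loops, no multiply-identified faces), and that the quasi-manifold hypothesis — specifically the strong connectedness of the stars of all simplices — is indeed the exact combinatorial condition needed to conclude connectedness of $\G$ and, via Proposition~\ref{link}, the compatibility of residue structure with the local topology of $K$.
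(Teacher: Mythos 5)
First, note that the paper itself offers no proof of this statement: it is quoted directly from the reference [G], so there is no internal argument to compare yours against. Your overall strategy --- pass to the barycentric subdivision $K'$ with its canonical balanced coloring by dimension, form the dual colored graph $\G$, and invoke the duality $\vert\K_{\G}\vert\cong\vert\C_{\G}\vert$ --- is the natural one and is essentially the route of the cited literature, and your verification of the four graph axioms for $\G$ is correct.

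The genuine gap is the sentence ``By construction the Pezzana complex $\K_{\G}$ \dots\ coincides \dots\ with $K'$.'' This is precisely the point where the quasi-manifold hypothesis must do its work, and it is not true ``by construction'': for a general pseudomanifold the dual graph $\G$ is perfectly well defined (everything you check uses only the pseudomanifold conditions (i)--(iii)), yet $\K_{\G}$ need not equal $K'$. The Pezzana gluing identifies simplexes only through chains of codimension-one adjacencies, so the simplexes of $\K_{\G}$ with vertex colors $\Delta$ are in bijection with the $\widehat\Delta$-residues of $\G$; on the other hand a simplex $\sigma$ of $K'$ with those colors corresponds to the set of $n$-simplexes of $K'$ containing $\sigma$, which is a \emph{union} of $\widehat\Delta$-residues and is a single residue exactly when the star of $\sigma$ is strongly connected. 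If some star fails this, $\K_{\G}$ splits $\sigma$ into several simplexes and the natural surjection $\K_{\G}\to K'$ is not an isomorphism. So the missing argument is: $K$ quasi-manifold $\Rightarrow$ every star in $K'$ is strongly connected $\Rightarrow$ each such set of $n$-simplexes is one residue $\Rightarrow$ $\K_{\G}\cong K'$. You gesture at this in your final paragraph but misattribute the role of the quasi-manifold condition to connectedness of $\G$, which only requires pseudomanifold condition (iii). Two smaller unproved points: that the quasi-manifold property is an invariant of the polyhedron $\vert K\vert$ (needed in your ``only if'' direction; the paper asserts such transfer only for pseudomanifolds), and that it is preserved under barycentric subdivision (needed to know $K'$ is still a quasi-manifold).
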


\medskip

The following result is straightforward:

\begin{lemma} \label{singular manifold} Let $\G$ be an $(n+1)$-colored graph, then $\widehat M_{\G}$ is a singular manifold if and only if $\R''_h(\G)=\emptyset$ for any $h<n$.
\end{lemma}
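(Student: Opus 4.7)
The plan is to use the characterization mentioned in Section~2 — namely, that among quasi-manifolds, singular $n$-manifolds are exactly those whose singular set is a (possibly empty) finite set of points. Since $\C'_{\G}$ is already known to be an $n$-dimensional quasi-manifold, all that remains is to analyze the dimension and geometry of $|\S_{\G}|$.

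For the forward direction I would argue purely via the singular set. If $\widehat M_{\G}$ is a singular manifold, then $\dim(\S_{\G})\le 0$. By Lemma~\ref{full}, either $\S_{\G}$ is empty (and every residue of $\G$ is ordinary, so the conclusion is trivial) or
$$\dim(\S_{\G})=n-\min\{h\mid \R''_h(\G)\neq\emptyset\}\le 0,$$
which forces $\min\{h\mid \R''_h(\G)\neq\emptyset\}\ge n$. Either way, $\R''_h(\G)=\emptyset$ for every $h<n$.

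For the converse, assume $\R''_h(\G)=\emptyset$ for all $h<n$ and fix a 0-simplex $V_{\L_0}$ of $\C'_{\G}$ corresponding to a $d_0$-residue $\L_0$. By Proposition~\ref{link},
$$|\textrm{Link}(V_{\L_0},\C'_{\G})|\cong \widehat M_{\L_0}\star S^{n-d_0-1}.$$
If $\L_0$ is ordinary, then $\widehat M_{\L_0}\cong S^{d_0-1}$ and the link is $S^{n-1}$. If $\L_0$ is singular, the standing hypothesis forces $d_0=n$, so the link reduces to $\widehat M_{\L_0}\star S^{-1}=\widehat M_{\L_0}$. Viewing $\L_0$ autonomously as an $n$-colored graph, its proper residues are exactly the residues of $\G$ strictly contained in $\L_0$ (as pointed out in Section~2), all of which have fewer than $n$ colors and are therefore ordinary by assumption. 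Applying Corollary~\ref{ferri} to $\L_0$ then shows that $\widehat M_{\L_0}$ is a closed manifold, and it is connected because $\L_0$ itself is, by definition of residue. Hence the link of every 0-simplex of $\C'_{\G}$ is a closed connected $(n-1)$-manifold, which is precisely the defining condition of a singular $n$-manifold.

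The only subtle point is the bookkeeping identification of residues of $\L_0$, treated as an $n$-colored graph in its own right, with the residues of $\G$ contained in $\L_0$; since this compatibility is recorded explicitly in Section~2, the proof ultimately reduces to a direct combination of Proposition~\ref{link}, Corollary~\ref{ferri}, and Lemma~\ref{full}.
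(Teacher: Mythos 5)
Your proof is correct and follows essentially the same route as the paper: the forward direction is the identical appeal to Lemma~\ref{full}, and the converse is the paper's argument via Proposition~\ref{link} with the closedness of $\widehat M_{\L_0}$ for singular $n$-residues spelled out explicitly through Corollary~\ref{ferri}. The extra bookkeeping you supply (the link computation case split and the identification of residues of $\L_0$ with residues of $\G$) is exactly what the paper leaves implicit.
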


\begin{proof} If $\widehat M_{\G}$ is a singular manifold then $\dim(\S_{\G})\le 0$ and therefore, by Lemma~\ref{full}, $\R''_h(\G)=\emptyset$ for any $h<n$. On the contrary,  if $\R''_h(\G)=\emptyset$ for any $h<n$ then $\widehat M_{\G}$ is a singular manifold by Proposition~\ref{link}, since $\widehat M_{\L_0}$ is a closed manifold when $\L_0$ is an $n$-residue of $\G$.
\end{proof}

\subsection{\hskip -0.7cm . The manifold $M_{\G}$}\label{manifold}

As previously noticed, $\widehat M_{\G}$ is not always a manifold since it may contain singular points. In order to obtain a manifold we remove the interior of a regular neighborhood of the singular set $|\S_{\G}|$ in $\widehat M_{\G}$.

\begin{lemma} Let $N(\S_{\G})$ be a regular neighborhood of $\vert\S_{\G}\vert$ in $\widehat M_{\G}$, then $\widehat M_{\G}-\textrm{int}(N(\S_{\G}))$ is a compact $n$-manifold with (possibly empty) boundary.
\end{lemma}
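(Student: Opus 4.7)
My plan is to apply standard PL regular neighborhood theory, leveraging two structural facts already in hand: by Lemma~\ref{full}, $\S_\G$ is a full subcomplex of $\C'_\G$, and by the very definition of the singular complex, the open set $\widehat M_\G - |\S_\G|$ is a PL $n$-manifold without boundary. I would take $N(\S_\G)$ to be a \emph{derived} regular neighborhood of $|\S_\G|$, constructed for instance as the simplicial neighborhood of $|\S_\G|$ in the second barycentric subdivision of $\C'_\G$. Fullness guarantees that $|\S_\G|$ lies in the PL-interior of $N(\S_\G)$ and that $N(\S_\G)$ collapses to $|\S_\G|$, so the complement $W := \widehat M_\G - \textrm{int}(N(\S_\G))$ is a compact subpolyhedron contained entirely in the manifold $\widehat M_\G - |\S_\G|$. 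Compactness is then immediate, $W$ being closed in the compact polyhedron $\widehat M_\G$.

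The next step is to give $W$ the structure of a PL $n$-manifold with boundary $\partial N(\S_\G)$. Points in the open set $\widehat M_\G - N(\S_\G) \subset W$ automatically have Euclidean charts, inherited from the ambient manifold $\widehat M_\G - |\S_\G|$. For a point $p$ on the frontier $\partial N(\S_\G)$, I would invoke the standard PL fact that a derived regular neighborhood is bicollared: there is a PL embedding $\partial N(\S_\G) \times [-1,1] \hookrightarrow \widehat M_\G - |\S_\G|$ whose image is a bicollar of $\partial N(\S_\G)$, with one side inside $N(\S_\G)$ and the other inside $W$. The second side supplies $p$ with a half-space chart in $W$, making $W$ a PL $n$-manifold with boundary $\partial N(\S_\G)$.

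The main obstacle is the verification that $\partial N(\S_\G)$ is itself a closed PL $(n-1)$-manifold, which is the hypothesis the bicollar theorem requires. I would establish this using Proposition~\ref{link}: any point of $\partial N(\S_\G)$ lies in the interior of a unique simplex $\sigma^h = \langle V_{\L_0},\ldots,V_{\L_h}\rangle$ of $\C'_\G$, and since this interior lies in $\widehat M_\G - |\S_\G|$ while $\S_\G$ is a subcomplex, the residue $\L_0$ must be ordinary. Proposition~\ref{link} then identifies $\textrm{Link}(\sigma^h, \C'_\G)$ with a join in which the factor $\widehat M_{\L_0}$ is a genuine sphere (by ordinariness of $\L_0$), producing an $(n-h-1)$-sphere overall. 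The derived neighborhood construction splits this sphere along an $(n-h-2)$-sphere sitting on $\partial N(\S_\G)$; fullness of $\S_\G$ is the key input ensuring that this splitting is clean, in particular that $\partial N(\S_\G)$ meets each containing simplex of $\C'_\G$ in a PL disk. Patching these local models gives $\partial N(\S_\G)$ the PL $(n-1)$-manifold structure needed to complete the argument.
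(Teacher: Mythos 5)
Your overall strategy---take a derived regular neighborhood of the full subcomplex $\S_{\G}$ and show that its frontier is a bicollared closed $(n-1)$-manifold sitting inside the open manifold $\widehat M_{\G}-\vert\S_{\G}\vert$---is exactly the strategy of the paper, which however disposes of the entire matter by citing Theorem~5.3 of Cohen's paper \cite{[Co1]} on \emph{relative} regular neighborhoods. The reason the paper needs Cohen rather than the ordinary regular neighborhood theorem is precisely the point where your argument has a gap: the ``standard PL fact that a derived regular neighborhood is bicollared'' is a statement about regular neighborhoods of a polyhedron \emph{in a PL manifold}, and $\widehat M_{\G}$ fails to be a manifold exactly along $\vert\S_{\G}\vert$---that failure is the whole reason the lemma requires proof. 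So that fact cannot be invoked as stated. Your attempted repair---first verify that the frontier is a closed PL $(n-1)$-manifold, then feed it to ``the bicollar theorem''---does not close the gap either: a closed codimension-one PL submanifold of a PL manifold is not automatically bicollared (one needs two-sidedness \emph{and} local flatness, and local flatness in codimension one is not free in the PL category---it is essentially the PL Schoenflies problem), and the step that would actually do the work, namely that the derived neighborhood ``splits the link sphere cleanly'' along an $(n-h-2)$-sphere, is asserted rather than proved; it is the very content of the statement being established.

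The correct way to finish along your lines is to exploit the join structure that fullness provides globally rather than simplex by simplex. Let $f:\vert\C'_{\G}\vert\to[0,1]$ be the simplicial map sending the vertices of $\S_{\G}$ to $1$ and all other vertices to $0$. Fullness (your Lemma~\ref{full}) gives $f^{-1}(1)=\vert\S_{\G}\vert$ and guarantees that every simplex of $\C'_{\G}$ meeting both $\vert\S_{\G}\vert$ and its complement is the join of its faces in $f^{-1}(1)$ and $f^{-1}(0)$, with $f$ the join parameter; hence $f^{-1}([a,b])\cong f^{-1}(1/2)\times[a,b]$ for all $0<a\le b<1$. Since $f^{-1}((0,1))$ is an open subset of the manifold $\widehat M_{\G}-\vert\S_{\G}\vert$ (this is where your correct observation enters, that the minimal residue $\L_0$ in the carrier of any frontier point is ordinary, so Proposition~\ref{link} gives spherical links there), the product structure shows \emph{simultaneously} that $f^{-1}(1/2)=\dot N(\S_{\G})$ is a closed PL $(n-1)$-manifold and that it is bicollared; the complement $f^{-1}([0,1/2])$ is then the desired compact $n$-manifold with boundary $f^{-1}(1/2)$. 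This level-set argument is precisely the content of Cohen's theorem that the paper quotes, so your proof is recoverable, but as written the bicollar is obtained by appeal to a theorem whose hypotheses are not met.
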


\begin{proof} If $\S_{\G}=\emptyset$ the result is trivial since $\widehat M_{\G}$ is a closed $n$-manifold. If $\S_{\G}\neq\emptyset$ then Theorem~5.3 of \cite{[Co1]}, used with $Y=\emptyset$, says that the topological boundary $\textrm{bd}(N(\S_{\G}))$ of $N(\S_{\G})$ in $\widehat M_{\G}$ is bicollared in the (non-compact) $n$-manifold $\widehat M_{\G}-\vert\S_{\G}\vert$. It immediately follows that $M_{\G}=\widehat M_{\G}-\textrm{int}(N(\S_{\G}))$ is a compact $n$-manifold with boundary $\partial M_{\G}=\textrm{bd}(N(\S_{\G}))$.
\end{proof}

\medskip

We define $M_{\G}=\widehat M_{\G}-\textrm{int}(N(\S_{\G}))$ and say that the $(n+1)$-colored graph $\G$ also {\it represents} $M_{\G}$. If $\S_{\G}$ is empty then $M_{\G}=\widehat M_{\G}$ is a closed $n$-manifold. Otherwise, by the previous lemma $M_{\G}$ is a compact $n$-manifold with non-empty boundary. 

The next proposition gives a combinatorial description of $M_{\G}$. Recall that, for a subcomplex $H$ of a simplicial complex $K$, the {\it simplicial neighborhood of $H$ in $K$} is the subcomplex $N(H,K)$ of $K$ containing all simplexes of $K$ not disjoint from $H$, and their faces. Moreover, the {\it complement of $H$ in $K$} is the subcomplex $C(H,K)$ of $K$ containing all simplexes of $K$ disjoint from $H$ and set $\dot N(H,K)=N(H,K)\cap C(H,K)$.

\begin{prop} \label{manifold2} If $\G$ is an $(n+1)$-colored graph then $$M_{\G}=\vert C(\S'_{\G},\C''_{\G})\vert=\bigcup_{\L\in \R'(\G)}|\textrm{Star}(V_{\L},\C''_{\G})|.$$ Moreover, $\partial M_{\G}=\vert\dot N(\S'_{\G},\C''_{\G})\vert$.
\end{prop}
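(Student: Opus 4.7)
The plan is to split the proposition into two parts: establishing the first equality $M_{\G}=\vert C(\S'_{\G},\C''_{\G})\vert$ (together with the boundary formula $\partial M_{\G}=\vert\dot N(\S'_{\G},\C''_{\G})\vert$) via a regular neighborhood argument, and then proving the second equality $\vert C(\S'_{\G},\C''_{\G})\vert=\bigcup_{\L\in\R'(\G)}\vert\textrm{Star}(V_{\L},\C''_{\G})\vert$ by a combinatorial description of the complement.

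For the first part, Lemma \ref{full} tells us that $\S_{\G}$ is a full subcomplex of $\C'_{\G}$, and a standard fact on derived subdivisions then yields that $\S'_{\G}$ is a full subcomplex of $\C''_{\G}$. I would then invoke the classical regular neighborhood theorem of PL topology (see \cite{[Hu]} or \cite{[Gl]}): for a full subcomplex $H$ of a simplicial complex $K$, the underlying space $\vert N(H,K')\vert$ is a regular neighborhood of $\vert H\vert$ in $\vert K\vert$. Applying this with $H=\S'_{\G}$ and $K=\C'_{\G}$, so that $K'=\C''_{\G}$, produces $\vert N(\S'_{\G},\C''_{\G})\vert$ as a regular neighborhood of $\vert\S_{\G}\vert$ in $\widehat M_{\G}$. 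By uniqueness of regular neighborhoods up to ambient PL isotopy, I may take this specific simplicial neighborhood as the $N(\S_{\G})$ appearing in the definition of $M_{\G}$, which immediately yields the two identities involving $C(\S'_{\G},\C''_{\G})$ and $\dot N(\S'_{\G},\C''_{\G})$.

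For the second equality, I would use the description of $\C''_{\G}$ as the order complex of the face poset of $\C'_{\G}$: each simplex $\tau$ of $\C''_{\G}$ corresponds to a chain $\sigma_0<\sigma_1<\cdots<\sigma_k$ of simplices of $\C'_{\G}$, with each $\sigma_i$ encoded by a chain in $\R(\G)$. Recall from the discussion preceding Lemma \ref{full} that $\sigma\in\S_{\G}$ iff its smallest residue is singular (equivalently, iff every residue in its chain is singular), and from the consequence stated after Corollary \ref{ferri} that ordinariness propagates downward in $\R(\G)$. Then $\tau\in C(\S'_{\G},\C''_{\G})$ iff no vertex $\hat\sigma_i$ lies in $\S'_{\G}$, iff every $\sigma_i\notin\S_{\G}$; whereas $\tau\in\textrm{Star}(V_{\L},\C''_{\G})$ for some $\L\in\R'(\G)$ iff adjoining the $0$-simplex $V_{\L}$ to the chain $\sigma_0<\cdots<\sigma_k$ still yields a chain in $\C'_{\G}$, which (since $V_{\L}$ can only sit at the bottom) forces $V_{\L}$ to be a vertex of $\sigma_0$, i.e., forces $\sigma_0$ to contain an ordinary vertex.

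The equivalence of these two conditions is then bookkeeping: because $\sigma_0<\sigma_i$, the chain of residues defining $\sigma_0$ is a subchain of the chain defining $\sigma_i$, so the smallest residue of $\sigma_i$ is a subresidue of the smallest residue of $\sigma_0$. By downward propagation of ordinariness, if $\sigma_0$ has an ordinary vertex then the smallest residue of each $\sigma_i$ is ordinary, so every $\sigma_i\notin\S_{\G}$; the reverse implication is immediate on taking $i=0$. The main obstacle is the careful invocation of the regular neighborhood theorem — one must be sure that a single derived subdivision past fullness of $\S_{\G}$ in $\C'_{\G}$ actually suffices to realize the regular neighborhood simplicially, which is precisely what the classical PL-topology references provide — while the combinatorial matching of the two descriptions of the complement is essentially formal once the order-complex picture of $\C''_{\G}$ is in hand.
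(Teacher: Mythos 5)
Your proposal is correct and follows essentially the same route as the paper: fullness of $\S_{\G}$ in $\C'_{\G}$ (Lemma~\ref{full}) is used to realize the regular neighborhood simplicially as the derived neighborhood $N(\S'_{\G},\C''_{\G})$ — the paper cites Cohen's relative regular neighborhood theory here, which is the apt reference since $\widehat M_{\G}$ need not be a manifold, so uniqueness is up to PL homeomorphism rather than ambient isotopy — and the identification of the complement with $\bigcup_{\L\in\R'(\G)}\textrm{Star}(V_{\L},\C''_{\G})$ is exactly the order-complex bookkeeping (ordinariness propagating downward in $\R(\G)$) that the paper compresses into ``it is easy to realize''.
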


\begin{proof} Since $\S_{\G}$ is a full subcomplex of $\C'_{\G}$, we can choose $N(\S_{\G})$ in the first barycentric subdivision $\C''_{\G}$ of the complex $\C'_{\G}$, by defining $N(\S_{\G})=\vert N(\S'_{\G},\C''_{\G})\vert$ (see \cite{[Co1]}), and therefore we have $M_{\G}=\vert C(\S'_{\G},\C''_{\G})\vert$ and $\partial M_{\G}=\vert\dot N(\S'_{\G},\C''_{\G})\vert$. It is easy to realize that $N(\S'_{\G},\C''_{\G})=\cup_{v\in S^0(\S_{\G})}\textrm{Star}(v,\C''_{\G})=\cup_{\L\in \R''(\G)}\textrm{Star}(V_{\L},\C''_{\G})$ and $C(\S'_{\G},\C''_{\G})=\cup_{v\in (S^0(\C'_{\G})-S^0(\S_{\G}))}\textrm{Star}(v,\C''_{\G})=\cup_{\L\in \R'(\G)}\textrm{Star}(V_{\L},\C''_{\G})$.
\end{proof}

\medskip

As an immediate consequence of the fact that $\widehat M_{\G}$ is a quasi-manifold it follows that $M_{\G}$ is connected. Moreover, the connected components of $\partial M_{\G}$ are in one-to-one correpondence with the connected components of $\vert\S_{\G}\vert$, since the simplexes of $\S'_{\G}$ have connected links in $\C''_{\G}$.

\begin{rem} \label{cylinder}
The compact $n$-manifold $M_{\G}$ can be obtained from $\G$ by an alternative algorithm, which differs from the one producing $\widehat M_{\G}$ only in correspondence of singular residues, where the cone constructions are replaced by cylinder ones. Namely, for any singular $h$-residue $\L$, instead of attaching to $\G^{(h-1)}$ the cone $C(\L^{(h-1)})$ along the base, we attach the cylinder $\textrm{Cyl}(\L^{(h-1)})$ along one of the two bases. 
In order to prove that, it suffices to show that $M_{\G}\cap c_{\L}=M_{\L}\times I$, for any singular residue $\L$ of $\G$. This can be achieved by applying Lemma~1.22 of \cite{[Hu]}, with $V=V_{\L}$, $K=C(\S'_{\L},\C''_{\L})$ and $L=(V_{\L}\star D)\cup C((V_{\L}\star\S_{\L})',(V_{\L}\star\C'_{\L})')$, where $D=C((V_{\L}\star\S_{\L})',(V_{\L}\star\C'_{\L})')\cap\textrm{Star}(V_{\L},(V_{\L}\star\C'_{\L})')$.
\end{rem}

\medskip


All interesting compact connected $n$-manifolds can be represented by $(n+1)$-colored graphs, as stated by the next proposition.

\begin{prop} \label{existence} If $M$ is a compact connected $n$-manifold with a (possibly empty) boundary without spherical components, then there exists an $(n+1)$-colored graph $\G$ such that $M=M_{\G}$.
\end{prop}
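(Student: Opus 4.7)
The plan is to produce a singular $n$-manifold $\widehat M$ whose topological singular set is a finite set of points, in such a way that $M$ is recovered from $\widehat M$ by deleting open regular neighborhoods of those points, and then to invoke the representation theorem from \cite{[G]} quoted above to realize $\widehat M$ as $\widehat M_{\G}$ for a suitable $(n+1)$-colored graph $\G$.

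First I would cap off the boundary. Let $B_1,\ldots,B_k$ be the connected components of $\partial M$; if $k=0$ then $M$ is closed and any triangulation of $M$, which is automatically a quasi-manifold, yields $\G$ directly through the cited result. If $k\ge 1$, each $B_i$ is by hypothesis a closed connected $(n-1)$-manifold not homeomorphic to $S^{n-1}$, and I form the PL space
$$\widehat M \;=\; M\,\cup_{\partial M}\,\bigsqcup_{i=1}^{k}C(B_i)$$
by coning off every boundary component. The cone points $p_1,\ldots,p_k$ are the only points at which $\widehat M$ fails to be locally Euclidean, because each $p_i$ admits a cone neighborhood with link $B_i\not\cong S^{n-1}$, while all other points lie in the manifold interior of $M$ or in the open cones.

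Next I would triangulate $\widehat M$ as a simplicial complex $K$ in which each $p_i$ appears as a vertex whose link is a triangulation of $B_i$; this is standard, starting from any PL triangulation of $M$ and joining $p_i$ to a triangulation of its coned boundary component. The resulting $K$ is a singular $n$-manifold in the sense of Section~2 (the link of $p_i$ is the closed connected $(n-1)$-manifold $B_i$, and every other vertex has spherical link), and in particular a quasi-manifold. The cited theorem from \cite{[G]} therefore produces an $(n+1)$-colored graph $\G$ with $\widehat M_{\G}\cong |K|=\widehat M$.

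It remains to match the combinatorial and topological singular data. By Proposition~\ref{link}, the link in $\C'_{\G}$ of a vertex $V_{\L}$ is a sphere precisely when $\L$ is ordinary, so $|\S_{\G}|$ is exactly the set of points at which $\widehat M_{\G}$ fails to be locally Euclidean. Transported through the homeomorphism $\widehat M_{\G}\cong\widehat M$, this set is $\{p_1,\ldots,p_k\}$. By uniqueness of PL regular neighborhoods, one may choose $N(|\S_{\G}|)=\bigsqcup_{i=1}^{k}C(B_i)$, so that $M_{\G}=\widehat M_{\G}-\mathrm{int}\,N(|\S_{\G}|)\cong M$. I expect the only genuinely delicate point to be this final identification, namely verifying that the combinatorial singular set of $\G$ coincides geometrically with the cone points and that regular-neighborhood uniqueness then recovers $M$ on the nose; everything else reduces to routine PL triangulation and a direct appeal to the quasi-manifold representation theorem.
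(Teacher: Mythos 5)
Your argument is essentially the paper's own proof: cone off the boundary components to obtain a singular $n$-manifold $\widehat M$, realize $\widehat M$ as $\widehat M_{\G}$ via a representation theorem, observe that the singular set of $\G$ corresponds to the cone points, and recover $M$ by regular-neighborhood uniqueness (the paper cites Theorem~1 of \cite{[CCG]} for the realization step rather than the quasi-manifold proposition from \cite{[G]}, but both apply, since a singular manifold is a quasi-manifold). One caveat: your claim that $|\S_{\G}|$ is ``exactly the set of points at which $\widehat M_{\G}$ fails to be locally Euclidean'' is not correct in general --- singularity of a residue means $\widehat M_{\L}$ is not a PL sphere, which can occur at topologically manifold points, as the paper's own double-suspension of the Poincar\'e sphere (where $|\S_{\G}|\cong S^1$ yet $\widehat M_{\G}$ is homeomorphic to $S^5$) shows. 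What you actually need, and what does hold, is that the singular set is a PL invariant (it is preserved under subdivision), so the PL homeomorphism $\widehat M_{\G}\cong |K|$ carries $|\S_{\G}|$ onto the singular set of $K$, namely $\{p_1,\ldots,p_k\}$; with that correction your final identification, and hence the proof, is sound.
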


\begin{proof} Let $\widehat M$ be the space obtained from $M$ by performing a cone over any component of $\partial M$, then $\widehat M$ is an $n$-dimensional singular manifold. By Theorem~1 of \cite{[CCG]}, there exists an $(n+1)$-colored graph $\G$ such that $\widehat M=\widehat M_{\G}$. 
The singular set $|\S_{\G}|$ is the set of vertices of the cones, and the union of the cones is a regular neighborhood of $|\S_{\G}|$ in $\widehat M_{\G}$. As a consequence, $M_{\G}=M$.
\end{proof}

\medskip

Since two compact $n$-manifolds are homeomorphic if and only if (i) they have the same number of spherical boundary components and (ii) they are homeomorphic after capping off by balls these components, there is no loss of generality in studying compact $n$-manifolds without spherical boundary components.

When $\widehat M_{\G}$ is a singular manifold, the boundary of $M_{\G}$ admits a simple characterization in terms of the spaces represented by the singular residues.

\begin{lemma} \label{boundary0} Let $\G$ be an $(n+1)$-colored graph such that $\widehat M_{\G}$ is a singular manifold and let $\L$ be an $n$-singular residue of $\G$. Then the component of $\partial M_{\G}$ corresponding to $\L$ is homeomorphic to $M_{\L}=\widehat M_{\L}$.
\end{lemma}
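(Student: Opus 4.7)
The plan is to combine the hypothesis on $\widehat M_{\G}$ with the structural results already established, in particular Lemma~\ref{singular manifold}, Corollary~\ref{ferri}, Proposition~\ref{link} and Proposition~\ref{manifold2}. First I would exploit the assumption that $\widehat M_{\G}$ is a singular manifold to invoke Lemma~\ref{singular manifold}, which gives $\R''_h(\G)=\emptyset$ for every $h<n$. Since any residue of $\L$ is a fortiori a residue of $\G$, every proper subresidue of $\L$ is ordinary, so $\S_{\L}=\emptyset$ and consequently $M_{\L}=\widehat M_{\L}$; moreover, by Corollary~\ref{ferri} applied to the $n$-colored graph $\L$, this common space is a closed $(n-1)$-manifold.

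Next I would locate the boundary component of $M_{\G}$ that corresponds to $\L$. Lemma~\ref{full} now forces $\dim(\S_{\G})\le 0$, so $|\S_{\G}|$ is a finite set of points, namely the cone vertices $V_{\L'}$ for $\L'\in\R''_n(\G)$, and the component associated with $\L$ reduces to the single point $V_{\L}$. Using Proposition~\ref{manifold2} I identify, up to the subdivision $\C''_{\G}$, the component of $\partial M_{\G}=|\dot N(\S'_{\G},\C''_{\G})|$ over $V_{\L}$ with the underlying space of $\textrm{Link}(V_{\L},\C'_{\G})$.

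The final step is a direct application of Proposition~\ref{link} to the $0$-simplex $\sigma^0=V_{\L}$ with $d_0=n$: the link has underlying space homeomorphic to $\widehat M_{\L}\star S^{n-0-n-1}=\widehat M_{\L}\star S^{-1}=\widehat M_{\L}$. Combining this with the previous identification yields the desired homeomorphism between the boundary component corresponding to $\L$ and $M_{\L}=\widehat M_{\L}$.

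The only slightly delicate point is matching the PL regular neighborhood of $|\S_{\G}|$ used in defining $M_{\G}$ with the simplicial neighborhood $N(\S'_{\G},\C''_{\G})$; this is already settled inside the proof of Proposition~\ref{manifold2}, so once that proposition is invoked the link computation via Proposition~\ref{link} closes the argument. As an alternative cross-check, Remark~\ref{cylinder} gives the same conclusion even more transparently: under our hypothesis the construction of $M_{\G}$ differs from that of $\widehat M_{\G}$ only by replacing the cone $C(\L^{(n-1)})$ with the cylinder $\textrm{Cyl}(\L^{(n-1)})=\widehat M_{\L}\times I$, and the free end of this cylinder is precisely the boundary component associated with $\L$.
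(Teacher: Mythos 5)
Your argument is correct, and it shares the paper's skeleton: both identify the boundary component attached to $\L$ with the link of the cone vertex $V_{\L}$, using that $|\S_{\G}|$ is a finite set of points so that $\dot N(\S'_{\G},\C''_{\G})$ decomposes into the links of the singular cone vertices. Where you diverge is in the computation of that link. The paper builds an explicit simplicial isomorphism $\iota$ between $\C''_{\L}$ and $\textrm{Link}(V_{\L},(V_{\L}\star\C'_{\L})')$ by appending $\L$ to chains of residues; you instead quote Proposition~\ref{link} for the $0$-simplex $\langle V_{\L}\rangle$ with $d_0=n$, getting $\widehat M_{\L}\star S^{-1}=\widehat M_{\L}$ directly, and pass from $\textrm{Link}(V_{\L},\C'_{\G})$ to $\textrm{Link}(V_{\L},\C''_{\G})$ via the standard fact that links of a vertex in different triangulations of the same polyhedron are PL homeomorphic. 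Your route is shorter and reuses machinery already proved; what it gives up is the explicit combinatorial identification $\iota$, which the paper is not constructing gratuitously -- it is invoked again in the proof of Proposition~\ref{boundary1} to glue the pieces $M_{\L_i}$ along common boundary components, so a purely "up to homeomorphism" statement of the lemma would leave that later proof without its main tool. Your preliminary observations (via Lemma~\ref{singular manifold} and Corollary~\ref{ferri}) that $\S_{\L}=\emptyset$, hence $M_{\L}=\widehat M_{\L}$ is a closed $(n-1)$-manifold, match the paper's closing remark, and the cross-check via Remark~\ref{cylinder} is a legitimate second proof.
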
 

\begin{proof} The component of $\partial M_{\G}$ corresponding to $\L$ is $B=\vert \textrm{Link}(V_{\L},\C''_{\G})\vert=\vert \textrm{Link} (V_{\L},(V_{\L}\star\C'_{\L})')\vert$.
An $h$-simplex of $(V_{\L}\star\C'_{\L})'$ is $\s^h=\langle \widehat L_0,\widehat L_1, \ldots\widehat L_h\rangle$, where $L_i$ is a chain $\L_{i,0}\prec\L_{i,1}\prec\cdots\prec\L_{i,s_i}$ in $\R(\L)\cup\{\L\}$, for $i=0,1,\ldots,h$, such that $L_0<L_1<\cdots<L_h$ and $\widehat L_i$ is the barycenter of the simplex $\langle V_{\L_{i,0}},V_{\L_{i,1}},\ldots,V_{\L_{i,s_i}}\rangle$. 
The simplex $\s^h$ belongs to $\textrm{Link} (V_{\L},(V_{\L}\star\C'_{\L})')$ (resp. to $\C''_{\L}$) if and only if $\L\neq\L_{0,0}$ is the last element of the chain $L_0$ (resp. $\L$ is not an element of the chain $L_h$). Then the map $\iota:S^0(\C''_{\L})\to S^0(\textrm{Link} (V_{\L},(V_{\L}\star\C'_{\L})'))$ defined by $\iota(\widehat L)=\widehat{L'}$, where $L$ is the chain $\L_0\prec\L_1\prec\cdots\prec\L_k$, with $\L_k\prec\L$, and $L'$ is the chain $\L_0\prec\L_1\prec\cdots\prec\L_k\prec\L$, induces an isomorphism between $\C''_{\L}$ and $\textrm{Link} (V_{\L},(V_{\L}\star\C'_{\L})')$. As a consequence, $B$ is homeomorphic to $\vert\C''_{\L}\vert=\widehat M_{\L}$ ($= M_{\L}$ since all residues of $\L$ are ordinary).
\end{proof}

\begin{cor} If $\widehat M_{\G}$ is a singular manifold then $\partial M_{\G}$ has no spherical components.
\end{cor}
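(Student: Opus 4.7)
The plan is to read off the statement directly from Lemma~\ref{boundary0} together with Lemma~\ref{singular manifold} and Corollary~\ref{ferri}, by identifying each boundary component of $M_{\G}$ with the space represented by the associated singular $n$-residue and then showing this space is a closed $(n-1)$-manifold which is not a sphere.

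First, I would fix an arbitrary connected component $B$ of $\partial M_{\G}$. From the discussion following Proposition~\ref{manifold2}, the components of $\partial M_{\G}$ are in one-to-one correspondence with the connected components of $|\S_{\G}|$. Since $\widehat M_{\G}$ is a singular manifold, Lemma~\ref{singular manifold} gives $\R''_h(\G)=\emptyset$ for all $h<n$, so $|\S_{\G}|$ is a finite set of isolated points, each of the form $V_{\L}$ for some singular $n$-residue $\L$ of $\G$. Hence $B$ corresponds to a singular $n$-residue $\L$, and Lemma~\ref{boundary0} yields $B\cong\widehat M_{\L}$.

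Next I would argue that $\widehat M_{\L}$ is a closed $(n-1)$-manifold. Viewing $\L$ as an $n$-colored graph, its essential residues are precisely the essential residues of $\G$ contained in $\L$, all of which have dimension $<n$ in $\G$ and are therefore ordinary (again by Lemma~\ref{singular manifold}); in particular every $(n-1)$-residue of $\L$ is ordinary. Corollary~\ref{ferri} applied to $\L$ then forces $\widehat M_{\L}$ to be a closed $(n-1)$-manifold. Finally, by the very definition of singular residue, an $n$-residue $\L$ is singular precisely when $\widehat M_{\L}$ is not an $(n-1)$-sphere. Hence $B\cong\widehat M_{\L}$ is a closed $(n-1)$-manifold distinct from $S^{n-1}$, so it is not a spherical component, completing the proof.

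There is no real obstacle here: everything has already been set up in the earlier lemmas, and the only point that needs a moment of care is checking that the residues of $\L$ (regarded as an $n$-colored graph on $n$ colors) are ordinary in the sense required by Corollary~\ref{ferri}; this is immediate because ordinariness is an intrinsic property of the residue as a subgraph.
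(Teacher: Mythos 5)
Your proof is correct and follows exactly the route the paper intends: the corollary is stated without proof immediately after Lemma~\ref{boundary0} precisely because each boundary component is $\widehat M_{\L}$ for a singular $n$-residue $\L$, and ``singular'' means by definition that $\widehat M_{\L}$ is not an $(n-1)$-sphere. Your extra care in checking (via Lemma~\ref{singular manifold} and Corollary~\ref{ferri}) that $\widehat M_{\L}$ is a closed $(n-1)$-manifold and that ordinariness of the subresidues is intrinsic is sound, though the paper already records this in the proof of Lemma~\ref{boundary0} itself.
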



If $\dim(\S_{\G})=1$, the graph $\G$ has no singular $(n-2)$ residues, and any singular $(n-1)$-residue $\O$ is contained in exactly two $n$-residues $\L$ and $\L'$. 
Using the isomorphism $\iota$ defined in the proof of Lemma~\ref{boundary0}, we can suppose that $\widehat M_{\O}=M_{\O}$ is a boundary component of both $M_{\L}$ and $M_{\L'}$. Therefore, we can define the space $M_{\L}\cup^{\partial} M_{\L'}$ by gluing $M_{\L}$ with $M_{\L'}$ along their common boundary components (corresponding to common singular $(n-1)$-residues). Using this trick the boundary of $M_{\G}$ can be described as gluings of the manifolds with boundary corresponding to the singular $n$-residues of $\G$. 

\begin{prop} \label{boundary1} Let $\G$ be an $(n+1)$-colored graph such that $\dim(\S_{\G})=1$ and let $S$ be a connected component of $\vert\S_{\G}\vert$. Then the component of $\partial M_{\G}$ corresponding to $S$ is homeomorphic to $\bigcup^{\partial}_{1\le i\le s}M_{\L_i}$, where $\L_1,\ldots,\L_s$ are the $n$-residues of $R_S$.
\end{prop}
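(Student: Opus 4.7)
My plan is to use Remark~\ref{cylinder} to describe $M_{\G}$ near the 1-complex $S$ as a union of cylinders, and then to identify the resulting boundary component $B$ as the claimed space via a PL collapse of the $(n-1)$-dimensional cylinders.

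Since $\dim(\S_{\G})=1$, the only singular residues of $\G$ contributing to $S$ are the $\L_i\in R_S$ and the singular $(n-1)$-residues $\O$ such that $\langle V_\O,V_{\L_i}\rangle$ is a 1-simplex of $S$; as recalled before the statement, each such $\O$ is contained in exactly two members of $R_S$, which I write as $\L_{i(\O)}$ and $\L_{i'(\O)}$. First I would verify, using Lemma~\ref{singular manifold} applied to each $\L_i$ regarded as its own $n$-colored graph, that $\widehat M_{\L_i}$ is an $(n-1)$-dimensional singular manifold: indeed its singular residues are precisely the $\O\prec\L_i$ that are singular, all of dimension $n-1$, so the hypothesis of Lemma~\ref{singular manifold} is satisfied. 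Then Lemma~\ref{boundary0} applied to $\L_i$ yields the boundary decomposition $\partial M_{\L_i}=\bigsqcup_{\O\prec\L_i,\,\O\text{ singular}}\widehat M_\O$, where each $\widehat M_\O$ is a closed $(n-2)$-manifold (all proper subresidues of $\O$ are ordinary, so $M_\O=\widehat M_\O$).

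Next I apply Remark~\ref{cylinder}. For each $\L_i\in R_S$, $M_{\G}\cap c_{\L_i}=M_{\L_i}\times I$ is an $n$-dimensional cylinder attached to the rest of $M_{\G}$ along one base $M_{\L_i}\times\{1\}$, while the opposite base $M_{\L_i}\times\{0\}$ lies on $\partial M_{\G}$. For each singular $(n-1)$-residue $\O$ of $S$, $M_{\G}\cap c_\O=\widehat M_\O\times I$ is an $(n-1)$-dimensional cylinder which, being codimension one in $M_{\G}$ and sandwiched between the modified $n$-cells corresponding to $\L_{i(\O)}$ and $\L_{i'(\O)}$, lies entirely on $\partial M_{\G}$. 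The incidences $\O\prec\L_{i(\O)}$ and $\O\prec\L_{i'(\O)}$ dictate the gluings: the $\widehat M_\O$-component of $\partial M_{\L_{i(\O)}}\times\{0\}$ is identified with one base of $\widehat M_\O\times I$, and the corresponding $\widehat M_\O$-component of $\partial M_{\L_{i'(\O)}}\times\{0\}$ with the other.

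Assembling these pieces, $B=\bigl(\bigcup_i M_{\L_i}\times\{0\}\bigr)\cup\bigl(\bigcup_\O\widehat M_\O\times I\bigr)$ with the gluings just described. Collapsing each cylinder $\widehat M_\O\times I$ onto a single copy of $\widehat M_\O$ is a PL homeomorphism of $B$ identifying the $\widehat M_\O$-boundary component of $M_{\L_{i(\O)}}$ with that of $M_{\L_{i'(\O)}}$, and this yields precisely $\bigcup^{\partial}_{1\le i\le s}M_{\L_i}$, as claimed. The main technical obstacle is the book-keeping of the gluings, in particular matching each base of $\widehat M_\O\times I$ with the correct boundary component of $M_{\L_{i(\O)}}$ or $M_{\L_{i'(\O)}}$; this can be handled by running the PL argument underlying Remark~\ref{cylinder} (based on Lemma~1.22 of \cite{[Hu]}) simultaneously at all singular residues of $S$, in effect extending the isomorphism $\iota$ from the proof of Lemma~\ref{boundary0} from a single cone point to the entire 1-complex $S$.
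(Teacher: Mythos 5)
Your overall strategy (decompose the boundary component into a piece $\cong M_{\L_i}$ for each $n$-residue of $R_S$ and a connecting piece for each singular $(n-1)$-residue $\O$, then check the gluings) is the same as the paper's, and your preliminary observations --- that each $\widehat M_{\L_i}$ is an $(n-1)$-dimensional singular manifold and that $\partial M_{\L_i}=\bigsqcup_{\O}\widehat M_\O$ by Lemma~\ref{boundary0} --- are correct. But the step where you locate $\partial M_{\G}$ inside the cells is wrong, and it is exactly the step that is supposed to produce the gluings. The set $M_{\G}\cap c_\O=\widehat M_\O\times I$ does \emph{not} lie on $\partial M_{\G}$: a generic point in the interior of the $(n-1)$-cell $c_\O$ is a manifold point of $\widehat M_{\G}$ (the two $n$-cells $c_{\L_{i(\O)}}$ and $c_{\L_{i'(\O)}}$ close up around it) and lies outside the regular neighborhood $N(\S_{\G})$, hence in the manifold interior of $M_{\G}$. ``Codimension one and sandwiched between two $n$-cells'' is precisely the local picture of an \emph{interior} wall, not of a boundary face. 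In fact $\partial M_{\G}\cap \bar c_\O$ is only a single slice $\widehat M_\O$ near the cone point $V_\O$ --- the equator of $\vert\mathrm{Link}(V_\O,\C'_{\G})\vert\cong\Sigma(\widehat M_\O)$ --- which is what the paper's proof identifies as $\vert\dot N(\S'_{\O_i},\C''_{\O_i})\vert$, the common boundary component along which the two adjacent copies of $M_{\L}$ are glued.

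Symmetrically, your accounting of $M_{\G}\cap c_{\L_i}=M_{\L_i}\times I$ records only the base $M_{\L_i}\times\{0\}$ as lying on $\partial M_{\G}$ and forgets the lateral boundary $\partial M_{\L_i}\times I=\bigsqcup_{\O}(\widehat M_\O\times I)$, which is the boundary of the removed tube around the edges $\langle V_\O,V_{\L_i}\rangle$ of $\S_{\G}$ and \emph{does} lie on $\partial M_{\G}$. This omission is fatal to the assembly as written: the base $M_{\L_i}\times\{0\}$ sits near the cone point $V_{\L_i}$, while $c_\O$ sits at the opposite end of the cell (inside $\partial c_{\L_i}=\widehat M_{\L_i}$), so the two sets you propose to glue are disjoint and only become adjacent through the lateral tubes you dropped. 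The correct bookkeeping --- carried out combinatorially in the paper via the subcomplexes $K$ and $K_i$ of $\bar c''_{\L}$ --- is that $\partial M_{\G}\cap\bar c''_{\L_i}$ is the union of the inner base with the lateral tubes $\vert K_i\vert\cong M_{\O_i}\times I$, which is still homeomorphic to $M_{\L_i}$, and that two such pieces meet exactly along $\partial M_{\G}\cap\bar c''_{\O}\cong M_{\O}$. With that correction your collapsing argument goes through and recovers $\bigcup^{\partial}_{1\le i\le s}M_{\L_i}$; as written, the proof rests on a false description of where $\partial M_{\G}$ meets the cells.
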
 

\begin{proof} 
Let $\L$ be a singular $n$-residue of $\G$ belonging to $R_S$, and let $\O_1,\O_2,\ldots,\O_m$ be the singular $(n-1)$-residues of $\L$. It suffices to prove that (i) $\vert\dot N(\S'_{\G},\C''_{\G})\cap\bar c''_{\L}\vert$ is homeomorphic to $M_{\L}$ and (ii) $\vert\dot N(\S'_{\G},\C''_{\G})\cap\bar c''_{\O_i}\vert$ is homeomorphic to $M_{\O_i}$ and is a boundary component of $\vert\dot N(\S'_{\G},\C''_{\G})\cap\bar c''_{\L}\vert$, for $i=1,\ldots,m$.

Referring to the proof of Lemma~\ref{boundary0}, the simplex $\s^h=\langle \widehat L_0,\widehat L_1, \ldots\widehat L_h\rangle$ of $(V_{\L}\star\C'_{\L})'$ belongs to $\dot N(\S'_{\G},\C''_{\G})$ if and only if either (i) $\L$ is the last element of the chain $L_0$, and $\L\neq\L_{0,0}\neq\O_j$, for $j=1,\ldots,m$, or (ii) there exists $j$ such that $\O_j\neq\L_{0,0}$ is the last element of the chain $L_0$. Let $K$ be the complex containing the simplexes satisfying (i) and, for $i=1,\ldots,m$, let $K_i$ be the complex containing the simplexes satisfying (ii) and such that $\L_{0,0}$ is a residue of $\O_i$. Moreover, $\s^h$ belongs to $C(\S'_{\L},\C''_{\L})$ if and only if $\L$ is not an element of the chain $L_h$ and $\L_{0,0}\neq\O_j$, for $i=1,\ldots,m$. In particular, $\s^h$ belongs to $C(\S'_{\O_i},\C''_{\O_i})$ if and only if $\L$ is not an element of the chain $L_h$ and $\L_{0,0}$ is a residue of $\O_i$.

The map $\kappa:S^0(C(\S'_{\L},\C''_{\L}))\to S^0(K)$ defined by $\kappa(\widehat L)=\widehat{L'}$, where $L$ is the chain $\L_0\prec\L_1\prec\cdots\prec\L_k$, with $\L_k\prec\L$, and $L'$ is the chain $\L_0\prec\L_1\prec\cdots\prec\L_k\prec\L$, induces an isomorphism between $C(\S'_{\L},\C''_{\L})$ and $K$. Therefore, $\vert K\vert$ is homeomorphic to $M_{\L}=\vert C(\S'_{\L},\C''_{\L})\vert$. Moreover, for $i=1,\ldots,m$, the map $\kappa_i:S^0(C(\S'_{\O_i},\C''_{\O_i})\to S^0(K_i)$ defined in the same way of $\kappa$ with $L'$ being the chain $\L_0\prec\L_1\prec\cdots\prec\L_k\prec\L$ (resp. $\L_0\prec\L_1\prec\cdots\prec\L_k\prec\O_i$) if $\L_k=\O_i$ (resp. if $\L_k\prec\O_i$) induces an isomorphism between $C(\S'_{\O_i},\C''_{\O_i})$ and $K_i$. Therefore, $\vert K_i\vert$ is homeomorphic to $\vert C(\S'_{\O_i},\C''_{\O_i})\vert$, which is homeomorphic to $M_{\O_i}\times I$ by Lemma~1.22 of \cite{[Hu]}. Since $K\cap K_i$ is isomorphic to $\dot N(\S'_{\O_i},\C''_{\O_i})=\dot N(\S'_{\G},\C''_{\G})\cap\bar c''_{\O_i}$ via $\kappa_i$ and $\vert\dot N(\S'_{\O_i},\C''_{\O_i})\vert=M_{\O_i}$, then $\vert\dot N(\S'_{\G},\C''_{\G})\cap\bar c''_{\L}\vert$ is homeomorphic to $M_{\L}$. This concludes the proof.
\end{proof}

\medskip

When $\dim(\S_{\G})>1$ the description of the boundary of $M_{\G}$ is in general rather more involved. Note that Lemma~\ref{boundary0} (resp. Proposition~\ref{boundary1}) always applies when $\dim(M_{\G})=3$ (resp. when $\dim(M_{\G})=4$). 

Some properties of $M_{\G}$ and $\widehat M_{\G}$ correspond to properties of the representing graph $\G$. For example the following result generalizes Theorem~4 of \cite{[FGG]}:

\begin{prop}\label{bip}
The quasi-manifold $\widehat M_{\G}$, as well as the manifold $M_{\G}$, is orientable if and only if $\Gamma$ is bipartite.
\end{prop}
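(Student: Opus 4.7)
The plan is to work with Pezzana's dual complex $\K_{\G}$ (whose underlying polyhedron agrees with $\widehat M_{\G}$) and orient each top-dimensional simplex $s_v$ canonically by listing its vertices in color order as $[v_0,v_1,\ldots,v_n]$, where $v_i$ denotes the vertex of $s_v$ colored by $i$. A global orientation of $\widehat M_{\G}$ (equivalently, of $M_{\G}$; see below) corresponds to a choice of signs $\epsilon:V(\G)\to\{\pm1\}$ such that the assigned orientations $\epsilon_v\cdot[v_0,\ldots,v_n]$ induce opposite orientations on every shared $(n-1)$-face of $\K_{\G}$.

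To see how $\epsilon$ must behave along edges, I exploit the fact that a $c$-edge $vw$ of $\G$ corresponds to Pezzana's identification of the faces of $s_v$ and $s_w$ opposite to their $c$-colored vertex, sending $w_i$ to $v_i$ for $i\neq c$. The orientation $\epsilon_v[v_0,\ldots,v_n]$ induces on this face $\epsilon_v(-1)^c[v_0,\ldots,\widehat{v_c},\ldots,v_n]$, while the orientation from the $s_w$ side, transported under the identification, becomes $\epsilon_w(-1)^c[v_0,\ldots,\widehat{v_c},\ldots,v_n]$. Coherence therefore forces $\epsilon_v=-\epsilon_w$ across every edge of $\G$. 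Such a sign assignment exists if and only if $\G$ contains no odd cycle, that is, if and only if $\G$ is bipartite; since $\G$ is connected, the bipartition, hence the orientation, is unique up to a global swap.

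Finally, I reduce the statement for $M_{\G}$ to the one for $\widehat M_{\G}$: by Lemma~\ref{full}, the singular set $\vert\S_{\G}\vert$ has codimension at least $3$ in $\widehat M_{\G}$, and $M_{\G}$ is obtained from $\widehat M_{\G}$ by removing a regular neighborhood of $\vert\S_{\G}\vert$. All top-dimensional simplices of $\K_{\G}$ therefore survive in $M_{\G}$, and a coherent orientation of them simultaneously orients both $\widehat M_{\G}$ as a pseudomanifold and $M_{\G}$ as a manifold with boundary; conversely, either orientation pins down signs $\epsilon_v$ at every vertex. The only delicate step is the sign bookkeeping at the glued faces, which is a standard oriented-simplex computation; once it is verified, the equivalence with bipartiteness becomes purely graph-theoretic.
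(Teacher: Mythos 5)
Your proof is correct and follows essentially the same route as the paper: both work in Pezzana's dual complex $\K_{\G}$, identify an orientation of each $n$-simplex with a sign relative to the canonical color ordering of its vertices, observe that coherence across a shared $(n-1)$-face forces opposite signs on adjacent vertices of $\G$ (so orientability is equivalent to the absence of odd cycles, i.e., bipartiteness), and reduce the statement for $M_{\G}$ to the one for $\widehat M_{\G}$ by noting that every $n$-simplex of $\K_{\G}$ meets $M_{\G}$ in an $n$-ball. Your explicit $(-1)^c$ boundary-sign computation is just a more concrete rendering of the paper's "two classes of orderings of $\D$ up to even permutations" argument.
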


\begin{proof} Obviously any orientation on $\widehat M_{\G}$ restricts to an orientation on $ M_{\G}$. Furthermore, any orientation on $M_{\G}$ extends to an orientation on $\widehat M_{\G}$, since $|\bar \sigma''\cap C(\bar S''_{\G},\K''_{\G})|$ is an $n$-ball (resp. an $(n-1)$-ball), for any $n$-simplex (resp. $(n-1)$-simplex) $\sigma$ of $\K_{\G}$. Therefore it suffices to prove the statement for $\widehat M_{\G}$.
By construction any global orientation on $\widehat M_{\G}$ induces a local orientation for any maximal simplexes of $\K_{\G}$, which corresponds to one of the two classes of total orderings of the set $\D$, up to even permutations, in such a way that two $n$-simplexes having a common $(n-1)$-face are associated to different classes.
As a consequence, the graph $\G$ is bipartite since it can not have odd cycles. On the other hand, if $\G$ is bipartite with bipartition of vertices $V(\G)=V'\cup V''$, then orient any $n$-simplex associated to elements of $V'$ according with one class of orderings of $\D$ and any $n$-simplex associated to elements of $V''$ according with the other class. This choice provides a global orientation for $\widehat M_{\G}$. 
\end{proof}

\medskip

The computation of the Euler characteristic of $M_{\G}$, $\widehat M_{\G}$ and $|\S_{\G}|$ is a routine fact:

\begin{lemma}\label{chi} Let $\G$ be an $(n+1)$-colored graph. Then 
$$\chi(M_{\G})=\sum_{h=0}^n(-1)^h|\R'_h(\G)|,\  \chi(\widehat M_{\G})=\sum_{h=0}^n(-1)^{n-h}|\R_h(\G)|\ \textrm{ and }\  \chi(|\S_{\G}|)=\sum_{h=3}^n(-1)^{n-h}|\R''_h(\G)|.$$ 
\end{lemma}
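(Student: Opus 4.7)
The plan is to derive all three formulas from the pseudo-simplicial structure of $\K_{\G}$, reducing the third one to the first two via a Mayer--Vietoris identity and the standard doubling formula for the Euler characteristic of the boundary of a compact manifold.

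First, for $\chi(\widehat M_{\G})$ I would exploit the inclusion-reversing duality $\L\leftrightarrow s_{\L}$ of Section~\ref{construction}: each $h$-residue of $\G$ corresponds to an $(n-h)$-simplex of $\K_{\G}$, so counting open simplexes of $\K_{\G}$ by dimension (legitimate, since $\K_{\G}$ gives a CW decomposition of $|\widehat M_{\G}|$) yields $\chi(\widehat M_{\G})=\sum_{h=0}^n(-1)^{n-h}|\R_h(\G)|$. The same counting, restricted to the subcomplex $\bar\S_{\G}\subseteq\K_{\G}$ dual to the singular residues---whose underlying space is $|\S_{\G}|$, as already observed---gives $\chi(|\S_{\G}|)=\sum_{h=0}^n(-1)^{n-h}|\R''_h(\G)|$, which truncates to $h\ge 3$ since $\R''_h(\G)=\emptyset$ for $h\le 2$.

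For $\chi(M_{\G})$ I would apply Euler-characteristic additivity to the decomposition $\widehat M_{\G}=M_{\G}\cup N(\S_{\G})$, whose pieces are subcomplexes of $\C''_{\G}$ with intersection $\partial M_{\G}$; since $N(\S_{\G})$ deformation retracts onto $|\S_{\G}|$,
$$\chi(\widehat M_{\G})=\chi(M_{\G})+\chi(|\S_{\G}|)-\chi(\partial M_{\G}).$$
I would then invoke the identity $\chi(\partial M_{\G})=(1-(-1)^n)\chi(M_{\G})$, valid for any compact $n$-manifold with boundary: if $n$ is even then $\partial M_{\G}$ is odd-dimensional and closed (so its Euler characteristic vanishes), while if $n$ is odd the double $M_{\G}\cup_{\partial M_{\G}}M_{\G}$ is a closed odd-dimensional manifold, giving $0=2\chi(M_{\G})-\chi(\partial M_{\G})$. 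Substituting produces $(-1)^n\chi(M_{\G})=\chi(\widehat M_{\G})-\chi(|\S_{\G}|)$, and plugging in the first two formulas together with the partition $\R_h(\G)=\R'_h(\G)\sqcup\R''_h(\G)$ delivers $\chi(M_{\G})=\sum_{h=0}^n(-1)^h|\R'_h(\G)|$, as claimed.

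The main obstacle is not really technical but conceptual: the most direct-looking approach---counting simplexes of $C(\S'_{\G},\C''_{\G})$ from Proposition~\ref{manifold2}---is combinatorially unpleasant, and the cells of $\C_{\G}$ themselves are not balls for singular residues, so one cannot simply alternate-sum over the residues of $\G$. Routing the computation for $M_{\G}$ through $\chi(\widehat M_{\G})$, $\chi(|\S_{\G}|)$ and the parity identity for $\chi(\partial M_{\G})$ is precisely what keeps the whole argument at the level of routine bookkeeping.
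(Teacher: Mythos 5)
Your proposal is correct, and for the second and third formulas it coincides with the paper's argument: the paper likewise obtains $\chi(\widehat M_{\G})$ and $\chi(|\S_{\G}|)$ by counting the cells of the pseudo-simplicial complex $\K_{\G}$ (resp.\ of its subcomplex $\bar\S_{\G}$) under the correspondence sending an $h$-residue to an $(n-h)$-simplex. Where you genuinely diverge is the formula for $\chi(M_{\G})$. The paper computes it directly: by Remark~\ref{cylinder} the manifold $M_{\G}$ retracts onto $X=\bigcup_{\L\in\R'(\G)}c_{\L}$, and since the cells attached to ordinary residues are balls, $X$ is a CW-complex with exactly $|\R'_h(\G)|$ cells in dimension $h$, giving the alternating sum at once. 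You instead bypass the cylinder retraction entirely: you use the subcomplex decomposition $\widehat M_{\G}=M_{\G}\cup N(\S_{\G})$ of Proposition~\ref{manifold2}, the collapse of the regular neighborhood onto $|\S_{\G}|$ (legitimate, as $\S_{\G}$ is full in $\C'_{\G}$ by Lemma~\ref{full}), and the parity identity $\chi(\partial M)=(1-(-1)^n)\chi(M)$, then solve for $\chi(M_{\G})$ using the other two formulas and $|\R_h(\G)|=|\R'_h(\G)|+|\R''_h(\G)|$; the algebra checks out, including the degenerate case $\S_{\G}=\emptyset$ (where for odd $n$ both sides vanish). The paper's route is shorter but leans on Remark~\ref{cylinder}; yours trades that for standard manifold facts (vanishing of $\chi$ for closed odd-dimensional manifolds and the doubling argument) and avoids having to observe that the ordinary cells assemble into a genuine CW-complex, at the cost of making the first formula logically dependent on the other two.
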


\begin{proof} The manifold $M_{\G}$ retracts to the space $X=\cup_{\L\in\R'(\G)}c_{\L}$ by Remark~\ref{cylinder}, and therefore $\chi(M_{\G})=\chi(X)$ can be obtained by the usual formula for the CW-complexes. On the other hand, both $\chi(\widehat M_{\G})$ and $\chi(|\S_{\G}|)$ can be obtained from the pseudo-simplicial complex $\K_{\G}$ by considering that any $h$-residue of the graph corresponds to an $(n-h)$-simplex of $\K_{\G}$.
\end{proof}

\section{\hskip -0.7cm . Graph suspension}\label{section_suspension}

If $\G$ is an $n$-colored graph, for a fixed $c\in\Delta_{n-1}$ define $\Sigma_c(\G)$ as the $(n+1)$-colored graph obtained from $\G$ by adding a set of $n$-edges parallel to the $c$-edges of $\G$. We refer to $\Sigma_c(\G)$ as the {\it $c$-suspension} of $\G$. The following result states that this construction is strictly related to the suspension construction of PL spaces (in the following $\Sigma(\cdot)$ denotes the suspension of either a PL space or a simplicial complex).

\begin{thm} \label{suspension} Let $\G$ be an $n$-colored graph and $c$ be any color in $\Delta_{n-1}$, then:
\begin{itemize}
\item[•] (i) $\widehat M_{\Sigma_c(\G)}=\Sigma(\widehat M_{\G})$; 
\item[•] (ii) $\vert\S_{\Sigma_c(\G)}\vert=\vert\Sigma(\S_{\G})\vert$, if $\widehat M_{\G}$ is not a sphere;
\item[•] (iii) $M_{\Sigma_c(\G)}=M_{\G}\times I$, if $\widehat M_{\G}$ is not a sphere.
\end{itemize}
\end{thm}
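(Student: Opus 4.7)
The plan is to prove (i) first by identifying two distinguished cone vertices of $\C'_{\Sigma_c(\G)}$ as the suspension poles, and then to derive (ii) and (iii) from this identification. Consider the $n$-residues $\L_n := (\Sigma_c(\G))_{\widehat n}$ and $\L_c := (\Sigma_c(\G))_{\widehat c}$. Since the new $n$-edges are parallel to the $c$-edges, $\L_n$ is literally $\G$ (one deletes the added $n$-edges) while $\L_c$ is isomorphic to $\G$ via the $c \leftrightarrow n$ color relabeling, so $\widehat M_{\L_n} \cong \widehat M_{\L_c} \cong \widehat M_\G$. Applying Proposition~\ref{link} to the 0-simplices $V_{\L_n}$ and $V_{\L_c}$ (with $d_0 = n$ and $h = 0$, so that the join factor $\dot s'_{-1}$ is empty), the link of each in $\C'_{\Sigma_c(\G)}$ is a PL copy of $\widehat M_\G$; hence the closed stars of $V_{\L_n}$ and $V_{\L_c}$ are both topologically $C(\widehat M_\G)$, and they play the role of the two suspension cones.

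The remaining $n$-cells of $\widehat M_{\Sigma_c(\G)}$ come from the other $n$-residues, namely the $\widehat k$-residues for $k \in \Delta_{n-1} \setminus \{c\}$. A direct analysis of $\Sigma_c(\G)_{\widehat k}$ shows that each such residue equals $\Sigma_c(\L)$ for some $(n-1)$-residue $\L$ of $\G$ containing $c$ in its color set. I would then proceed by induction on $n$: the inductive hypothesis applied to each such $\L$ gives $\widehat M_{\Sigma_c(\L)} = \Sigma(\widehat M_\L)$, so the corresponding cone-cells $c_{\Sigma_c(\L)}$ are cones over suspensions of lower-dimensional residues of $\G$. These cells fit together to form the ``cylindrical middle band'' separating the two polar stars, completing the suspension decomposition and establishing $\widehat M_{\Sigma_c(\G)} \cong \Sigma(\widehat M_\G)$.

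Part (ii) follows by restricting the identification to singular complexes: an $h$-residue $\L^*$ of $\Sigma_c(\G)$ is singular iff $\widehat M_{\L^*}$ is not an $(h-1)$-sphere, and the correspondence between residues of $\Sigma_c(\G)$ and residues of $\G$ (doubled and relabeled for those containing $c$) preserves this property since $\Sigma(X)$ is a sphere iff $X$ is. The hypothesis that $\widehat M_\G$ is not a sphere guarantees that $\L_n$ and $\L_c$ themselves are singular, contributing the two suspension poles to $|\S_{\Sigma_c(\G)}|$, so the recursive matching yields $|\S_{\Sigma_c(\G)}| = |\Sigma(\S_\G)|$. For (iii), combining (i) and (ii) gives $M_{\Sigma_c(\G)} = \Sigma(\widehat M_\G) - \mathrm{int}\, N(|\Sigma(\S_\G)|)$; writing the suspension in the standard way as $(\widehat M_\G \times [-1,1])/\!\sim$ with both ends collapsed, and using that the hypothesis places the two poles in $|\Sigma(\S_\G)|$, a regular neighborhood of the singular set decomposes as two cone caps near $\pm 1$ together with a tube $N(\S_\G) \times [-1+\epsilon, 1-\epsilon]$; removing its interior leaves $(\widehat M_\G \setminus \mathrm{int}\, N(\S_\G)) \times I = M_\G \times I$. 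The main obstacle is the inductive assembly in (i): a careful combinatorial argument is needed to confirm that the cells $c_{\Sigma_c(\L)}$ for $\L$ an $(n-1)$-residue of $\G$ with $c \in \L$ glue together into the cylindrical middle band without gaps or overlaps.
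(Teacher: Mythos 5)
Your identification of the $n$-residues of $\Sigma_c(\G)$ is correct: the $\widehat n$- and $\widehat c$-residues $\L_n$ and $\L_c$ are both copies of $\G$, and every other $n$-residue is $\Sigma_c(\L)$ for an $(n-1)$-residue $\L$ of $\G$ whose color set contains $c$. But the decomposition you build on this --- two polar cones separated by a cylindrical band --- is not what the cone-complex $\C_{\Sigma_c(\G)}$ actually looks like. The cells $c_{\L_n}$ and $c_{\L_c}$ share every face $c_{\O}$ for which the color set of $\O$ avoids $c$ (such a residue misses both $c$ and $n$, hence is a face of both $n$-residues); these common faces have dimension up to $n-1$, so the two ``poles'' are glued to each other along an $(n-1)$-dimensional subcomplex and are not separated by the middle cells. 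The order-two case already shows this: for $\G$ a bigon and $n=2$, $\widehat M_{\Sigma_c(\G)}=S^2$ is a theta-graph with three disks attached, and the two ``polar'' disks share the edge of the remaining color. Consequently the cells $c_{\Sigma_c(\L)}$ do not assemble into $\widehat M_{\G}\times I$; they only fill the lens-shaped regions lying over the cells of $\G$ that contain $c$-edges. On top of this, the step you yourself flag as ``the main obstacle'' is the entire content of (i), and the induction as set up cannot deliver it: the inductive hypothesis gives only an abstract homeomorphism $\widehat M_{\Sigma_c(\L)}\cong\Sigma(\widehat M_{\L})$ for each middle residue, with no control on how the resulting product structures $c_{\Sigma_c(\L)}\cong c_{\L}\times I$ restrict to common faces, so nothing guarantees that they match up with one another or with the two polar cones.

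The paper resolves both problems at once by producing an explicit simplicial isomorphism instead of a cell-by-cell assembly: it subdivides $K=\{P',P''\}\star\C'_{\G}$ by starring each edge $\langle P',V_{\L}\rangle$ and $\langle P'',V_{\L}\rangle$, for every residue $\L$ of $\G$ containing $c$-edges, at interior points $V'_{\L},V''_{\L}$, and then checks on maximal simplices that the vertex map $P'\mapsto V_{\L_n}$, $P''\mapsto V_{\L_c}$, $V_{\L}\mapsto V_{\Sigma_c(\L)}$, $V'_{\L}\mapsto V_{\L}$, $V''_{\L}\mapsto V_{\L''}$ (identity on the remaining vertices, $\L''$ denoting the relabeled copy of $\L$) is an isomorphism onto $\C'_{\Sigma_c(\G)}$. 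In that picture the cone points of the northern and southern copies of the cells $c_{\L}$ with $\L$ containing $c$-edges are pushed off the equator toward $P'$ and $P''$, which is the correct version of the geometry your ``band'' is trying to capture. Parts (ii) and (iii) then follow from invariance of the singular set under subdivision and from the product-neighborhood lemma (Lemma~1.22 of Hudson) applied to each half $P'\star\C'_{\G}$; your outline for these two parts is in the right spirit, but it presupposes (i) and, for (iii), still requires verifying that your hand-built neighborhood of $|\Sigma(\S_{\G})|$ is a regular neighborhood in Cohen's sense before invoking uniqueness.
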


\begin{proof} Let $\O=\Sigma_c(\G)$ and denote by $\R^c(\G)$ the set of all residues of $\G$ containing $c$-edges. Each $\Delta$-residue $\L$ of $\G$ can be considered as a $\Delta$-residue of $\O$, as well as $\G$. So $\O$ has a unique $\widehat {n}$-residue $\G$, and a unique $\widehat c$-residue $\G''$, which is isomorphic to $\G$. Moreover, if $\L\in\R^c(\G)$ let us call $\L'$ (resp. $\L''$) the $(\Delta\cup\{n\})$-residue of $\O$ (resp. the $(\Delta\cup\{n\}-\{c\})$-residue of $\O$) having the same vertices of $\L$. 

(i) Consider the $n$-dimensional simplicial complex $K=\Sigma(\C'_{\G})=\{P',P''\}\star \C'_{\G}$, then $S^0(K)=\{P',P''\}\cup_{h=0}^{n-1}\{V_{\L}\mid \L\in\R_h(\G)\}$. We will show by induction that a suitable subdivision of $K$ is isomorphic to $\C'_{\O}$. 

The induction is performed on the $h$-residues of $\R^c(\G)$, for $h=1,\ldots,n-1$.
The first step of the induction is to obtain a stellar subdivision $K_1$ of $K$, by 
starring the 1-simplex $\langle P',V_e\rangle$ at an internal point $V'_e$ and the 1-simplex $\langle P'',V_e\rangle$ at an internal point $V''_e$, for any $c$-edge $e$ of $\G$. It is obvious that $K_1$ does not depend on the order of the above starrings.  After this step the 1-simplex $\langle P',V_e\rangle$ (resp. $\langle P'',V_e\rangle$) is subdivided into the two 1-simplexes $\langle P',V'_e\rangle$ and $\langle V_e, V'_e\rangle$ (resp. $\langle P'',V''_e\rangle$ and $\langle V_e, V''_e\rangle$) and the maximal simplex $\langle V_v,V_e,V_{\L_2},\ldots,V_{\L_{n-1}},P'\rangle$ (resp. $\langle V_v,V_e,V_{\L_2},\ldots,V_{\L_{n-1}},P''\rangle$) of $K$ is subdivided into the two maximal simplexes $\langle V_v,V'_e,V_{\L_2},\ldots,V_{\L_{n-1}},P'\rangle$ and $\langle V_v,V'_e,V_e,V_{\L_2},\ldots,V_{\L_{n-1}}\rangle$ (resp. $\langle V_v,V''_e,V_{\L_2},\ldots,V_{\L_{n-1}},P''\rangle$ and $\langle V_v,V''_e,V_e,V_{\L_2},\ldots,V_{\L_{n-1}}\rangle$).

Suppose the first $h-1$ induction steps performed, producing the subdivision $K_{h-1}$ of $K$. We define the induction step $h$ as follows: produce the complex $K_h$ as the stellar subdivision of $K_{h-1}$ obtained by starring  both the 1-simplex $\langle P',V_{\L_h}\rangle$ at an internal point $V'_{\L_h}$ and the 1-simplex $\langle P'',V_{\L_h}\rangle$ at an internal point $V''_{\L_h}$, for any $h$-residue $\L_h$ of $\R^c(\G)$. Again the result of these starrings do not depend on their order. After this step the 1-simplex $\langle P',V_{\L_h}\rangle$ (resp. $\langle P'',V_{\L_h}\rangle$) is subdivided into the two 1-simplexes $\langle P',V'_{\L_h}\rangle$ and $\langle V_{\L_h}, V'_{\L_h}\rangle$, (resp. $\langle P'',V''_{\L_h}\rangle$ and $\langle V_{\L_h}, V''_{\L_h}\rangle$). 
The maximal simplex $\langle V_{\L_0},\ldots, V_{\L_j},V'_{\L_{j+1}},\ldots, V'_{\L_{h-1}},V_{\L_h},V_{\L_{h+1}},\ldots,V_{\L_{n-1}},P'\rangle$ 
of $K_{h-1}$ is subdivided into the two maximal simplexes $\langle V_{\L_0},\ldots, V_{\L_j},V'_{\L_{j+1}},\ldots, V'_{\L_{h-1}},V'_{\L_h},V_{\L_{h+1}},\ldots,V_{\L_{n-1}},P'\rangle$ and $\langle V_{\L_0},\ldots, V_{\L_j},V'_{\L_{j+1}},\ldots, V'_{\L_{h-1}},V'_{\L_h},V_{\L_h},V_{\L_{h+1}},\ldots,V_{\L_{n-1}}\rangle$, where $0\le j\le h-1$. 
Analogously, the maximal simplex $\langle V_{\L_0},\ldots, V_{\L_j},V''_{\L_{j+1}},\ldots, V''_{\L_{h-1}},V_{\L_h},V_{\L_{h+1}},\ldots,V_{\L_{n-1}},P''\rangle$ is subdivided into the two maximal simplexes $\langle V_{\L_0},\ldots, V_{\L_j},V''_{\L_{j+1}},\ldots, V''_{\L_{h-1}},V''_{\L_h},V_{\L_{h+1}},\ldots,V_{\L_{n-1}},P''\rangle$ and  $\langle V_{\L_0},\ldots, V_{\L_j},V''_{\L_{j+1}},\ldots, V''_{\L_{h-1}},V''_{\L_h},V_{\L_h},V_{\L_{h+1}},\ldots,V_{\L_{n-1}}\rangle$.

At the end of the inductive process we obtain a complex $K_{n-1}$, which is a subdivision of $K$, having vertex set $S^0(K_{n-1})=S^0(K)\cup\{V'_{\L}\mid \L\in\R^c(\G)\}\cup\{V''_{\L}\mid \L\in\R^c(\G)\}$.

Let $\phi: S^0(K_{n-1})\to S^0(\O)$ be the map defined by $\phi(P')=V_{\G}$, $\phi(P'')=V_{\G''}$, $\phi(V_{\L})=V_{\L'}$ if $\L\in\R^c(\G)$ and 
$\phi(V_{\L})=V_{\L}$ if $\L\not\in\R^c(\G)$, $\phi(V'_{\L})=V_{\L}$ and $\phi(V''_{\L})=V_{\L''}$, for any $\L\in\R^c(\G)$. Then $\phi$ is clearly a bijection and induces an isomorphism between $K_{n-1}$ and $\C'_{\O}$.
 
In order to prove that, let $\sigma=\langle V_{\L_0},V_{\L_1},\ldots,V_{\L_{n-1}},V_{\L_n}\rangle$ be a maximal simplex of $\C'_{\O}$, where $\L_i$ is a $D_i$-residue of $\O$ such that $|D_i|=i$, for $i=0,\ldots,n$. 
If $D_n=\widehat n$ and $h=\min\{i\mid c\in D_i\}$ (resp. $D_n=\widehat c$ and $h=\min\{i\mid n\in D_i\}$), then $\sigma$ is the image via $\phi$ of the maximal simplex $\langle V_{\L_0},\ldots,V_{\L_{h-1}}, V'_{\L_h},V'_{\L_{h+1}},\ldots,V'_{\L_{n-1}},P'\rangle$ (resp. $\langle V_{\L_0},\ldots,V_{\L_{h-1}}, V''_{\tilde\L_h},V''_{\tilde\L_{h+1}},\ldots,V''_{\tilde\L_{n-1}},P''\rangle$, where $\tilde\L_{i}$ is the residue of $\G$ such that $\tilde\L_{i}''=\L_i$, for $i=h,\ldots,n-1$). 
Otherwise, let $D_n=\widehat j$ with $j\neq c,n$. Define $h'=\min\{i\mid c\in D_i\}$ and $h''=\min\{i\mid n\in D_i\}$.
If $h'<h''$ then $\sigma$ is the image via $\phi$ of the maximal simplex $\langle V_{\L_0},\ldots,V_{\L_{h'-1}}, V'_{\L_{h'}},V'_{\L_{h'+1}},\ldots,V'_{\L_{h''-1}}, V_{\tilde\L_{h''-1}},V_{\tilde\L_{h''}},\ldots,V_{\tilde\L_{n-1}}\rangle$, where $\tilde\L_{i}$ is the residue of $\G$ such that $\tilde\L_i'=\L_{i+1}$ for $i=h''-1,\ldots,n-1$. If $h''<h'$ then $\sigma$ is the image via $\phi$ of the maximal simplex $\langle V_{\L_0},\ldots,V_{\L_{h''-1}}, V''_{\L_{h''}},V''_{\L_{h''+1}},\ldots,V''_{\L_{h'-1}}, V_{\tilde\L_{h'-1}},V_{\tilde\L_{h'}},\ldots,V_{\tilde\L_{n-1}}\rangle$, where $\tilde\L_{i}$ is the residue of $\G$ such that $\tilde\L_{i}''=\L_i$ for $i=h'',\ldots,h'-1$, and $\tilde\L_{i}'=\L_{i+1}$ for $i=h'-1,\ldots,n-1$.

(ii) If $\sigma\in\C'_{\G}$ (possibly $\sigma=\emptyset$) then $\textrm{Link}(P'\star\sigma,K)=\textrm{Link}(P''\star\sigma,K)=\textrm{Link}(\sigma,\C'_{\G})$ (recall that $\textrm{Link}(\emptyset,\C'_{\G})=\C'_{\G}$). 
Therefore $\sigma$ is singular in $\C'_{\G}$ if and only if both $P'\star\sigma$ and $P''\star\sigma$ are singular in $K$. This proves that $S_K=\{P',P''\}\star \S_{\Gamma}=\Sigma(\S_{\Gamma})$ if  $\vert\C'_{\G}\vert=\widehat M_{\G}$ is not a sphere and $S_K=\emptyset$ if $\widehat M_{\G}$ is a sphere. Now the statement follows from the fact that the singular set of a complex is invariant under subdivisions.

(iii) By the previous points we can consider $M_{\Sigma_c(\G)}$ as $\vert K\vert-\textrm{int}(N(S_{K}))$. The complexes $H_1=P'\star \C'_{\G}$ and $H_2=P''\star \C'_{\G}$ are isomorphic, as well as $S_{H_1}$ and $S_{H_2}$. Therefore, it suffices to prove that $M_1=\vert H_1\vert-\textrm{int}(N(S_{H_1})) =M_{\G}\times I$.

By (ii), if $\widehat M_{\G}$ is not a sphere then $S_{H_1}= P'\star\S_{\G}$, which is a full subcomplex of $H_1$ containing $P'$.
As a consequence, $N(S_{H_1})=\vert N(S'_{H_1},H_1')\vert$ and $M_1=\vert C(S'_{H_1},H_1')\vert=\vert\cup_{v\in(S^0(H_1)-S^0(S_{H_1}))}\textrm{Star}(v,H_1')\vert=\vert\cup_{v\in(S^0(\C'_{\G})-S^0(\S_{\G}))}\textrm{Star}(v,H_1')\vert$. The complex $D=C(S'_{H_1},H_1')\cap\textrm{Star}(P',H_1')$ is isomorphic to $C(\S_{\G},\C''_{\G})$, $L=(P'\star D)\cup C(S'_{H_1},H_1')$ is a subdivision of $P'\star C(\S'_{\G},\C''_{\G})$ and $\textrm{Star}(P',L)\cap C(\S'_{\G},\C''_{\G})=\emptyset$. Then Lemma~1.22 of \cite{[Hu]} applies and $M_1=\vert C(S'_{H_1},H_1')\vert=\vert P'\star C(\S'_{\G},\C''_{\G})\vert-\textrm{int}\vert\textrm{Star}(v,L)\vert$ is homeomorphic to $\vert C(\S'_{\G},\C''_{\G})\vert\times I=M_{\G}\times I$.
\end{proof}

\medskip

It is noteworthy that Theorem~\ref{suspension} extends to the whole class of colored graphs a result obtained in \cite{[FG2]} for the case in which $\widehat M_{\G}$ is a closed manifold. Actually, a graph representing $\Sigma(\widehat M_{\G})$ is easily obtainable from $\G$ by doubling it, namely taking two isomorphic copies of $\G$ and joining the corresponding vertices with an $n$-edge. The importance of the previous result relies in the fact that the graphs $\G$ and $\Sigma_c(\G)$ have the same order.

As a relevant example of the suspension graph construction, Figure~1 shows a 6-colored graph $\G$ which is the double suspension of a 4-colored graph representing the Poincar\`e homology sphere depicted in \cite{[LM]}. By a remarkable result by J. W. Cannon (see \cite{[Ca]}), $\widehat M_{\G}$ is topologically homeomorphic to $S^5$ despite that, with the structure induced by $\C'_{\G}$, it is not PL-homeomorphic to $S^5$. In fact, $|\S_{\G}|$ is not empty and it is homeomorphic to $S^1$.

\begin{figure}[h!]                   
\begin{center}                         
\includegraphics[width=15cm]{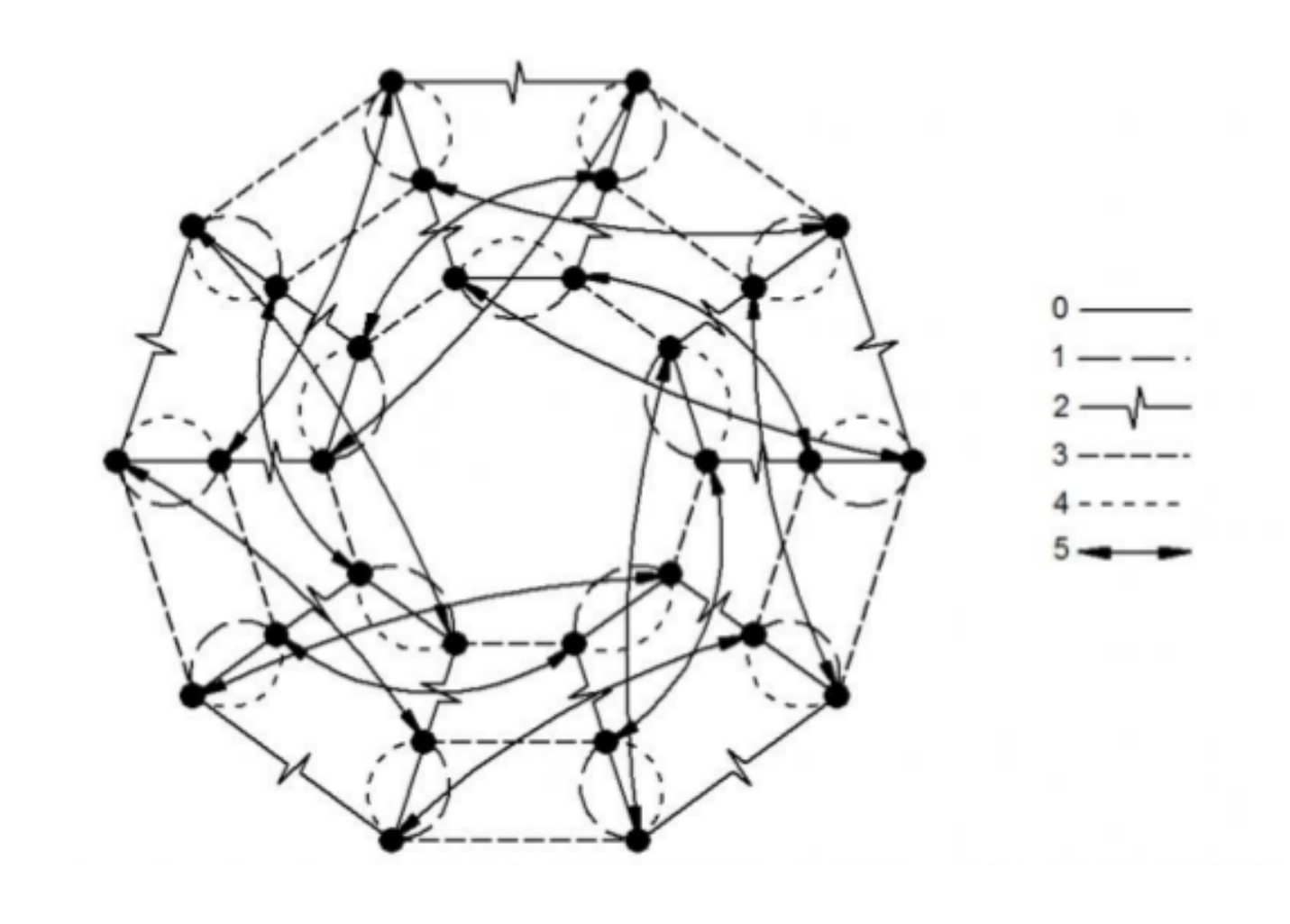}
\caption[legenda elenco figure]{A graph representing the double suspension of the Poincar\`e homology sphere.}\label{fig0}
\end{center}
\end{figure}

\begin{rem}
From the proof of Proposition~\ref{existence} it follows that any compact connected $n$-manifold admits a representation by an $(n+1)$-colored graph where all the singular residues (if any) are $n$-residues. Nevertheless, the representation with general $(n+1)$-colored graphs may be more economical in terms of the order of the representing graph. For example, the 4-manifold $M=S^1\times S^1\times B^2$ can be represented by the order six 5-colored graph $\G'=\Sigma_2(\Sigma_1(\G))$ depicted in Figure~2, where $\G$ is the standard order six graph representing $S^1\times S^1$. But it is easy to realize that $M$ does not admit any representation by 5-colored graphs of order less than 24 and without singular 3-residues, since in this case it would admit a 4-residue representing $\partial (S^1\times S^1\times B^2)=S^1\times S^1\times S^1$, and it is well known that $S^1\times S^1\times S^1$ does not admits representation by 4-colored graphs with less than 24 vertices (see \cite{[Li]}).
\end{rem}

\begin{figure}[h!]                   
\begin{center}                         
\includegraphics[width=10cm]{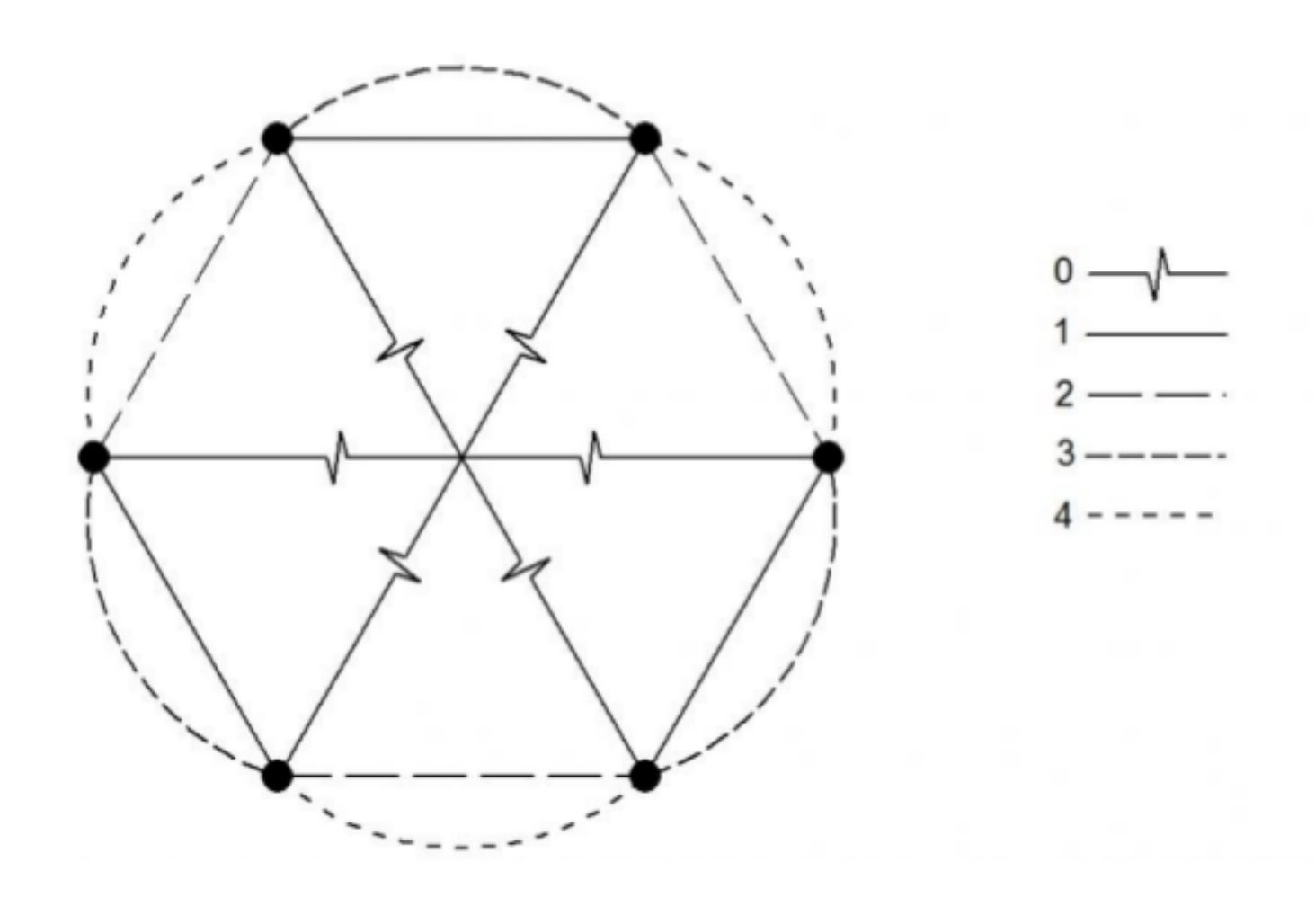}
\caption[legenda elenco figure]{A graph representing $S^1\times S^1\times B^2$.}\label{fig1}
\end{center}
\end{figure}


It is easy to see that an $(n+1)$-colored graph of order two always represents an $n$-sphere, since it is the $(n-2)$-th suspension of the order two bigon, which represents $S^1$. Also for graphs of order 4 the characterization is easy.

\begin{prop} \label{order_4} Let $\G$ be an $(n+1)$-colored graph of order 4, then:
\begin{itemize}
\item[•] (i) $\widehat M_{\G}=M_{\G}=S^n$ if $\G$ is bipartite;
\item[•] (ii) $\widehat M_{\G}=\Sigma^{n-2}(RP^2)$ and $M_{\G}=RP^2\times B^{n-2}$  if $\G$ is non-bipartite. 
\end{itemize}
\end{prop}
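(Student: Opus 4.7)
The plan is to classify the $(n+1)$-colored graphs of order $4$ by the multiplicities with which the three perfect matchings on a $4$-element set appear among the $n+1$ color classes, and then reduce both cases to small base cases via the graph suspension construction of Theorem~\ref{suspension}.

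First I would observe that each of the $n+1$ color classes is a perfect matching on the $4$ vertices of $\G$, and there are exactly three such matchings on $\{1,2,3,4\}$. Let $k_1,k_2,k_3\geq 0$ be the numbers of colors realizing each matching, so that $k_1+k_2+k_3=n+1$. Connectedness of $\G$ forces at least two of the $k_i$ to be positive, while $\G$ is bipartite if and only if exactly one $k_i$ is zero: indeed if all three matchings occur, the edge set of $\G$ contains a copy of $K_4$, hence a triangle. Moreover, whenever two distinct colors $c,c'$ share the same matching, relabeling so that $c'=n$ and deleting the $c'$-edges produces an $n$-colored order-$4$ graph $\G'$ with $\G=\Sigma_c(\G')$.

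Next I would verify the small base cases by direct computation. For non-bipartite $\G$ with $n=2$, necessarily $k_1=k_2=k_3=1$, so the underlying graph is $K_4$ with rainbow edge-coloring; the three pairs of colors yield three bigons (each a $4$-cycle), giving $\chi(\widehat M_\G)=4-6+3=1$. Being non-bipartite, $\widehat M_\G$ is non-orientable by Proposition~\ref{bip}, so the classification of closed surfaces forces $\widehat M_\G\cong RP^2$; since no residues are singular in dimension two, $M_\G=\widehat M_\G$. For bipartite $\G$ with $n=1$, $\G$ is a connected $2$-colored $2$-regular graph on four vertices, i.e., a $4$-cycle, and the construction gives $\widehat M_\G=\G\cong S^1=S^n$.

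Finally, I would close the argument by induction on $n$ using Theorem~\ref{suspension}. In case (i), for $n+1\geq 3$ only two matchings occur, so some $k_i\geq 2$ by pigeonhole and $\G=\Sigma_c(\G')$ for a bipartite $n$-colored order-$4$ graph $\G'$; by induction $\widehat M_{\G'}=S^{n-1}$, and Theorem~\ref{suspension}(i) yields $\widehat M_\G=\Sigma(S^{n-1})=S^n$, whence $M_\G=\widehat M_\G$. In case (ii), for $n+1\geq 4$ pigeonhole again gives some $k_i\geq 2$, and removing a redundant color preserves the presence of all three matchings, so $\G'$ remains non-bipartite of order $4$; the inductive hypothesis $\widehat M_{\G'}=\Sigma^{n-3}(RP^2)$ is never homeomorphic to a sphere (a homology computation shows $\Sigma^k(RP^2)$ has a nontrivial $2$-torsion class in degree $k+1$), so parts (i) and (iii) of Theorem~\ref{suspension} combine to give $\widehat M_\G=\Sigma^{n-2}(RP^2)$ and $M_\G=M_{\G'}\times I\cong RP^2\times B^{n-3}\times I\cong RP^2\times B^{n-2}$. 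The main obstacle I expect is the identification of the non-bipartite $n=2$ base case as $RP^2$, which reduces cleanly to the Euler characteristic computation together with Proposition~\ref{bip} and the surface classification theorem; the subtle remaining points are checking that $\G'$ stays non-bipartite in case (ii) and that $\widehat M_{\G'}$ never degenerates to a sphere, so that Theorem~\ref{suspension}(iii) is always applicable.
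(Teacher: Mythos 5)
Your proposal is correct and follows essentially the same route as the paper: classify the color classes among the three perfect matchings on four vertices, identify the base cases (the $4$-cycle for $S^1$ and the rainbow-colored $K_4$ for $RP^2$), and induct via Theorem~\ref{suspension}. The only difference is that you spell out two points the paper leaves implicit — the Euler-characteristic/orientability identification of the $K_4$ base case as $RP^2$, and the verification via the $2$-torsion of $\Sigma^k(RP^2)$ that the non-sphere hypothesis of Theorem~\ref{suspension}(ii)--(iii) is satisfied at every inductive step — which is a welcome extra bit of care.
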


\begin{proof} Since $\G$ is connected it contains at least a bigon of order 4, and we can suppose, up to isomorphism, that it is a $\{0,1\}$-residue.
If $n=1$ the graph $\G$ is a bigon and $\widehat M_{\G}=M_{\G}=S^1$. If $n>1$ and $\G$ is bipartite, then any pair of $c$-edges, for $c=2,\ldots,n$ is parallel to either the pair of 0-edges or the pair of 1-edges. By Theorem~\ref{suspension} we obtain (i). On the contrary, if $\G$ is not bipartite then it contains a cycle of order three. As a consequence, there is at least a $3$-residue $\L$ of $\G$ (say a $\{0,1,2\}$-residue) which is isomorphic to the complete graph of order 4, and therefore $\widehat M_{\L}=M_{\L}=RP^2$. Then any pair of $c$-edges, for $c=3,\ldots,n$ is parallel to either the pair of 0-edges or the pair of 1-edges or the pair of 2-edges, and the statement follows from Theorem~\ref{suspension}.
\end{proof}

\section{\hskip -0.7cm . Dipole moves}  \label{moves}

Given an $(n+1)$-colored graph $\G$, an \textit{$h$-dipole} ($1\leq h\leq n$) \textit{involving colors} $c_1,\ldots,c_h\in\D$ of $\G$ is a subgraph $\theta$ of $\G$ consisting of two vertices $v'$ and $v''$ joined by exactly $h$ edges, colored by $c_1,\ldots,c_h$, such that $v'$ and $v''$ belong to different $\widehat{\{ c_1,\ldots, c_h\}}$-residues of $\G$.

By \textit{cancelling} $\theta$ \textit{from} $\G$, we mean to remove $\theta$ and to paste together the hanging edges according to their colors, thus obtaining a new $(n+1)$-colored graph $\G'$. Conversely, $\G$ is said to be obtained from $\G'$ by \textit{adding} $\theta$. A \textit{dipole move} is either the cancellation or the addition of a dipole.

An $h$-dipole $\theta$ of $\G$ is called \textit{proper} when $\widehat M_{\G}$ is homeomorphic to $\widehat M_{\G'}$ (see \cite{[G1]}). As a consequence, if two colored graphs $\G_1$ and $\G_2$ are connected by a sequence of proper dipole moves then $\widehat M_{\G_1}=\widehat M_{\G_2}$, as well as $M_{\G_1}=M_{\G_2}$. Furthermore, $\theta$ is called {\it singular} if the two $\widehat{\{ c_1,\ldots, c_h\}}$-residues containing the vertices of $\theta$ are both singular. Otherwise the dipole is called {\it ordinary}.

\begin{prop}\label{proper} \cite{[G1]} Any ordinary dipole of an $(n+1)$-colored graph $\G$ is proper. As a consequence, all $n$- and $(n-1)$-dipoles of $\G$ are proper. 
\end{prop}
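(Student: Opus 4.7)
The plan is to reduce the cancellation of an $h$-dipole $\theta=\{v',v''\}$ to a local PL modification of the dual pseudo-simplicial complex $\K_\G$, and then to read off the consequence from the fact that $1$- and $2$-residues are automatically ordinary. In $\K_\G$ the dipole $\theta$ corresponds to two $n$-simplexes $s_{v'}=s_{\Xi'}\star s_\theta$ and $s_{v''}=s_{\Xi''}\star s_\theta$ sharing the $(n-h)$-face $s_\theta$ dual to the $\{c_1,\ldots,c_h\}$-residue $\theta$, where $s_{\Xi'}$ and $s_{\Xi''}$ are the (distinct) $(h-1)$-simplexes dual to the two $\widehat{\{c_1,\ldots,c_h\}}$-residues $\Xi'\ni v'$ and $\Xi''\ni v''$. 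A direct inspection of the pasting rule on hanging edges shows that cancelling $\theta$ amounts to removing $s_{v'}\cup s_{v''}$ from $\K_\G$ and identifying $s_{\Xi'}$ with $s_{\Xi''}$ via the canonical color-preserving simplicial isomorphism, leaving the rest of $\K_\G$ untouched. So the task is to realize this identification by a PL homeomorphism of the underlying spaces $|\K_\G|$ and $|\K_{\G'}|$.

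Without loss of generality assume $\Xi'$ is ordinary, so $\widehat M_{\Xi'}\cong S^{n-h}$ and the dual cell $c_{\Xi'}\subset\widehat M_\G$ is an $(n+1-h)$-PL ball. By Proposition~\ref{link}, $|\textrm{Link}(V_{\Xi'},\C'_\G)|$ is homeomorphic to $\widehat M_{\Xi'}\star S^{h-2}\cong S^{n-1}$, so $\widehat M_\G$ is locally a PL manifold at $V_{\Xi'}$. A small regular neighborhood $U$ of $c_{\Xi'}\cup c_{\Xi''}$ in $\widehat M_\G$ therefore carries a bicollar along the copy of $c_{\Xi'}$ that separates it from $c_{\Xi''}$. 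Using the bicollar/regular-neighborhood technology (Theorem~5.3 of \cite{[Co1]} together with Lemma~1.22 of \cite{[Hu]}, both already invoked in the paper) I would build a PL homeomorphism between $U$ and the corresponding neighborhood in the cancelled complex, and glue by the identity outside $U$ to obtain $\widehat M_\G\cong\widehat M_{\G'}$. The same local argument keeps $|\S_\G|$ unchanged up to the natural identification, so $M_\G\cong M_{\G'}$ as well; this is essentially the argument of \cite{[G1]} transported to the present setting, which is the natural reference for the detailed verification.

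The main obstacle, and the precise reason for the ordinary hypothesis, is that for $h\ge 2$ the link of $s_\theta$ in $\K_\G$ is the disjoint union $s_{\Xi'}\sqcup s_{\Xi''}$ of two $(h-1)$-balls rather than an $(h-1)$-sphere, so $|\K_\G|$ is not locally PL-manifold along $s_\theta$ and a bare regular-neighborhood argument centered at $s_\theta$ fails; requiring at least one of $\Xi',\Xi''$ to be ordinary is exactly what lets us recenter the argument at $V_{\Xi'}$, where the link is a genuine $(n-1)$-sphere. For the consequence, note that every $n$-dipole has its two $\widehat{\{c_1,\ldots,c_n\}}$-residues equal to $1$-residues (single edges) and every $(n-1)$-dipole has them equal to $2$-residues (bigons); since all $0$-, $1$- and $2$-residues are ordinary, every such dipole is automatically ordinary and hence proper by the first part.
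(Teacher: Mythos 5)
The paper gives no argument for this proposition at all: it is imported wholesale from \cite{[G1]}, so there is no internal proof to measure yours against. Your dual setup is correct ($s_{v'}=s_{\Xi'}\star s_\theta$, $s_{v''}=s_{\Xi''}\star s_\theta$, with $\Xi'$, $\Xi''$ the two $\widehat{\{c_1,\ldots,c_h\}}$-residues), the computation $\vert\textrm{Link}(V_{\Xi'},\C'_{\G})\vert\cong\widehat M_{\Xi'}\star S^{h-2}\cong S^{n-1}$ for $\Xi'$ ordinary is right, and the deduction that $n$- and $(n-1)$-dipoles are proper because $1$- and $2$-residues are always ordinary is exactly the intended consequence. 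But the heart of the statement --- the existence of a PL homeomorphism $\widehat M_{\G}\cong\widehat M_{\G'}$ --- is never constructed: ``I would build a PL homeomorphism\dots'' followed by a deferral to \cite{[G1]} leaves the proposal with the same logical status as the paper's bare citation, while the sketch you interpose contains two concrete errors.

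First, the localization is wrong. The move is supported on the $n$-ball $B=\vert s_{v'}\cup s_{v''}\vert\cong\vert s_\theta\vert\star S^{h-1}$, and what gets identified after deleting $\textrm{int}(B)$ is the pair of $(n-1)$-balls $\partial B\cap\partial\vert s_{v'}\vert=\vert\bar s_{\Xi'}\star\dot s_\theta\vert$ and $\partial B\cap\partial\vert s_{v''}\vert$ (this is precisely how the paper describes the move in the proof of Lemma~\ref{moves singular}), not just $s_{\Xi'}$ with $s_{\Xi''}$. A regular neighborhood of $c_{\Xi'}\cup c_{\Xi''}$ --- two disjoint $(n+1-h)$-dimensional cells, the second of which may pass through singular points of $\widehat M_{\G}$ when $\Xi''$ is singular --- is not a bicollared-hypersurface situation and does not even contain $B$ in any useful way. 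Second, your stated ``precise reason for the ordinary hypothesis'' rests on a false claim: $\textrm{Link}(s_\theta,\K_{\G})$ is not the disjoint union of $s_{\Xi'}$ and $s_{\Xi''}$ but their union glued along their full boundaries (the $h$ facets of $s_{v'}$ and $s_{v''}$ opposite the $c_i$-colored vertices are pairwise identified), hence an $(h-1)$-sphere; so $\vert\K_{\G}\vert$ is a manifold along the open simplex $s_\theta$ for \emph{every} dipole, and no obstruction lives there. The hypothesis is needed elsewhere: one must know that $\vert\textrm{Star}(s_{\Xi'},\K_{\G})\vert\cong\vert s_{\Xi'}\vert\star\widehat M_{\Xi'}$ is an $n$-ball with $s_{\Xi'}$ sitting in it as a join factor (equivalently $\widehat M_{\Xi'}\cong S^{n-h}$), which is what allows the identification of the two $(n-1)$-balls to be absorbed by a homeomorphism supported near that star; when both residues are singular this fails, and indeed the move then changes the topology of $\vert\S_{\G}\vert$ (Lemma~\ref{lemma}). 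Supplying that absorption argument is exactly the missing content.
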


If $\G$ represents a closed $n$-manifold all dipoles of $\G$ are proper and Casali proved in \cite{[C$_1$]} that dipole moves are sufficient to connect different $4$-colored graphs representing the same closed $3$-manifold. This result is no longer true in the case with boundary, even in dimension three (see \cite{[CM]}).

It seems natural to argue that the converse of Proposition~\ref{proper} also holds, but we are able to prove it only for singular manifolds (see Corollary~\ref{singular}). 

\begin{lemma}\label{moves singular} If $\G'$ is obtained from $\G$ by cancelling a dipole $\theta$, then $|\S_{\G'}|$ is homeomorphic to a quotient of $|\S_{\G}|$.
\end{lemma}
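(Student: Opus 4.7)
My plan is to exhibit an order-preserving map $\pi\colon\R(\G)\to\R(\G')$ whose restriction to singular residues is a surjection $\R''(\G)\to\R''(\G')$, and to read the desired homeomorphism off the induced simplicial surjection between the order complexes $\S_\G$ and $\S_{\G'}$. Since $|\S_\G|$ is compact and $|\S_{\G'}|$ is Hausdorff, any continuous surjection between them is automatically a quotient map, giving the statement. The case of an ordinary dipole is settled by Proposition~\ref{proper}, so throughout I would focus on a singular dipole.

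Writing $H=\{c_1,\ldots,c_h\}$ and letting $\O',\O''$ be the $\widehat H$-residues of $\G$ through $v',v''$, I define $\pi$ by a case analysis on $V(\L)\cap\{v',v''\}$. Residues with $V(\L)\subseteq\{v',v''\}$ --- namely the dipole vertices and the sub-dipoles $\L_A$ for $\emptyset\neq A\subseteq H$ --- represent spheres, are ordinary, and disappear in $\G'$, so they never enter $\S_\G$. Residues disjoint from $\{v',v''\}$ survive unchanged; set $\pi(\L)=\L$. If $\{v',v''\}\subsetneq V(\L)$, then $A:=\Delta_\L\cap H$ is forced to be nonempty (otherwise $v',v''$ would lie in distinct $\Delta_\L$-components of $\L$, since they sit in the distinct $\O',\O''$), and $\theta\cap\L$ is an $|A|$-dipole of $\L$ whose dipole condition in $\L$ is inherited from that of $\theta$ in $\G$; define $\pi(\L)$ by cancelling $\theta\cap\L$ in $\L$. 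If $V(\L)\cap\{v',v''\}$ is a singleton, then $\Delta_\L\subseteq\widehat H$ and $\L$ comes with a unique partner $\L^\ast$, the $\Delta_\L$-residue through the other dipole vertex, which the cancellation merges with $\L$ into a single residue of $\G'$ along the concatenated $\widehat H$-hanging edges; put $\pi(\L)=\pi(\L^\ast)$ equal to this merged residue.

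The crucial step is showing that $\pi$ both preserves ordinariness and preserves singularity, from which the surjection $\R''(\G)\to\R''(\G')$ follows. For ordinariness in the $\{v',v''\}\subsetneq V(\L)$ case: if $\widehat M_\L$ is a sphere then by the consequence of Corollary~\ref{ferri} every sub-residue of $\L$ is ordinary, so $\theta\cap\L$ is an ordinary --- hence proper --- dipole of $\L$ by Proposition~\ref{proper}, and $\widehat M_{\pi(\L)}\cong\widehat M_\L$ remains a sphere. In the partner case with both $\L,\L^\ast$ ordinary, $V_{v'}$ and $V_{v''}$ have spherical links in $\widehat M_\L$ and $\widehat M_{\L^\ast}$ by Proposition~\ref{link}, so removing small cone neighborhoods leaves two closed $(|\Delta_\L|-1)$-balls; the color-preserving identification of hanging edges prescribed by the dipole glues them along their common boundary sphere, and PL Schoenflies yields a sphere. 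For singularity, I would induct on $|\Delta_\L|$: the base case $|\Delta_\L|=3$ is direct because dipoles in a $3$-colored graph are automatically ordinary (bigons are spheres) and hence proper, while partner-merging preserves non-sphericity (a connected sum involving a non-sphere surface is non-sphere); for larger $|\Delta_\L|$, a singular $\L$ admits a singular sub-residue $\Xi$ by Corollary~\ref{ferri}, which by induction has singular image $\pi(\Xi)$, and by order-preservation $\pi(\Xi)$ is a sub-residue of $\pi(\L)$, forcing $\pi(\L)$ to be singular.

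Order-preservation of $\pi$ is immediate from its local nature (restriction, cancellation, and merging all respect sub-residue inclusion), so $\pi$ induces a simplicial surjection $\phi\colon\S_\G\to\S_{\G'}$ that identifies precisely the partner pairs $V_\L,V_{\L^\ast}$ together with the simplices they generate. Chains of singular residues of $\G'$ lift to chains in $\G$ by a top-down selection, at each step using the natural correspondence between sub-residues of $\pi(\L)$ and sub-residues of $\L$ or its partner. The main obstacle will be the partner-merging step in the preservation of ordinariness: one must verify that the gluing of the two closed balls prescribed by dipole cancellation coincides with the standard boundary identification (so that PL Schoenflies applies), which amounts to tracing through the link picture of Proposition~\ref{link} and checking that the color-preserving bijection between $\widehat H$-hanging edges at $v'$ and $v''$ yields the canonical gluing of the two boundary spheres.
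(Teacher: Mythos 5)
Your overall strategy is essentially the paper's, rendered combinatorially: the paper's proof observes that the cancellation removes from $\widehat M_{\G}$ an open ball disjoint from $\vert\S_{\G}\vert$ (the only cone vertices inside it come from $\theta$ and its sub-residues, all ordinary) and then re-glues two boundary balls by pairing simplices $\langle V_{\L'_0},\ldots,V_{\L'_k}\rangle\leftrightarrow\langle V_{\L''_0},\ldots,V_{\L''_k}\rangle$ of equally-colored residues through $v'$ and $v''$, so that $\vert\S_{\G'}\vert$ is obtained from $\vert\S_{\G}\vert$ by performing only those identifications involving simplices of $\S_{\G}$. Your map $\pi$ on residues is exactly the combinatorial shadow of that identification, and the compact-to-Hausdorff remark correctly upgrades the induced surjection to a quotient map. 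What you add — and what the paper leaves implicit — is an explicit proof that the identification preserves ordinariness and singularity; that is where your argument has a genuine gap.

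The gap is in the inductive step for preservation of singularity: you assert that a singular residue $\L$ with $\vert\Delta_{\L}\vert>3$ admits a singular proper sub-residue ``by Corollary~\ref{ferri}''. That corollary produces a singular sub-residue only when $\widehat M_{\L}$ fails to be a closed \emph{manifold}; a residue with $\widehat M_{\L}$ a closed manifold that is not a sphere (e.g.\ a lens space, or $S^1\times S^1\times S^1$) is singular while \emph{all} of its proper sub-residues are ordinary. By Lemma~\ref{singular manifold} this is exactly the situation for every singular residue whenever $\widehat M_{\G}$ is a singular manifold — i.e.\ precisely the setting in which this lemma is later applied (Corollary~\ref{singular}, Proposition~\ref{partial}) — so your induction fails in the main case of interest. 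The repair is a dichotomy: if $\L$ has a singular proper sub-residue, your induction applies; if not, then the two $(\Delta_{\L}-A)$-residues at $v',v''$ are ordinary, so $\theta\cap\L$ is an ordinary, hence proper, dipole of $\L$ and $\widehat M_{\pi(\L)}\cong\widehat M_{\L}$ remains non-spherical, while in the partner-merging case one must additionally argue that a connected sum with a closed non-sphere summand cannot be a sphere (a Mazur-swindle type argument, your surface computation being the $3$-colored instance). Two smaller points: the ``main obstacle'' you flag at the end is not one, since two PL balls glued along their boundaries by \emph{any} PL homeomorphism form a sphere, so no compatibility with a canonical gluing is needed; and the step that actually requires care there is that the closure of the complement of the open star of $V_{v'}$ in the sphere $\widehat M_{\L}$ is a ball, which is Newman's theorem rather than Schoenflies and holds in all dimensions.
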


\begin{proof} Topologically, the effect of the cancellation of the $h$-dipole $\theta$ involving colors $c_1,\ldots,c_h$ is the following: $\widehat M_{\G'}$ is obtained from $\widehat M_{\G}$ by removing the open $n$-ball $\textrm{int}(B)$ where $B=\vert\textrm{Star}(v',\C'_{\G})\cup\textrm{Star}(v'',\C'_{\G})\vert$ and afterthat attaching the $(n-1)$-ball $\partial B\cap\partial\vert \textrm{Star}(v',\C'_{\G})\vert$ with the $(n-1)$-ball $\partial B\cap\partial\vert \textrm{Star}(v'',\C'_{\G})\vert$ via the identification of any simplex $\s'=\langle V_{\L'_0}, V_{\L'_1},\ldots,V_{\L'_k}\rangle$ with the simplex $\s''=\langle V_{\L''_0}, V_{\L''_1},\ldots,V_{\L''_k}\rangle$, where $\L'_i$ and $\L''_i$ are $\Delta_{(i)}$-residues containing $v'$ and $v''$ respectively, with $ \emptyset\neq\Delta_{(0)}\not\subseteq\{ c_1,\ldots, c_h\}$, for $i=0,1,\ldots,k$. 
The only vertices of $\C'_{\G}$ belonging to $\textrm{int}(B)$ are the cone vertices associated either to $\theta$ or to a residue of $\theta$. Since $\widehat M_{\theta}$ is a sphere, these vertices do not belong to $\S_{\G}$. As  a consequence,  $\textrm{int}(B)\cap\vert\S_{\G}\vert=\emptyset$ and $|\S_{\G'}|$ is obtained from $|\S_{\G}|$ just by considering the attachings of $\s'$ with $\s''$ when they are both simplexes of $\S_{\G}$.
\end{proof}

\medskip

\begin{lemma}\label{lemma} Let $\theta$ be a singular $h$-dipole involving colors $c_1,\ldots,c_h$ of an $(n+1)$-colored graph $\G$. If for any $\Delta\subset \widehat{\{ c_1,\ldots, c_h\}}$ at least one of the two $\Delta$-residues containing the vertices of $\theta$ is ordinary, then $\theta$ is not proper.
\end{lemma}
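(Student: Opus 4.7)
The plan is to prove the non-properness of $\theta$ by exhibiting a discrepancy in the Euler characteristic of the singular set, which is a topological invariant of the underlying quasi-manifold. If $\theta$ were proper we would have $|\S_{\G}| \cong |\S_{\G'}|$ and hence $\chi(|\S_{\G}|)=\chi(|\S_{\G'}|)$; I shall show this fails. By Lemma~\ref{chi} this reduces to a careful comparison of the singular residue counts $|\R''_k(\G)|$ and $|\R''_k(\G')|$.

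The key step is the case $\Delta=\widehat{\{c_1,\ldots,c_h\}}$, where the two singular residues $\L_1$ and $\L_2$ merge in $\G'$ into a single residue $\L$. Because $v',v''$ are $0$-residues (and therefore ordinary), Proposition~\ref{link} shows they correspond to regular points of $\widehat M_{\L_1}$ and $\widehat M_{\L_2}$, so the graph merging realises the topological connected sum $\widehat M_\L\cong\widehat M_{\L_1}\#\widehat M_{\L_2}$ at regular points. Since the sum is at regular points, $|\S_\L|=|\S_{\L_1}|\sqcup|\S_{\L_2}|\neq\emptyset$, so $\L$ is singular in $\G'$, and the count of singular $(n+1-h)$-residues decreases by exactly one.

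Next I check that every other color set $\Delta$ with $|\Delta|\geq 3$ produces no net change in singular residue counts. For $\Delta\subsetneq\widehat{\{c_1,\ldots,c_h\}}$ the two $\Delta$-residues of $v',v''$ merge in $\G'$ as in the previous case, but by hypothesis one of them is ordinary, hence a sphere, so its connected sum with the other is trivial and the singular count is preserved. For $\Delta$ containing some dipole color with $\Delta\subsetneq\D$, there is a unique $\Delta$-residue $R$ of $\G$ containing both $v',v''$, modified into $R'$ in $\G'$; when $\Delta\supseteq\widehat{\{c_1,\ldots,c_h\}}$ both $R$ and $R'$ contain singular sub-residues (namely $\L_1,\L_2$ in $R$ and $\L$ in $R'$), hence are singular by Corollary~\ref{ferri}, preserving the count; otherwise $\Delta\cap\widehat{\{c_1,\ldots,c_h\}}\subsetneq\widehat{\{c_1,\ldots,c_h\}}$, so the hypothesis makes the sub-dipole of $\theta$ inside $R$ ordinary and thus proper within $R$ by Proposition~\ref{proper}, giving $\widehat M_R\cong\widehat M_{R'}$.

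Combining, $|\R''_k(\G)|-|\R''_k(\G')|=\delta_{k,n+1-h}$, so Lemma~\ref{chi} yields
\[
\chi(|\S_{\G'}|)-\chi(|\S_{\G}|)=(-1)^{n-(n+1-h)}\cdot(-1)=(-1)^{h}\neq 0,
\]
the desired contradiction. The main obstacle is the identification used above between the graph-level merging of two residues at $0$-residue vertices and the topological connected sum of the associated quasi-manifolds at regular points, together with the facts that connected sum with a sphere is trivial and that a connected sum of two singular quasi-manifolds at regular points remains singular. Both assertions should follow from a local analysis based on Proposition~\ref{link}: the closed star of a $0$-residue in $\C'_{\G}$ is a cone over a sphere, hence a PL-ball, so removing its interior on each side and gluing along the boundary sphere is exactly the required connected-sum operation.
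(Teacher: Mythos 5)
Your overall strategy coincides with the paper's: both proofs distinguish $\widehat M_{\G}$ from $\widehat M_{\G'}$ by showing that $\chi(|\S_{\G}|)\neq\chi(|\S_{\G'}|)$, and both arrive at the same discrepancy $(-1)^h$. The execution, however, is genuinely different. The paper first establishes (Lemma~\ref{moves singular}) that $|\S_{\G'}|$ is a quotient of $|\S_{\G}|$, observes that under the hypothesis the only identifications surviving inside $\S_{\G}$ are those between the simplexes issuing from the two singular $\widehat{\{c_1,\ldots,c_h\}}$-residues, and reads off the jump of $\chi$ from the resulting gluing of two $(h-1)$-balls along their common boundary. You instead track the counts $|\R''_k|$ residue by residue and feed them into Lemma~\ref{chi}. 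Your case analysis (the two top-level singular residues merge into one, dropping the count by one; residues with colors properly contained in $\widehat{\{c_1,\ldots,c_h\}}$ merge with a sphere, preserving the count; residues meeting the dipole colors either inherit singularity from their sub-residues or are unchanged because the induced sub-dipole is ordinary, hence proper) is correct and complete, and your identification of the graph-level merging with the connected sum of Section~\ref{sum}, performed at the ordinary $0$-residues $v',v''$ whose links are spheres by Proposition~\ref{link}, is sound. This combinatorial route is a legitimate alternative and makes the role of the hypothesis quite transparent.

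One justification is wrong as written. To show that the merged residue $\L$ is singular you argue that $|\S_{\L}|=|\S_{\L_1}|\sqcup|\S_{\L_2}|\neq\emptyset$. This conflates ``singular residue'' with ``residue having non-empty singular set'': a residue $\L_i$ is singular when $\widehat M_{\L_i}$ fails to be a sphere, and $\widehat M_{\L_i}$ may perfectly well be a closed manifold (e.g.\ a torus, for a singular $3$-residue), in which case $\S_{\L_1}=\S_{\L_2}=\emptyset$ and your argument gives nothing. What you actually need is that a connected sum of two non-spheres performed at manifold points is again not a sphere --- a standard PL fact, but not one that follows from a link computation; the paper relies on the same fact implicitly when, via Lemma~\ref{moves singular}, it treats the image of $V_{\L'_0}$ and $V_{\L''_0}$ as a vertex of $\S_{\G'}$. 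Note as a safety net that even if $\L$ were ordinary your count at level $n+1-h$ would drop by $2$ instead of $1$, so the Euler characteristics would still differ and the lemma would survive; but the precise formula $\chi(|\S_{\G'}|)-\chi(|\S_{\G}|)=(-1)^h$ does require $\L$ to be singular, so this step should be repaired rather than waved at via Proposition~\ref{link}.
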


\begin{proof} Let $v',v''$ be the endpoints of $\theta$ and let $\G'$ be the $(n+1)$-colored graph obtained from $\G$ by cancelling $\theta$. 
Then $\S_{\G'}$ is obtained from $\S_{\G}$ by attaching each simplex $\langle V_{\L'_0}, V_{\L'_1},\ldots,V_{\L'_k}\rangle$ with the simplex $\langle V_{\L''_0}, V_{\L''_1},\ldots,V_{\L''_k}\rangle$, where $\L'_0$ and $\L''_0$ are the $\widehat{\{ c_1,\ldots, c_h\}}$-residues containing $v'$ and $v''$ respectively. Topologically, the operation consists in the attaching of two $(h-1)$-balls with common boundary. So $\chi(|\S_{\G'}|)=\chi(|\S_{\G}|)+(-1)^{h}$ and therefore $\vert\S_{\G'}\vert$ is not homeomorphic to $\vert\S_{\G}\vert$. As a consequence, $\widehat M_{\G'}$ is not homeomorphic to $\widehat M_{\G}$ and the dipole is not proper.
\end{proof}


\begin{cor}\label{singular} Let $\G$ be an $(n+1)$-colored graph such that $\widehat M_{\G}$ is a singular manifold, then a dipole of $\G$ is proper if and only if it is ordinary.
\end{cor}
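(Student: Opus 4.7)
The plan is to combine Proposition~\ref{proper} with Lemma~\ref{lemma}, using the extra structural information that $\widehat M_{\G}$ being a singular manifold imposes very strong restrictions on where singular residues can live.

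One direction is immediate: if the dipole is ordinary, then by Proposition~\ref{proper} it is proper. So the content of the corollary lies in the converse, and my strategy is to show that \emph{every} singular dipole of $\G$ automatically satisfies the hypothesis of Lemma~\ref{lemma}, which then forces non-properness.

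First I would invoke Lemma~\ref{singular manifold}: since $\widehat M_{\G}$ is a singular manifold, $\R''_k(\G) = \emptyset$ for every $k < n$, so the only singular residues of $\G$ are $n$-residues. Now let $\theta$ be a singular $h$-dipole involving colors $c_1,\ldots,c_h$, with endpoints $v',v''$. By definition the two $\widehat{\{c_1,\ldots,c_h\}}$-residues containing $v'$ and $v''$ are both singular; but such a residue has $n+1-h$ colors, so the singular manifold hypothesis forces $n+1-h=n$, i.e.\ $h=1$. Thus under our assumption every singular dipole is a $1$-dipole between two singular $n$-residues.

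Next I would verify the hypothesis of Lemma~\ref{lemma}. With $h=1$, any proper subset $\Delta\subsetneq\widehat{\{c_1\}}$ satisfies $|\Delta|\le n-1$, so the $\Delta$-residues through $v'$ and $v''$ have at most $n-1$ colors. Since singular residues can only be $n$-residues, both such $\Delta$-residues are automatically ordinary, so in particular at least one of them is ordinary, as required by the lemma. Applying Lemma~\ref{lemma} then gives that $\theta$ is not proper, which completes the contrapositive.

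I do not foresee a real obstacle: everything boils down to the observation that "singular manifold" rules out singular residues of dimension less than $n$, which instantly collapses a singular dipole to a $1$-dipole between two singular top residues and trivializes the side condition of Lemma~\ref{lemma}. The only care needed is bookkeeping of color-set sizes ($\widehat{\{c_1,\ldots,c_h\}}$-residues have $n+1-h$ colors) and making the $h=1$ reduction explicit before quoting Lemma~\ref{lemma}.
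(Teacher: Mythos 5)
Your proof is correct and follows essentially the same route as the paper's: invoke Lemma~\ref{singular manifold} to conclude that all singular residues are $n$-residues, observe that this forces a singular dipole to be a $1$-dipole whose proper sub-residues are all ordinary, and then apply Lemma~\ref{lemma} (with Proposition~\ref{proper} giving the easy direction). The paper compresses this into one line, but the content is identical; your explicit bookkeeping of the color-set sizes is a faithful expansion of that argument.
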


\begin{proof} By Lemma~\ref{singular manifold} any singular residue of $\G$ is an $n$-residue and contains no singular residues, therefore Lemma~\ref{lemma} holds.
\end{proof}

\medskip

For the manifold with boundary $M_{\G}$ we have the following consequence of the previous results.

\begin{prop}\label{partial} Let $\G$ be an $(n+1)$-colored graph such that $\widehat M_{\G}$ is a singular manifold, and let $\G'$ be the graph obtained from $\G$ by cancelling a dipole $\theta$. Then $M_{\G'}=M_{\G}$ if and only if $\theta$ is ordinary.
\end{prop}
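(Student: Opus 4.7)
The plan is to prove each implication separately, with the backward direction immediate and the forward direction reducing to a count of boundary components. For the ``if'' direction I would invoke Proposition~\ref{proper}: an ordinary dipole is proper, so $\widehat M_{\G'}=\widehat M_{\G}$ and therefore $M_{\G'}=M_{\G}$.

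For the ``only if'' direction assume $\theta$ is singular; the key point is to upgrade Lemma~\ref{lemma} from a statement about $\widehat M_{\G}$ into one about $M_{\G}$. I would first observe that because $\widehat M_{\G}$ is a singular manifold, Lemma~\ref{singular manifold} forces every singular residue of $\G$ to be an $n$-residue. Since the two $\widehat{\{c_1,\ldots,c_h\}}$-residues containing the endpoints of $\theta$ are singular by assumption, they must be $n$-residues, hence $h=1$. For every proper $\Delta\subsetneq\widehat{\{c\}}$ one has $|\Delta|<n$, so both $\Delta$-residues at the endpoints are automatically ordinary; the hypothesis of Lemma~\ref{lemma} is thus satisfied, and the Euler-characteristic computation inside its proof delivers $\chi(|\S_{\G'}|)=\chi(|\S_{\G}|)-1$.

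The final step is to convert this Euler-characteristic shift into a difference in the number of boundary components. Since $\widehat M_{\G}$ is a singular manifold, $|\S_{\G}|$ is a finite discrete set, one point per element of $\R''_n(\G)$, and (as recorded just after Proposition~\ref{manifold2}) its cardinality equals the number of boundary components of $M_{\G}$. By Lemma~\ref{moves singular}, $|\S_{\G'}|$ is a quotient of the discrete space $|\S_{\G}|$, hence also discrete; therefore $\chi(|\S_{\G'}|)$ equals the number of its points and hence the number of boundary components of $M_{\G'}$. Combining, $M_{\G'}$ has exactly one fewer boundary component than $M_{\G}$, so $M_{\G'}\neq M_{\G}$.

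The main obstacle I anticipate is exactly this bridging step: Lemma~\ref{lemma} lives at the level of $\widehat M_{\G}$, and Lemma~\ref{moves singular} only describes $|\S_{\G'}|$ as an abstract quotient, whereas the statement concerns the manifolds $M_{\G}$ and $M_{\G'}$. Translating between the two relies crucially on the singular-manifold hypothesis, which kills every singular residue of dimension below $n$ and so forces $|\S_{\G}|$ to be zero-dimensional; without this simplification the Euler-characteristic shift would not translate so directly into a change in the count of connected components.
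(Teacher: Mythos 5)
Your proof is correct and follows essentially the same route as the paper's: the ordinary case is dispatched by Proposition~\ref{proper}, and in the singular case the hypothesis that $\widehat M_{\G}$ is a singular manifold forces $\theta$ to be a $1$-dipole, after which one checks that the number of boundary components drops by one. The paper phrases this last step directly as $|\R''_n(\G')|<|\R''_n(\G)|$ with these quantities counting boundary components, while you reach the same count via the Euler-characteristic computation from the proof of Lemma~\ref{lemma} together with the discreteness of $|\S_{\G}|$; the two bookkeepings are equivalent.
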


\begin{proof} If $\theta$ is ordinary then $\widehat M_{\G'}=\widehat M_{\G}$ by Proposition~\ref{proper} and therefore $M_{\G'}=M_{\G}$. If $\theta$ is singular then it is a 1-dipole and consequently $|\R''_n(\G')|<|\R''_n(\G)|$. Since in this case $|\R''_n(\G)|$ (resp. $|\R''_n(\G')|$) is exactly the number of boundary components of $M_{\G}$ (resp. of $M_{\G'}$), then $M_{\G'}\neq M_{\G}$. 
\end{proof}

\medskip

A vertex $v\in V(\G)$ is called an {\it internal vertex} if all $n$-residues containing $v$ are ordinary, otherwise it is called a {\it boundary vertex}. The {\it index} of $v$ is the number of singular $n$-residues of $\G$ containing $v$. So an internal vertex has index 0 and a boundary vertex has index $r$, with $1\le r\le n+1$. 

Some useful properties follow from the previous results.

\begin{lemma} \label{no-dipole} Let $M$ be a compact connected $n$-manifold without spherical boundary components, then:
\begin{itemize}
\item[•] (i) $M$ can be represented by an $(n+1)$-colored graph with no ordinary dipoles;
\item[•] (ii)  $M$ can be represented by an $(n+1)$-colored graph with at least one internal vertex.
\item[•] (iii) if $\partial M\neq\emptyset$ then $M$ can be represented by an $(n+1)$-colored graph with at least one boundary vertex of index one.
\end{itemize}
\end{lemma}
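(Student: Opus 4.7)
For (i), the plan is immediate: start with any $(n+1)$-colored graph $\G_0$ representing $M$ (provided by Proposition~\ref{existence}) and iteratively cancel ordinary dipoles. By Proposition~\ref{proper} each such cancellation is proper, so it preserves $\widehat M_\G$ and hence also $M_\G$, while decreasing the vertex count by two; the procedure must terminate, leaving a graph with no ordinary dipoles that still represents $M$. No obstacle is anticipated in this step.

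For (ii) and (iii), I would deal simultaneously with both by refining the triangulation. The case $\partial M=\emptyset$ is immediate: every graph representing a closed manifold has only ordinary residues, so all vertices are internal and (iii) is vacuous. Assuming $\partial M\neq\emptyset$, the starting point is to invoke Proposition~\ref{existence} to obtain a graph $\G$ with $\widehat M_\G$ equal to the singular manifold $\widehat M$ obtained from $M$ by coning off each boundary component; by Lemma~\ref{singular manifold} the singular residues of $\G$ are precisely its singular $n$-residues, in bijection with the cone points of $\widehat M$. I would then replace $\G$ by the $(n+1)$-colored graph $\tilde\G$ dual (in the sense of Pezzana) to the first barycentric subdivision $\K'_\G$ of the pseudo-simplicial complex $\K_\G$; since $\K'_\G$ is a genuine simplicial complex, balanced by coloring each vertex---the barycenter of a simplex $\sigma$ of $\K_\G$---by $\dim\sigma\in\{0,1,\ldots,n\}$, this dual construction is well-defined and yields $\widehat M_{\tilde\G}\cong\widehat M$ and $M_{\tilde\G}=M$.

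The decisive observation is that every $n$-simplex of $\K'_\G$ contains exactly one vertex of each color, in particular a unique vertex of color $0$, which is necessarily a $0$-simplex of $\K_\G$; and the singular $0$-simplices of $\K'_\G$ are exactly the cone points of $\widehat M$ (all colored $0$, since their links are the non-spherical boundary components of $M$). Consequently every vertex of $\tilde\G$ has index $0$ or $1$ according as this color-$0$ vertex is ordinary or a cone point. Both cases occur: each cone point lies in some $n$-simplex of $\K'_\G$ (producing a boundary vertex of index exactly $1$, proving (iii)), while only finitely many $0$-simplices of $\K_\G$ are cone points and $M$ has non-empty interior triangulated by $\K_\G$ (producing $n$-simplices whose color-$0$ vertex is ordinary, hence internal vertices of $\tilde\G$, proving (ii)). The one step requiring care is the verification that Pezzana's duality, applied to $\K'_\G$ with the dimension-coloring, correctly matches singular $n$-residues of $\tilde\G$ to cone points of $\widehat M$; this reduces, via Proposition~\ref{link}, to the fact that the link of a vertex of $\K'_\G$ is, up to subdivision, the link of the corresponding simplex of $\K_\G$, so that barycentric subdivision introduces no new singular vertices. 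This is the main, essentially routine, technical point to check.
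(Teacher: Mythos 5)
Your part (i) is exactly the paper's argument. For (ii) and (iii) you take a genuinely different route: the paper works locally inside the graph, taking a boundary vertex of minimal index $r$ and adding an $n$-dipole along a $c$-edge for a color $c$ whose $\widehat c$-residue is singular; the new $\widehat c$-residue is the standard sphere, so the two new vertices have index $r-1$, and induction on $r$ produces an internal vertex (resp.\ a vertex of index one). You instead re-triangulate globally, passing to $\K'_{\G}$ with the dimension coloring and dualizing. Your argument for (iii) is sound: the singular vertices of $\K'_{\G}$ are exactly the singular $0$-simplexes of $\K_{\G}$ (barycentric subdivision creates no new ones), they all carry color $0$, each $n$-simplex of $\K'_{\G}$ has exactly one color-$0$ vertex, so every vertex of $\tilde\G$ has index $0$ or $1$, and any flag through a cone point gives index exactly $1$.

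The gap is in (ii). You need an $n$-simplex of $\K'_{\G}$ whose color-$0$ vertex is ordinary, i.e.\ at least one \emph{ordinary} $0$-simplex of $\K_{\G}$. The justification offered --- only finitely many $0$-simplexes are cone points, and $M$ has non-empty interior --- does not give this: $\K_{\G}$ is a finite complex, and nothing in your argument prevents \emph{all} of its $0$-simplexes from being cone points. This happens precisely when every $n$-residue of $\G$ is singular, which is perfectly possible; for instance, for $n=3$ a supercontracted $4$-colored graph whose four $3$-residues are all tori (such graphs represent complements of $4$-component links) has $\K_{\G}$ with exactly four vertices, every one of them a cone point. For such a $\G$ your $\tilde\G$ has every vertex of index exactly $1$ and no internal vertex at all. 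The repair is cheap --- take one further barycentric subdivision, so that the color-$0$ vertices become the $0$-simplexes of $\K'_{\G}$, among which the barycenters of the $n$-simplexes of $\K_{\G}$ are always ordinary; or fall back on the dipole-addition argument --- but as written the step asserting the existence of an internal vertex does not follow.
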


\begin{proof} Let $\G$ be an $(n+1)$-colored graph representing $M$.

(i) If $\G$ has an ordinary dipole $\theta$, then the dipole is proper and by cancelling it we obtain a new $(n+1)$-colored graph still representing $M$. A finite sequence of such cancellations of ordinary dipoles obviously yields an $(n+1)$-colored graph representing $M$ and without ordinary dipoles.

(ii, iii) If $\partial M=\emptyset$ there is nothing to prove. Otherwise, let $v$ be a boundary vertex with minimal index $r>0$ and let $c\in\D$ be such that the $\widehat c$-residue containing $v$ is singular. By adding an $n$-dipole along the $c$-edge
containing $v$ we obtain two new vertices, $v'$ and $v''$, which
are both singular of order $r-1$. In fact, the $\widehat c$-residue
containing them is obviously ordinary (it is the standard $n$-colored graph representing $S^{n-1}$), and for each $d\in\widehat c$ any $\widehat d$-residue containing them is singular if and only if the $\widehat d$-residue
containing $v$ in $\G$ is singular. So by induction on $r$ we can obtain an internal vertex (resp. a boundary vertex of index one) in not more than $r$ steps (resp. $r-1$ steps).
\end{proof}

\medskip


It is important to note that, for getting a minimal representation of a manifold in terms of the order of the representing graph, we can consider only colored graphs without ordinary dipoles.

\section{\hskip -0.7cm . Fundamental group} 

If $\G$ is an $(n+1)$-colored graph, with $n>1$, then the fundamental group of the
manifold $M_{\G}$ coincides with the fundamental group of the
associated 2-dimensional polyhedron $\G^{(2)}$, since $M_{\G}$ is obtained by attaching to $\G^{(2)}$ pieces which are retractable (in virtue of Remark~\ref{cylinder}) and $h$-balls, for $3\le h\le n$.
Therefore, the computation of $\pi_1(M_{\G})$ is a routine fact: a finite presentation for it has generators corresponding to the edges which are not in a fixed spanning tree of $\G$ and relators corresponding to all 2-residues of $\G$. The fundamental group of the quasi-manifold $\widehat M_{\G}$ is a quotient of the one of $M_{\G}$, since retractable pieces are replaced by cones which kill some elements of $\pi_1(M_{\G})$.

In several cases the two groups can be obtained by selecting a particular class of edges and 2-residues, as follows.
If $c\in\D$, define the \textit{$c$-group} of $\G$ as
the group $\pi(\G,c)$ generated by all $c$-edges (with a fixed arbitrary
orientation) and with relators corresponding to all $\{i,c\}$-residues, for any
$i\in \widehat c$, obtained in the following way: give an orientation to each involved 2-residue, choose a starting vertex and follow the bigon according
to the chosen orientation. The relator is obtained by taking the $c$-edges of
the bigon in the order they are reached in the path, with the
exponent $+1$ or $-1$ according to whether the orientation of the
edge is coherent or not with the one of the bigon.

In general $\pi(\G,c)$ depends on $c$, but when $c$ is an ordinary color\footnote{A color $c\in\D$ is called {\it ordinary} if $\G$ has no singular $\widehat c$-residues, otherwise $c$ is called {\it singular}.} the group is strictly connected with the fundamental group
of $M_{\G}$ (see \cite{[Li]} for closed 3-manifolds and \cite{[Gr]} for closed $n$-manifolds).

\begin{prop} \label{groupM} Let $\G$ be an $(n+1)$-colored graph and $c$ be an ordinary color for $\G$. Then $\pi_1(M_{\G})$ is the quotient of $\pi(\G,c)$,
obtained by adding to the relators a minimal set of $c$-edges which connect 
the graph $\G_{\widehat c}$.
\end{prop}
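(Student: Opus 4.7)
The plan is to identify $\pi_1(M_\G)$ with the fundamental group of a reduced $2$-complex obtained from $\G^{(2)}$ by collapsing each $\widehat c$-residue to a single point, and then to read off a presentation. As observed in the paragraph preceding the statement, $\pi_1(M_\G)=\pi_1(\G^{(2)})$, so the entire computation takes place on the $2$-skeleton.

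For each $\widehat c$-residue $\L\in\R_n(\G)$ I would introduce a connected and simply connected subcomplex $A_\L\subseteq\G^{(2)}$. If $n\ge 3$, set $A_\L=\L^{(2)}$: since $c$ is ordinary, $\widehat M_\L\cong S^{n-1}$ by Corollary~\ref{ferri}, and since $\pi_1$ is insensitive to cells of dimension $\ge 3$, $\pi_1(A_\L)=\pi_1(\widehat M_\L)=\pi_1(S^{n-1})=1$. If $n=2$, $\L$ is itself a bigon and its cone cell $c_\L$ already lies in $\G^{(2)}=\widehat M_\G$; set $A_\L=\L\cup c_\L$, which is a $2$-disk. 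The $\widehat c$-residues are pairwise disjoint subgraphs of $\G$, and every $2$-cell corresponding to a bigon with both colors in $\widehat c$ belongs to exactly one such residue, so the family $\{A_\L\}$ consists of pairwise disjoint subcomplexes.

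Let $\widetilde X$ be the quotient of $\G^{(2)}$ obtained by collapsing each $A_\L$ to a point. The key step is that the quotient map induces an isomorphism on $\pi_1$: attaching a cone $CA_\L$ to $\G^{(2)}$ along each $A_\L$ yields a space homotopy equivalent to $\widetilde X$, and an iterated application of van Kampen shows that each such cone attachment quotients the current fundamental group by the normal closure of the image of $\pi_1(A_\L)=1$, hence leaves it unchanged. Consequently $\pi_1(\widetilde X)=\pi_1(M_\G)$. The induced cell structure on $\widetilde X$ has one $0$-cell per $\widehat c$-residue, one $1$-cell per $c$-edge of $\G$ (with endpoints the residues containing its two endpoints in $\G$), and one $2$-cell per $\{i,c\}$-residue for $i\in\widehat c$, attached along the cycle of $c$-edges read off the corresponding bigon in the order they appear (the $i$-edges alternating with them having been crushed inside the $A_\L$'s).

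It remains to recognise this as the claimed quotient of $\pi(\G,c)$. Let $\widetilde\G$ be the $1$-skeleton of $\widetilde X$; a spanning tree $T$ of $\widetilde\G$ is precisely a minimal set of $c$-edges whose addition to $\G_{\widehat c}$ produces a connected graph. The standard edge-path presentation of $\pi_1(\widetilde X)$ from $(\widetilde X,T)$, rewritten by keeping \emph{all} $c$-edges as generators and declaring the edges of $T$ to be trivial, exhibits $\pi_1(M_\G)$ as $\pi(\G,c)$ modulo the normal closure of the edges in $T$, which is the stated claim. The principal hurdle is the $\pi_1$-invariance of the collapse $\G^{(2)}\to\widetilde X$: it rests squarely on the hypothesis that $c$ is ordinary, which is exactly what ensures that every $\widehat M_\L$ is a sphere and hence each $A_\L$ is simply connected.
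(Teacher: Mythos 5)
Your proof is correct and follows essentially the same route as the paper's: both reduce $\pi_1(M_{\G})=\pi_1(\G^{(2)})$ to the fundamental group of the $2$-complex with one $0$-cell per $\widehat c$-residue, one $1$-cell per $c$-edge and one $2$-cell per $\{i,c\}$-residue, and then read off the presentation via a spanning tree of the quotient graph. The only cosmetic difference is that you cone off the simply connected $2$-skeletons $\L^{(2)}$ where the paper attaches the $n$-balls $c_{\L}$; for $\pi_1$ these devices are interchangeable, and your version just spells out the van Kampen step the paper leaves implicit.
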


\begin{proof}
The group $\pi_1(M_{\G})=\pi_1(\G^{(2)})$ is isomorphic to the fundamental group of the space $X$ obtained by adding to $\G^{(2)}$ the $n$-balls corresponding to the $\widehat c$-residues. The space $X$ has the same homotopy type of a 2-complex with 0-cells corresponding to the $\widehat c$-residues, 1-cells corresponding to the $c$-edges of $\G$ and 2-cells corresponding to the $\{c,i\}$-residues of $\G$, for $i\in\widehat c$. So the result is straightforward.
\end{proof}

\begin{cor} \label{groupM-cor} Let $\G$ be an $(n+1)$-colored graph and $c$ be an ordinary
color for $\G$ such that $g_{\widehat c}=1$, then $\pi_1(M_{\G})\cong \pi(\G,c)$.
\end{cor}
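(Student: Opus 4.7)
The plan is to deduce this corollary as an immediate specialization of Proposition~\ref{groupM}. So the first step is simply to recall the presentation it provides: $\pi_1(M_{\G})$ is obtained from $\pi(\G,c)$ by adjoining, as additional relators, a minimal collection of $c$-edges that make the subgraph $\G_{\widehat c}$ connected (i.e., a set of $c$-edges whose addition produces a connected spanning subgraph).

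The second step is to interpret the hypothesis $g_{\widehat c}=1$ in this language. By the very definition of $g_{\widehat c}$ as the number of connected components of $\G_{\widehat c}$ (equivalently, the number of $\widehat c$-residues of $\G$), the condition $g_{\widehat c}=1$ means precisely that $\G_{\widehat c}$ is already connected. Hence the ``minimal set of $c$-edges which connect $\G_{\widehat c}$'' in Proposition~\ref{groupM} is the empty set: no further relators are needed, the quotient map is trivial, and the isomorphism $\pi_1(M_{\G})\cong\pi(\G,c)$ follows.

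There is essentially no obstacle here beyond checking that ``minimal connecting set'' is interpreted correctly; once one agrees that the empty set connects an already connected graph, the corollary is immediate. One could alternatively give a direct argument by redoing the proof of Proposition~\ref{groupM} in this special case: the space $X$ described there has a single $0$-cell (coming from the unique $\widehat c$-residue), so no $c$-edge needs to be collapsed to identify basepoints, and the standard $2$-complex presentation of $\pi_1(X)$ produces exactly $\pi(\G,c)$. Either route delivers the statement in a couple of lines, so I would simply cite Proposition~\ref{groupM} and remark on the effect of the hypothesis.
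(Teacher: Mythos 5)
Your proposal is correct and matches the paper's (implicit) reasoning: the corollary is stated without proof precisely because it is the specialization of Proposition~\ref{groupM} to the case where $\G_{\widehat c}$ is already connected, so the minimal connecting set of $c$-edges is empty and no extra relators are added. Your reading of $g_{\widehat c}=1$ as connectedness of $\G_{\widehat c}$ is exactly the intended one.
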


When $\G$ has no more than a singular color, we have the following characterization of the fundamental group of $\widehat M_{\G}$.\footnote{The result was first proved in \cite{[Ch]} by using the dual construction.}

\begin{prop} \label{grouphatM} Let $\G$ be an $(n+1)$-colored graph and let $c\in\D$ be such that any color different from $c$ is ordinary. Then $\pi_1(\widehat M_{\G})$ is the quotient of $\pi(\G,c)$, obtained by adding to the relators a minimal set of $c$-edges which connect the graph $\G_{\widehat c}$.
\end{prop}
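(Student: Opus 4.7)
The plan is to adapt the argument of Proposition~\ref{groupM}, with the additional ingredient that the cones attached during the construction of $\widehat M_{\G}$, even when they are not balls, are contractible and are glued along cofibrations. First I would identify which residues of $\G$ can be singular under the hypothesis: since any proper essential $\Delta$-residue $\L$ is a subresidue of a $\widehat d$-residue for every $d\in\widehat\Delta$, and since any such $\widehat d$-residue with $d\neq c$ is ordinary and only has ordinary subresidues (by the consequence of Corollary~\ref{ferri}), $\L$ can be singular only when $\widehat\Delta=\{c\}$, i.e., when $\L$ is itself a $\widehat c$-residue. In particular, by Lemma~\ref{singular manifold}, $\widehat M_{\G}$ is a singular manifold.

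Next I would track how each cone attached in the inductive construction of $\widehat M_{\G}=\G^{(n)}$ from $\G^{(2)}$ affects $\pi_1$. For every essential $h$-residue $\L$ with $3\le h\le n-1$, $\L$ is ordinary, so the cone $c_{\L}$ is an $h$-ball attached along $\widehat M_{\L}\cong S^{h-1}$; since $h-1\ge 2$, this does not change $\pi_1$. The same holds for the $n$-cones over $\widehat d$-residues with $d\neq c$. The only cones that may alter $\pi_1$ are the $n$-cones over $\widehat c$-residues $\L$, which are contractible spaces glued to $\G^{(n-1)}$ along the cofibration $\widehat M_{\L}\hookrightarrow c_{\L}$; up to homotopy each such attachment is the collapse of $\widehat M_{\L}$ to a point, and by van Kampen it quotients $\pi_1(\G^{(n-1)})$ by the normal closure of the image of $\pi_1(\widehat M_{\L})$. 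Since every proper subresidue of a $\widehat c$-residue $\L$ is ordinary, $\widehat M_{\L}$ is built from $\L^{(2)}$ by attaching only balls of dimension $\ge 3$, so $\pi_1(\widehat M_{\L})=\pi_1(\L^{(2)})$ and collapsing $\widehat M_{\L}$ is equivalent in $\pi_1$ to collapsing $\L^{(2)}$.

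Combining these observations, $\pi_1(\widehat M_{\G})=\pi_1(Y)$, where $Y$ is the 2-complex obtained from $\G^{(2)}$ by identifying each $\L^{(2)}$ (for $\L$ a $\widehat c$-residue) to a distinct point. A direct inspection shows that $Y$ has one 0-cell per $\widehat c$-residue, one 1-cell per $c$-edge of $\G$ and one 2-cell per $\{c,i\}$-residue (the $\{i,j\}$-bigons with $i,j\in\widehat c$ being absorbed in the identification). A standard computation of $\pi_1(Y)$ with all $c$-edges as generators, the $\{c,i\}$-bigons as relators, and a further $g_{\widehat c}-1$ $c$-edges forming a spanning tree of the 1-skeleton of $Y$ (i.e., a minimal set of $c$-edges connecting $\G_{\widehat c}$) as relators yields exactly the presentation in the statement.

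The main obstacle is the rigorous justification that attaching each $n$-cone over a singular $\widehat c$-residue acts, up to homotopy, as a collapse of $\widehat M_{\L}$, and that all such cones may be handled one at a time via van Kampen. This rests on the cofibration property of $\widehat M_{\L}\hookrightarrow c_{\L}$ and on the fact that cones associated to distinct essential residues have disjoint interiors. Once this homotopical bookkeeping is in place, the translation between the collapsed 2-complex $Y$ and the claimed group presentation is routine.
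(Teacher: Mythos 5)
Your proposal is correct and follows essentially the same route as the paper: reduce to the observation that only the $\widehat c$-residues can be singular, note that all other cones are balls attached along spheres of dimension $\ge 2$, and replace each cone over a $\widehat c$-residue by the collapse of $\L^{(2)}$ to a point, yielding the $2$-complex with $0$-cells the $\widehat c$-residues, $1$-cells the $c$-edges and $2$-cells the $\{c,i\}$-residues, from which the presentation is read off. The paper states this homotopy equivalence in one line, whereas you supply the van Kampen/cofibration bookkeeping explicitly; the content is the same.
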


\begin{proof} 
The group $\pi_1(\widehat M_{\G})$ is isomorphic to the fundamental group of the space $X$ obtained from $\G^{(2)}$ by performing cone constructions corresponding to the $\widehat c$-residues and all their residues. 
Since $c_{\L}$ is a cone over $\L^{(n-1)}$, for any $\widehat c$-residue $\L$, the space $X$ has the same homotopy type of a 2-complex with 0-cells corresponding to the $\widehat c$-residues, 1-cells corresponding to the $c$-edges of $\G$ and 2-cells corresponding to the $\{c,i\}$-residues of $\G$, for any $i\in\widehat c$. So the result is straightforward. 
\end{proof}

\begin{cor} \label{grouphatM-cor} Let $\G$ be an $(n+1)$-colored graph and $c\in\D$. If $g_{\widehat c}=1$ and any color different from $c$ is ordinary,
then $\pi_1(\widehat M_{\G})\cong \pi(\G,c)$.
\end{cor}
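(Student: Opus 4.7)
The plan is to deduce this corollary as an immediate specialization of Proposition~\ref{grouphatM}, so essentially no new work beyond unpacking what the hypothesis $g_{\widehat c}=1$ forces on the relators.

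First I would invoke Proposition~\ref{grouphatM} under the stated hypothesis that every color different from $c$ is ordinary. This yields that $\pi_1(\widehat M_{\G})$ is obtained from $\pi(\G,c)$ by adding, as additional relators, a minimal set of $c$-edges whose contraction makes $\G_{\widehat c}$ connected. The whole content of the corollary is therefore a statement about this auxiliary set of $c$-edges being empty.

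Next I would observe that, by definition, $g_{\widehat c}$ counts the number of $\widehat c$-residues of $\G$, i.e., the connected components of $\G_{\widehat c}$. The hypothesis $g_{\widehat c}=1$ thus asserts precisely that $\G_{\widehat c}$ is already connected. Consequently the minimal set of $c$-edges required to connect $\G_{\widehat c}$ is empty, so no additional relators are adjoined to the presentation of $\pi(\G,c)$ provided by Proposition~\ref{grouphatM}. Combining these two observations gives $\pi_1(\widehat M_{\G})\cong \pi(\G,c)$, which is the claim.

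Since the argument is purely a reading of the previous proposition in the special case $g_{\widehat c}=1$, there is really no main obstacle here: the only point requiring attention is the convention on what ``a minimal set of $c$-edges which connect the graph $\G_{\widehat c}$'' means, namely that when $\G_{\widehat c}$ is already connected this set is empty (equivalently, the spanning set is trivial). Once this is noted, the proof is a single sentence appealing to Proposition~\ref{grouphatM}.
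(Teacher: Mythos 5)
Your proposal is correct and matches the paper's (implicit) reasoning: the paper states this corollary without proof precisely because, as you observe, $g_{\widehat c}=1$ means $\G_{\widehat c}$ is already connected, so the minimal connecting set of $c$-edges in Proposition~\ref{grouphatM} is empty and no extra relators are added.
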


\section{\hskip -0.7cm . Connected sums} \label{sum}

Suppose that $\G'$ and $\G''$ are two $(n+1)$-colored graphs and let $v'\in V(\G')$ and $v''\in V(\G'')$. We can construct a new $(n+1)$-colored graph $\G$,
called \textit{connected sum} of $\G'$ and $\G''$ (along $v'$ and $v''$), and
denoted by $\G=\G' _{v'}\#_{v''}\G''$, by removing the vertices
$v'$ and $v''$ and by welding the resulting hanging edges with the same color.

In general, the connected sum of two $(n+1)$-colored graphs depends on the choice of the cancelled vertices. But when these vertices are either internal or boundary vertices of index one with respect to $n$-residues of the same colors (the latter condition always holds, up to a color permutation in one of the two graphs), then the connected sum of the graphs is strictly connected with the connected sum of the represented manifolds.

\begin{prop} Let $\G',\G''$ be $(n+1)$-colored graphs and $v'\in V(\G'),v''\in V(\G'')$.
\begin{itemize}
\item[•] (i) if $v'$ and $v''$ are both internal vertices, then $M_{\G' _{v'}\#_{v''} \G''}=M_{\G'}\#M_{\G''};$
\item[•] (ii) if $v'$ and $v''$ are both boundary vertices of index one, each belonging to a singular $\widehat c$-residue, then  $M_{\G' _{v'}\#_{v''} \G''}=M_{\G'}\#_{\partial}M_{\G''}$, where the boundary connected sum of the manifolds is performed along the boundary components corresponding to the involved $\widehat c$-residues.
\end{itemize}
\end{prop}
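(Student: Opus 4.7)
The plan is to pass to the dual simplicial complex $\K_\G$ of the Remark following Proposition~3.3, which satisfies $|\K_\G|=\widehat M_\G$ and has one $n$-simplex $s_v$ per vertex $v\in V(\G)$. The operation of removing $v'$ from $V(\G')$ and welding the hanging $c$-edges corresponds, on the dual side, to removing $\textrm{int}(s_{v'})$ from $|\K_{\G'}|$ and, for each color $c$, re-identifying the $(n-1)$-face of $s_{w'_c}$ opposite the $c$-colored vertex with the analogous face of $s_{w''_c}\subset|\K_{\G''}|$, where $w'_c,w''_c$ are the $c$-neighbors of $v',v''$. Collecting these identifications into a color-matching simplicial isomorphism $\phi:\partial s_{v'}\to\partial s_{v''}$ gives the key structural identity
$$|\K_\G|=\bigl(|\K_{\G'}|\setminus\textrm{int}(s_{v'})\bigr)\cup_\phi\bigl(|\K_{\G''}|\setminus\textrm{int}(s_{v''})\bigr).$$

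For (i), the definition of an internal vertex together with the corollary to Proposition~3.1 (subresidues of ordinary residues are ordinary) forces every residue of $\G'$ containing $v'$ to be ordinary; equivalently $s_{v'}\cap|\bar\S_{\G'}|=\emptyset$, so $s_{v'}$ is a PL $n$-ball lying in $\textrm{int}(M_{\G'})$, and likewise for $s_{v''}$. A short induction on $|\Delta|$ (applying the same connected sum construction recursively to smaller graphs) shows that every merged $\Delta$-residue of $\G$ is ordinary, since its quasi-manifold is the connected sum of two $(|\Delta|-1)$-spheres and is therefore a sphere. Consequently $|\bar\S_\G|=|\bar\S_{\G'}|\sqcup|\bar\S_{\G''}|$, and choosing regular neighborhoods disjoint from $s_{v'},s_{v''}$ yields
$$M_\G=\bigl(M_{\G'}\setminus\textrm{int}(s_{v'})\bigr)\cup_\phi\bigl(M_{\G''}\setminus\textrm{int}(s_{v''})\bigr),$$
which is the standard PL connected sum $M_{\G'}\#M_{\G''}$.

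For (ii), the same analysis restricted to residues meeting $v'$ shows that the unique singular residue of $\G'$ containing $v'$ is $\L'_c$; dually, $s_{v'}$ meets $|\bar\S_{\G'}|$ only at the $c$-colored vertex $s_{\L'_c}$. Taking the regular neighborhood of $|\bar\S_{\G'}|$ as in Proposition~3.5, the truncated ball $B'=s_{v'}\cap M_{\G'}$ is a PL $n$-ball whose boundary sphere decomposes along an equatorial $(n-2)$-sphere as $\partial B'=D'\cup\tilde D'$, where $D'$ is an $(n-1)$-disk on the boundary component $\widehat M_{\L'_c}\subset\partial M_{\G'}$ and $\tilde D'\subset\partial s_{v'}$ is an $(n-1)$-disk in $\textrm{int}(M_{\G'})$; analogously for $B''$, $D''$, $\tilde D''$. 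Since all $(n-1)$-subresidues of $\L'_c$ containing $v'$ are ordinary, $v'$ is an internal vertex of the $n$-colored graph $\L'_c$ (and similarly $v''$ of $\L''_c$), so part~(i) applied to the connected sum of $\L'_c$ and $\L''_c$ at $v',v''$ yields $M_{\L_c}=M_{\L'_c}\#M_{\L''_c}$, where $\L_c$ denotes the merged $\widehat c$-residue of $\G$. As neither $M_{\L'_c}$ nor $M_{\L''_c}$ is an $(n-1)$-sphere, neither is their connected sum, so $\L_c$ remains singular in $\G$, while all other merged residues are ordinary as in (i). Choosing the regular neighborhood of $|\bar\S_\G|$ around the identified point $s_{\L_c}$ so that it absorbs exactly the $D'\sim D''$ portion of the $\phi$-gluing, the surviving identification on $M_\G$ reduces to the restriction $\tilde\phi:\tilde D'\to\tilde D''$, giving
$$M_\G=\bigl(M_{\G'}\setminus\textrm{int}(B')\bigr)\cup_{\tilde\phi}\bigl(M_{\G''}\setminus\textrm{int}(B'')\bigr),$$
which is the standard PL realization of the boundary connected sum $M_{\G'}\#_\partial M_{\G''}$ along the components $\widehat M_{\L'_c},\widehat M_{\L''_c}$ via the disks $D',D''$.

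The main technical obstacle will be the bookkeeping for the regular neighborhoods in (ii): one must verify that $N(|\bar\S_\G|)$ near $s_{\L_c}$ can be assembled from the neighborhoods of $s_{\L'_c}$ in $|\K_{\G'}|$ and $s_{\L''_c}$ in $|\K_{\G''}|$ so that the identified disks $D'\sim D''$ lie entirely inside it and only $\tilde\phi$ survives on $M_\G$. This is most cleanly handled at the level of the first barycentric subdivision $\K''_\G$, where $\textrm{Star}(s_{\L_c},\K''_\G)$ decomposes simplicially across the $\phi$-identification; the same decomposition underlies the inductive claim about merged residues in (i).
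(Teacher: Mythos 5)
Your proposal is correct and follows essentially the same route as the paper's proof: both realize $\widehat M_{\G'_{v'}\#_{v''}\G''}$ by deleting the $n$-balls dual to $v'$ and $v''$ (your $s_{v'}$ is exactly the paper's $|\textrm{Star}(V_{v'},\C'_{\G'})|$ under the isomorphism $\C'_{\G}\cong\K'_{\G}$) and gluing the boundary spheres color-coherently, then in case (ii) observe that the truncated balls meet $\partial M_{\G'}$, $\partial M_{\G''}$ in $(n-1)$-disks so that only a disk-to-disk identification survives, yielding the boundary connected sum. Your added induction showing merged residues stay ordinary (resp.\ that $\L_c$ stays singular) makes explicit a point the paper leaves implicit, but it is not a different method.
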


\begin{proof} Let $\G=\G' _{v'}\#_{v''}\G''$. Then the complex $\C'_{\G}$ is obtained from $(\C'_{\G'}-\textrm{Star}(v',\C'_{\G'}))\cup(\C'_{\G''}-\textrm{Star}(v'',\C'_{\G''}))\cup\textrm{Link}(v',\C'_{\G'})\cup\textrm{Link}(v'',\C'_{\G''})$ by attaching any simplex $\langle V_{\L'_0},V_{\L'_1},\ldots,V_{\L'_h}\rangle$ of $\textrm{Link}(v',\C'_{\G'})$ with the simplex $\langle V_{\L''_0},V_{\L''_1},\ldots,V_{\L''_h}\rangle$ of $\textrm{Link}(v'',\C'_{\G''})$, where $\L'_i$ and $\L''_i$ are both $D_i$-residues, the first one of $\G'$ containing $v'$ and the second one of $\G''$ containing $v''$. Therefore, $\widehat M_{\G}$ is obtained by removing from $\widehat M_{\G'}$ and $\widehat M_{\G''}$ the $n$-balls $\textrm{int}(\vert\textrm{Star}(v',\C'_{\G'})\vert)$ and $\textrm{int}(\vert\textrm{Star}(v'',\C'_{\G''})\vert)$ and attaching by a homeomorphism the $(n-1)$-spheres $\vert\textrm{Link}(v',\C'_{\G'})\vert$ and $\vert\textrm{Link}(v'',\C'_{\G''})\vert$.

(i) In this case $\vert\textrm{Star}(v',\C'_{\G'})\vert\cap N(\S_{\G'})=\vert\textrm{Star}(v'',\C'_{\G''})\vert\cap N(\S_{\G''})=\emptyset$, and therefore $M_{\G}=M_{\G'}\#M_{\G''}.$

(ii) In this case $\vert\textrm{Star}(v',\C'_{\G'})\vert\cap N(\S_{\G'})\neq\emptyset\neq\vert\textrm{Star}(v'',\C'_{\G''})\vert\cap N(\S_{\G''})$ and both $\vert\textrm{Star}(v',\C'_{\G'})\vert\cap\vert C(\S'_{\G'},\C''_{\G'})\vert$ and $\vert\textrm{Star}(v'',\C'_{\G''})\vert\cap\vert C(\S'_{\G''},\C''_{\G''})\vert$ are $n$-balls. In fact, they are homeomorphic to $s_n-\textrm{int}(|\textrm{Star}(P,\bar s''_n)|)$, where $P$ is any vertex of the standard $n$-simplex $s_n$.
Moreover, $B'=\vert\textrm{Star}(v',\C'_{\G'})\vert\cap \partial (M_{\G'})$ and $B''=\vert\textrm{Star}(v'',\C'_{\G''})\vert\cap \partial (M_{\G''})$ are both $(n-1)$-balls, since they are homeomorphic to $|\textrm{Link}(P,\bar s''_n)|$, as well as $A'=\vert\textrm{Link}(v',\C'_{\G'})\vert\cap \vert C(\S'_{\G'},\C''_{\G'})\vert$ and $A''=\vert\textrm{Link}(v'',\C'_{\G''})\vert\cap \vert C(\S'_{\G''},\C''_{\G''})\vert$, since they are both the complement of an $(n-1)$-ball in an $(n-1)$-sphere. 
If $M'=M_{\G'}-\textrm{int}(|\textrm{Star}(v',\C'_{\G'})|)$ and $M''=M_{\G''}-\textrm{int}(|\textrm{Star}(v'',\C'_{\G''})|)$, then $M'$ (resp. $M''$) is homeomorphic to $M_{\G'}$ (resp. to $M_{\G''}$) and $A'\subset \partial M'$ (resp. $A''\subset \partial M''$).
Since $M_{\G}$ is obtained by attaching $M'$ with $M''$ via a homeomorphism from $A'$ to $A''$, the proof is achieved.
\end{proof}

\section{\hskip -0.7cm . Results in dimension four}\label{Section8}

The dimension four is the smallest one where quasi-manifolds which are not singular manifolds appear. In this context we have the following characterization.\footnote{Compare Lemma~21 of \cite{[CCDG]}.}

\begin{lemma} \label{prova} Let $\G$ be a $5$-colored graph of order $2p$. Then: 
\begin{itemize}
\item[•] (i) $2|\R_3(\G)|- 3|\R_2(\G)|+10p\ge 0$;
\item[•] (ii) $\widehat M_{\G}$ is a singular manifold if and only if $2|\R_3(\G)|=3|\R_2(\G)|-10p$.
\end{itemize}
\end{lemma}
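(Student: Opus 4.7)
The plan is to reduce inequality (i) to the classical bound $\chi(S)\le 2$ for a closed connected surface $S$, with equality iff $S\cong S^2$, applied to each $3$-residue of $\G$, and then to sum the resulting termwise inequality over $\R_3(\G)$ by means of two elementary double counts.

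First I would invoke the $n=2$ case of the construction of Section~\ref{construction}: for any $3$-residue $\L$ of $\G$, the space $\widehat M_{\L}$ is the closed connected surface obtained from $\L$ by attaching a disk along each of its bigons. Writing $v(\L)$ and $b(\L)$ for the numbers of vertices and bigons of $\L$, the $3$-regularity forces $3v(\L)/2$ edges and hence
$$
\chi(\widehat M_{\L}) \;=\; v(\L)-\tfrac{3}{2}v(\L)+b(\L) \;=\; b(\L)-\tfrac{1}{2}v(\L).
$$
Combining this with $\chi(\widehat M_{\L})\le 2$ yields $2b(\L)\le v(\L)+4$, with equality precisely when $\widehat M_{\L}\cong S^2$, i.e., when $\L$ is ordinary.

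Next I would sum this termwise inequality over $\L\in\R_3(\G)$ by invoking two multiplicity counts. Each vertex of $\G$ is contained in a unique $\{i,j,k\}$-residue for each of the $\binom{5}{3}=10$ three-element subsets of $\D$, so $\sum_{\L\in\R_3(\G)}v(\L)=10\cdot 2p=20p$. A bigon with color set $\{i,j\}$ is contained in a unique $\{i,j,k\}$-residue for each of the $3$ remaining colors, giving $\sum_{\L\in\R_3(\G)}b(\L)=3\,|\R_2(\G)|$. Summing $2b(\L)\le v(\L)+4$ therefore produces $6|\R_2(\G)|\le 20p+4|\R_3(\G)|$, which after halving and rearranging is exactly inequality (i).

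For (ii), equality in the summed form holds iff $2b(\L)=v(\L)+4$ for every $3$-residue $\L$, i.e., iff every $3$-residue of $\G$ is ordinary. Since $\R''_h(\G)=\emptyset$ automatically for $h\le 2$, Lemma~\ref{singular manifold} then identifies this condition with $\widehat M_{\G}$ being a singular manifold. There is no substantial obstacle in the argument; the only care needed is verifying the two multiplicities $10$ and $3$, both of which follow from the fact that, once a color subset $\Delta$ is fixed, each vertex of $\G$ belongs to a unique $\Delta$-residue.
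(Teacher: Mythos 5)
Your proposal is correct and follows essentially the same route as the paper: the paper writes the key surface relation as $2-2\rho_{\L}=b-v/2$ with $\rho_{\L}\ge 0$ (equivalent to your $\chi(\widehat M_{\L})\le 2$ with equality iff $\widehat M_{\L}\cong S^2$), sums it over $\R_3(\G)$ using the same multiplicities $10$ and $3$, and deduces (ii) from the fact that singularity of $\widehat M_{\G}$ as a singular manifold is equivalent to all $3$-residues being ordinary. Your version merely makes the two double counts and the appeal to Lemma~\ref{singular manifold} explicit.
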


\begin{proof} For each 3-residue $\L$ of $\G$ we have the relation $2-2\rho_{\L}=b-v/2,$ where $v$ (resp. $b$) is the number of vertices (resp. of bigons) of $\L$, and $\rho_{\L}\ge 0$ denotes the genus (resp. half of the genus) of the orientable (resp. non-orientable) surface $\widehat M_{\L}$. 
By summing over all 3-residues of $\G$ we obtain
$2|\R_3(\G)|-2\sum_{\L\in\R_3(\G)}\rho_{\L}=3|\R_2(\G)|-10p.$ So we have 
$0\le 2\sum_{\L\in\R_3(\G)}\rho_{\L}=2|\R_3(\G)|-3|\R_2(\G)|+10p$, which proves (i). Since all 3-residues of a singular 4-manifold are ordinary, we obtain (ii).
\end{proof} 

\medskip

Various classification results of 3-manifolds with boundary representable by $4$-colored graphs of small order are contained in \cite{[CM]}, \cite{[CFMT]} and \cite{[CFMT2]}. 

In the 4-dimensional case, as pointed out in Section~\ref{section_suspension}, the 5-colored graph of order two represents $S^4$ and a 5-colored graph of order four represents $S^4$ if it is bipartite and $RP^2\times B^2$ if it is non-bipartite. For order six 5-colored bipartite graphs we have the following result.

\begin{prop} \label{order_6} Let $\G$ be a $5$-colored bipartite graph of order six. Then $M_{\G}$ is one of the following 4-manifolds: $S^4$, $B^4$, $S^1\times B^3$, $S^1\times S^1\times B^2$.
\end{prop}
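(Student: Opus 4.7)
The plan is a finite combinatorial case analysis. A bipartite $5$-colored graph $\G$ of order $6$ has its vertex set split into two triples by the bipartition, and each of the five colors induces a perfect matching between them, equivalently an element of $S_3$. Normalising one matching to the identity via the diagonal action of $S_3$ on each triple, $\G$ is determined up to isomorphism and permutation of the remaining four colors by a quadruple in $S_3^4$ modulo simultaneous conjugation. This yields a small finite list of isomorphism classes that I would enumerate by hand.

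The identification of $M_\G$ proceeds by splitting on the number of pairs of \emph{parallel} colors (two colors $i\neq j$ with the same induced matching). If $\G$ has at least one such pair, Theorem~\ref{suspension} expresses $\G=\Sigma_c(\G')$ for some bipartite $4$-colored graph $\G'$ of order $6$; iterating yields a presentation $\G=\Sigma_{c_1}\cdots\Sigma_{c_k}(\G^{(0)})$ ($1\le k\le 3$) in which $\G^{(0)}$ is a bipartite $(5-k)$-colored graph of order $6$ with pairwise distinct matchings. The formula $M_\G=M_{\G^{(0)}}\times I^k$ from Theorem~\ref{suspension}(iii) (combined with Theorem~\ref{suspension}(i) in the sphere case, and capping off spherical boundary components as justified at the end of Section~\ref{manifold}) reduces the problem to identifying $M_{\G^{(0)}}$. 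By a parallel induction on the number of colors, using the bigon of order $6$ (which represents $S^1$) as base and, at each level, the same suspension dichotomy plus the fundamental group computation of Corollary~\ref{groupM-cor} and the Euler characteristic of Lemma~\ref{chi}, one verifies that the possible bases $\widehat M_{\G^{(0)}}$ at levels $5-k=2,3,4$ are only $S^1$; $S^2$ or $T^2$; $S^3$ or the compact $3$-manifold $S^1\times B^2$ (whose non-spherical boundary $T^2$ prevents capping). After $k$ suspensions and the corresponding capping, these collapse to exactly $S^4$, $B^4$, $S^1\times B^3$, $S^1\times S^1\times B^2$.

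When $\G$ has no parallel colors, its five matchings are pairwise distinct elements of $S_3$; since $|S_3|=6$, exactly one element is missing, and after normalising $\pi_0=e$ only two inequivalent cases remain (missing element a transposition or a $3$-cycle). For each, I would compute directly the combinatorial invariants: the Euler characteristic via Lemma~\ref{chi} (the four targets take values $2,1,0,0$), the fundamental group via Corollary~\ref{groupM-cor} (choosing a color $c$ with $g_{\widehat c}=1$ whenever possible; the four targets have $\pi_1=1,1,\mathbb{Z},\mathbb{Z}^2$), and the boundary via Lemma~\ref{boundary0} or Proposition~\ref{boundary1}. The triple of invariants (Euler characteristic, fundamental group, boundary) pins down $M_\G$ uniquely among the four candidates.

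The main obstacle is the induction step at level $h=4$, namely verifying that no $4$-colored bipartite graph of order $6$ with pairwise distinct matchings represents $S^1\times S^2$ (which would spuriously yield $(S^1\times S^2)\times I$, outside the list) nor any lens space $L(p,q)$ with $p\ge 3$. This requires combining the small-order classifications of $3$-manifolds via $4$-colored graphs in \cite{[CM]}, \cite{[CFMT]}, \cite{[CFMT2]} with a direct dipole and $\chi$-analysis of the remaining cases. A secondary difficulty is the no-parallel-colors subcase, where the graph may admit no ordinary dipole at all (as already happens for $4$-colored analogues), so one cannot appeal to Proposition~\ref{proper} to reduce order and must close the argument entirely by the invariants listed above.
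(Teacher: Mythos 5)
Your overall strategy---peel off parallel colors via Theorem~\ref{suspension} and treat separately the graphs with pairwise distinct matchings---is essentially the paper's, except that the paper does not re-derive the lower-dimensional inputs: it cites \cite{[CC]} for the fact that $S^4$ is the only closed orientable $4$-manifold with an order-six representation, and \cite{[Li]}, \cite{[CM]} for the fact that the order-six compact orientable $3$-manifolds are exactly $S^3$, $S^1\times B^2$ and $S^1\times S^1\times I$, which disposes of your ``main obstacle'' (ruling out $S^1\times S^2$, lens spaces, and a distinct-matchings $T^2\times I$ graph) at a stroke. A small correction to your enumeration: the ``two inequivalent cases'' in the no-parallel-colors situation are in fact one, because the two vertex triples may be relabelled \emph{independently}, not only by simultaneous conjugation; since $\tau'(S_3\setminus\{x\})\tau^{-1}=S_3\setminus\{\tau' x\tau^{-1}\}$ and $\tau' x\tau^{-1}$ reaches every element of $S_3$, all such graphs are isomorphic. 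This matches the paper's assertion that exactly one order-six bipartite $5$-colored graph is not a suspension.

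The genuine gap is in your final step for that exceptional graph. You propose to compute $\chi$, $\pi_1$ and the boundary and let this triple ``pin down $M_{\G}$ uniquely among the four candidates.'' But the proposition to be proved is precisely that $M_{\G}$ \emph{is} one of the four candidates, so matching invariants against the list is circular. What is needed, and what the paper supplies, is a positive identification: $M=M_{\G}$ is simply connected (Corollary~\ref{groupM-cor}) with $\chi(M)=1$ (Lemma~\ref{chi}); its connected boundary, described via Proposition~\ref{boundary1} as a gluing of the $M_{\L_i}$, carries a genus-one Heegaard splitting and is simply connected by a Van Kampen argument, hence is $S^3$; capping off by a ball yields a closed simply connected orientable $M'$ with $\chi(M')=2$, so $\beta_2(M')=0$ and $M'$ is a $4$-sphere, whence $M\cong B^4$. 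Without this capping-off/homology step your argument identifies nothing. Relatedly, note that Proposition~\ref{boundary1} only presents $\partial M_{\G}$ as a union of pieces; recognizing that union as $S^3$ is itself an argument, not an invariant one reads off, so ``the boundary'' cannot be treated as a computed datum on the same footing as $\chi$ and $\pi_1$.
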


\begin{proof} First of all, it is well-known that the only closed orientable 4-manifold admitting a representation with six vertices is $S^4$ (see \cite{[CC]}). 
Moreover, the compact orientable $3$-manifolds admitting an order six representation are $S^3$, $S^1\times B^2$ and $S^1\times S^1\times I$ (see \cite{[Li]} and \cite{[CM]}). From Theorem~\ref{suspension} it follows that the three $4$-manifolds $S^4$, $S^1\times B^3$, $S^1\times S^1\times B^2$ admit order six representation which are the suspension of the $4$-colored graphs of order six representing the above 3-manifolds and no other compact $4$-manifold can be obtained via suspension process from a $4$-colored graph. So we can restrict our attention to order six bipartite 5-colored graphs which are not the suspension of a $4$-colored graph. It is easy to see that, up to isomorphism, there is only a graph of such type (depicted on the right of Figure~3), which represents a simply connected $4$-manifold $M$ (by Corollary~\ref{groupM-cor}) with connected spherical boundary.\footnote{From Proposition~\ref{boundary1} the boundary of $M$ results to be a 3-manifold admitting a genus one Heegaard splitting and simple Van Kampen type arguments show that it is simply connected.} The represented manifold turns out to be a 4-ball by simple homology arguments. In fact, by Lemma~\ref{chi} we have $\chi(M)=1$ and, if $M'$ is the closed $4$-manifold obtained from $M$ by capping off its boundary by a 4-ball, then $\chi (M')=2$. Since $M'$ is simply connected and orientable we have $\beta_1(M')=\beta_3(M')=0$ and $\beta_0(M')=\beta_4(M')=1$, therefore $\beta_2(M')=0$ and $M'$ is a $4$-sphere. As a consequence, $M$ is homeomorphic to $B^4$.
\end{proof} 

\medskip

The previous result shows that in dimension $>3$ the boundary of $M_{\G}$ may have spherical components. Order six graphs representing $S^1\times S^1\times B^2$, $S^1\times B^3$ and $B^4$ are depicted in Figures~2 and 3.

\begin{figure}[h!]                      
\begin{center}                         
\includegraphics[width=15cm]{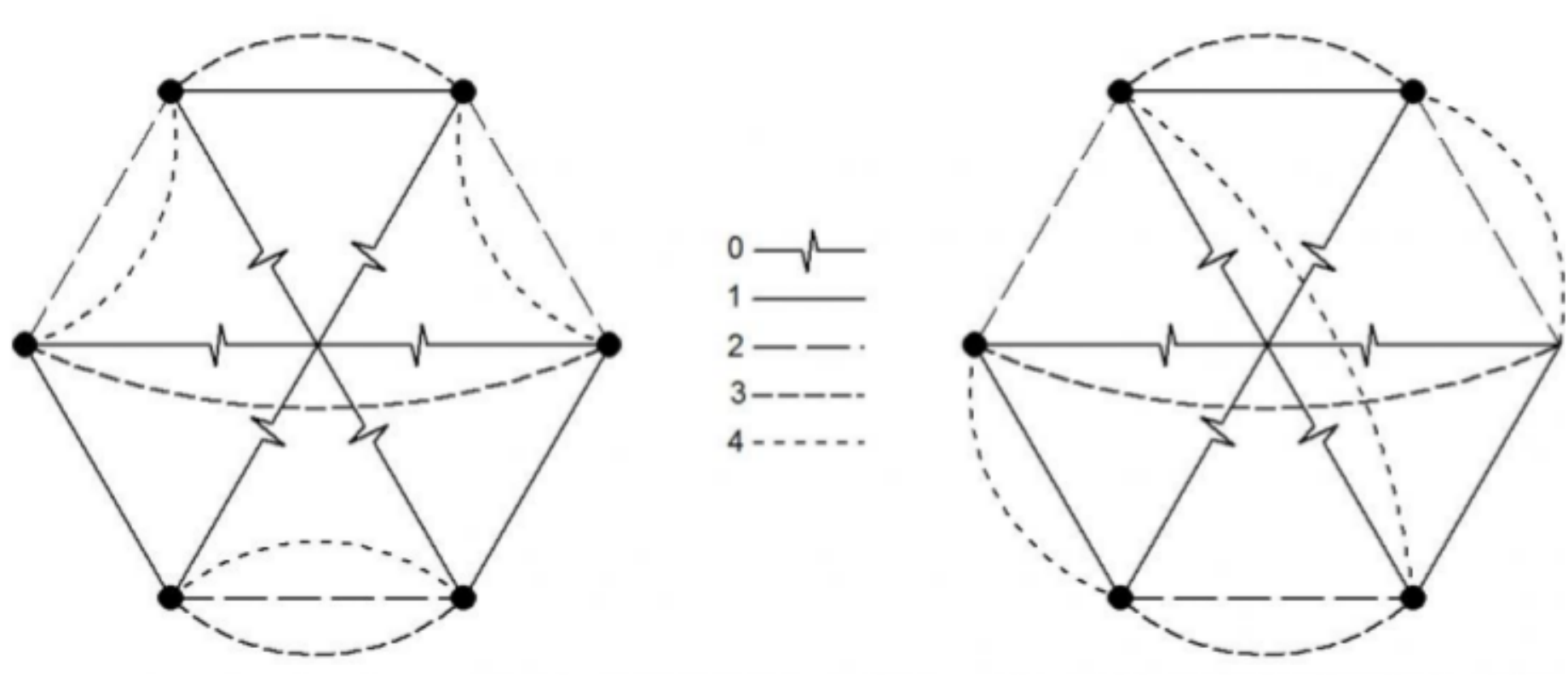}
\caption[legenda elenco figure]{Graphs representing $S^1\times B^3$ and $B^4$, respectively.}\label{fig2}
\end{center}
\end{figure}

\subsection{\hskip -0.7cm . G-degree of supercontracted $\bf 5$-coloured graphs}\label{G-degree}

As sketched in the introduction, a strong interaction is known to exist between edge-colored graphs, representing quasi-manifolds of arbitrary dimension, and random tensor models (see for example \cite{[Gu]}, \cite{[GR]} and \cite{[CCDG]}). In this framework, colored graphs naturally arise as Feynman graphs encoding tensor trace invariants. The key tool for this relashionship is the so-called {\it G-degree} $\omega_G(\G)$ of an $(n+1)$-colored graph $\G$, which drives the $1/N$ expansion\footnote{The coefficients $F_{\omega_G}[\{t_B\}]$ of the formal series are generating functions of bipartite $(n+1)$-colored graphs with fixed G-degree $\omega_G$.}
\begin{equation*} \label{1/N expansion}
 \sum_{\omega_G\ge 0}N^{-\frac{2}{(n-1)!}\omega_G}F_{\omega_G}[\{t_B\}],\ \ \ \ \ \ \ \ \ \ (1)
\end{equation*}
within the $n$-tensor product of the complex space $\mathbb{C}^N$.

The $1/N$ expansion of formula (1) describes the role of colored graphs (and of their G-degree $\omega_G$) within colored tensor models theory and explains the importance of looking for catalogues and classification results concerning all $n$-quasi-manifolds represented by $(n+1)$-colored graphs with a given G-degree.\footnote{A parallel tensor models theory, involving {\it real} tensor variables $T\in (\mathbb{R}^N)^{\otimes 
n}$, has been developed, taking into account also non-bipartite colored graphs (see \cite{[Wi]}): this is why both bipartite and non-bipartite colored graphs will be considered in this context.}
The representation theory via $(n+1)$-colored graphs described in the present paper for all $n$-quasi-manifolds (and their associated compact $n$-manifold possibly with boundary) might be a significant tool for this purpose.

As previously cited, several classification results for 3-manifolds with boundary represented by 4-colored graphs has been recently obtained. Nevertheless, dimension four appears to be a very interesting context within this approach, since it is the least dimension in which colored graphs may represent quasi-manifolds with non-isolated singularities. In this direction, Proposition~\ref{order_6} seems to be particularly significant.

The G-degree arises from the existence of particular embeddings of colored graphs into closed surfaces.

\begin{prop} \cite{[G2]} \label{reg_emb}
Let $\G$ be a bipartite (resp. non-bipartite) $(n+1)$-colored graph of order $2p$. Then for each cyclic permutation $\varepsilon = (\varepsilon_0\ \varepsilon_1\ \ldots\ \varepsilon_n)$ of $\Delta_n$, up to inversion, there exists a cellular embedding, called \emph{regular}, of $\G$ into an orientable (resp. non-orientable) closed surface $F_{\varepsilon}(\Gamma)$
whose regions are bounded by the images of the $\{\varepsilon_j,\varepsilon_{j+1}\}$-bigons, for each $j \in \mathbb Z_{n+1}$.
Moreover, the genus (resp. half the genus)  $\rho_{\varepsilon} (\Gamma)$ of $F_{\varepsilon}(\Gamma)$ satisfies
\begin{equation*}
\chi (F_\varepsilon(\Gamma)) =  2 - 2\rho_\varepsilon(\Gamma)= \sum_{j\in \mathbb{Z}_{n+1}} g_{\varepsilon_j,\varepsilon_{j+1}} + (1-n)p. \ \ \ \ \ \ \ \ \ \ (*)
\end{equation*}
No regular embeddings of $\G$ exist into non-orientable (resp. orientable) surfaces.
\end{prop}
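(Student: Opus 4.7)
The plan is to build the embedding from an explicit rotation system determined by $\varepsilon$, identify its faces via the face-tracing procedure with the bicolored bigons, and then read off the Euler characteristic formula. The orientability dichotomy will come from comparing the rotation system with a 2-coloring of the vertices.

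First I would fix the rotation system. In the bipartite case, with bipartition $V(\Gamma)=V'\cup V''$, assign to every $v\in V'$ the cyclic order $(\varepsilon_0,\varepsilon_1,\ldots,\varepsilon_n)$ on its incident edges (read by their colors), and to every $v\in V''$ the inverse cyclic order. In the non-bipartite case, assign the same cyclic order $\varepsilon$ at every vertex. The classical Heffter--Edmonds correspondence between (possibly signed) rotation systems and cellular embeddings into closed surfaces then produces a unique such embedding $F_\varepsilon(\Gamma)$; the first rotation system is orientable by construction, while the second one is only realizable on a non-orientable surface, since an orientable embedding would force the two endpoints of every edge to sit in complementary classes of a 2-coloring of $V(\Gamma)$, contradicting the existence of an odd cycle.

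Next I would identify the faces. Face-tracing at a dart of color $c$ incident to $v$ proceeds by crossing the edge to its other endpoint $w$ and selecting there the next color in the local rotation. Because of the opposite rotations on $V'$ and $V''$ (respectively the identical rotations in the non-bipartite case), if a face enters $v$ along an $\varepsilon_j$-edge and leaves along an $\varepsilon_{j+1}$-edge, then at the next vertex it will enter along $\varepsilon_{j+1}$ and again leave along $\varepsilon_j$. Hence every face closes up after an even number of steps along an $\{\varepsilon_j,\varepsilon_{j+1}\}$-bigon, and conversely every such bigon is the boundary of exactly one face. It follows that the number of faces equals $\sum_{j\in\mathbb{Z}_{n+1}} g_{\varepsilon_j,\varepsilon_{j+1}}$.

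Finally, a direct count gives $\lvert V(\Gamma)\rvert=2p$ and $\lvert E(\Gamma)\rvert=\tfrac{(n+1)\cdot 2p}{2}=(n+1)p$, so
$$\chi(F_\varepsilon(\Gamma))=2p-(n+1)p+\sum_{j\in\mathbb Z_{n+1}}g_{\varepsilon_j,\varepsilon_{j+1}}=(1-n)p+\sum_{j\in\mathbb Z_{n+1}}g_{\varepsilon_j,\varepsilon_{j+1}},$$
which is $(\ast)$, recalling that $\chi=2-2\rho_\varepsilon(\Gamma)$ both in the orientable case (where $\rho_\varepsilon$ is the genus) and in the non-orientable case (where $\rho_\varepsilon$ is half the non-orientable genus). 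The main obstacle is the orientability discussion: one must verify both directions, that the bipartite rotation really is orientable (which follows because the two local orientations at the endpoints of each edge are forced to agree once a global orientation is chosen on $V'$) and that the non-bipartite graph admits no regular orientable embedding (which is the odd-cycle obstruction above). Everything else is the standard combinatorial-maps dictionary together with an Euler characteristic computation.
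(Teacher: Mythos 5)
The paper does not actually prove this proposition: it is quoted from Gagliardi's paper [G2] and used as a black box, so there is no internal proof to compare against. Your route --- realize the regular embedding by a rotation system whose faces are the $\{\varepsilon_j,\varepsilon_{j+1}\}$-bigons and then count --- is the classical one, and your treatment of the bipartite case (rotation $\varepsilon$ on $V'$, its inverse on $V''$, faces traced along consecutive-color bigons, Euler count $2p-(n+1)p+\sum_{j} g_{\varepsilon_j,\varepsilon_{j+1}}$) is correct, as is the odd-cycle obstruction showing that no orientable regular embedding can exist when $\Gamma$ is non-bipartite.

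There is, however, a genuine gap in your construction for the non-bipartite case. A pure (unsigned) rotation system assigning the same cyclic order $\varepsilon$ to every vertex defines, via Heffter--Edmonds, an \emph{orientable} embedding, and its faces are not the bigons: if a face leaves $v$ along an $\varepsilon_{j+1}$-edge and arrives at $w$, the rule at $w$ (which carries the same rotation $\varepsilon$) sends it out along $\varepsilon_{j+2}$, not back along $\varepsilon_j$, so the faces cycle through all $n+1$ colors. Your face-tracing claim therefore fails for the system you wrote down in that case. To repair it you must either (a) use the signed rotation system with local rotation $\varepsilon$ at every vertex and every edge twisted --- then each edge traversal reverses the local orientation, the successor of $\varepsilon_{j+1}$ is computed in $\varepsilon^{-1}$, the faces are indeed the bigons, and this system is reducible to an unsigned one by vertex switchings exactly when every cycle is even, i.e.\ when $\Gamma$ is bipartite, which recovers the orientability dichotomy --- or (b) bypass rotation systems and build $F_\varepsilon(\Gamma)$ directly by gluing a disk onto each $\{\varepsilon_j,\varepsilon_{j+1}\}$-bigon, checking that every $\varepsilon_j$-edge lies on exactly one $\{\varepsilon_{j-1},\varepsilon_j\}$-bigon and one $\{\varepsilon_j,\varepsilon_{j+1}\}$-bigon and that the link of each vertex in the resulting complex is a single $2(n+1)$-cycle, so the result is a closed surface. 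With either fix the Euler characteristic computation and the identification of $\rho_\varepsilon$ go through exactly as you wrote them.
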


The G-degree of a colored graph is defined in terms of these embeddings as follows. Let $\G$ be an $(n+1)$-colored graph, with $n\ge 2$, and let $\Xi_n$ be 
the set of the $n!/2$ cyclic permutations of $\Delta_n$, up to inversion. For any $\varepsilon\in\Xi_n$, the genus $ \rho_{\varepsilon}(\G)$  is called the \emph{regular genus of $\Gamma$ with respect to $\varepsilon$}. Then, the \emph{Gurau degree} (or \emph{G-degree} for short) of $\Gamma$ is defined as

\begin{equation*}
\omega_{G}(\Gamma) \ = \ \sum_{\varepsilon\in\Xi_n} \rho_{\varepsilon}(\Gamma).
\end{equation*}




For $n=2$ any bipartite (resp. non-bipartite) $3$-colored graph $\G$ represents an orientable (resp. non-orientable) surface $\widehat M_{\G}$ and $\omega_G(\Gamma)$ is exactly the genus (resp. half of the genus) of $\widehat M_{\G}$.
On the other hand, for $n\geq 3$, the G-degree of any $(n+1)$-colored graph is proved to be a non-negative {\it integer}, even in the non-bipartite case (see \cite{[CCDG]}).

In dimension 4 the G-degree is always a multiple of 3, giving rise to the {\it reduced} G-degree $\omega'_G(\G)=\omega_G(\G)/3$. Moreover, from Theorems~1 and~2 of \cite{[CG]} we know that when a 5-colored graph is either bipartite or represents a singular 4-manifold, then $\omega'_G(\G)$ is even. This fact implies that: 
\begin{itemize}
\item in the 4-dimensional complex contest, the only non-vanishing terms in the $1/N$ expansion of (1) are the ones corresponding to even powers of $1/N$;
\item in the 4-dimensional real tensor models framework, where also non-bipartite graphs are involved, only 5-colored graphs representing quasi-manifolds which are not singular manifolds may appear in the terms corresponding to even powers of $1/N$.
\end{itemize}

Now we can give a characterization of supercontracted\footnote{Lemma~\ref{no-dipole}(i)  proves that any closed n-manifold M can be represented by a supercontracted (n+1)-colored graph, called a {\it crystallization} of M (see \cite{[FGG]}).} 5-colored graphs with $\omega'_G\le 3$.

In order to do that, define the cyclic permutation $\varepsilon^c$ of $\widehat c$, for each $c\in\Delta_4$, in the following way:
$$\varepsilon^0=(1\ 3\ 4\ 2),\ \varepsilon^1=(0\ 3\ 2\ 4),\ \varepsilon^2=(0\ 3\ 4\ 1),\ \varepsilon^3=(0\ 2\ 1\ 4),\ \varepsilon^4=(0\ 2\ 3\ 1).$$
If $\G$ is a 5-colored graph of order $2p$, then the sum of the relations $(*)$ in Proposition~\ref{reg_emb} over all $\widehat c$-residues of $\G$ gives: 
$$2g_{\widehat c}-2\rho_{\widehat c}=\sum_{i\in Z_4} g_{\varepsilon_i^c,\varepsilon_{i+1}^c} -2p\ \ \ \ (**),$$
where $\rho_{\widehat c}$ denotes the sum of the regular genera of the ${\widehat c}$-residues with respect to $\varepsilon^c$.

By summing the five relations $(**)$ over $c\in \Delta_4$, we obtain
$$2|\R_4(\G)|-2\sum_{c\in \Delta_4}\rho_{\widehat c}=2|\R_2(\G)|-10p.$$
The sum $\sum_{c\in \Delta_4}\rho_{\widehat c}$ is called the {\it subdegree} $\rho_{G}(\G)$ of the 5-colored graph $\G$. Therefore 
$$\rho_{G}(\G)=|\R_4(\G)|+5p-|\R_2(\G)|.$$ 
This definition is motivated by the following result.

\begin{lemma} \label{subdegree}
Let $\omega_G(\G_{\widehat c})$ be the sum of the G-degrees of the connected components of $\G_{\widehat c}$, then $$\sum_{c\in \Delta_4}\omega_G(\G_{\widehat c})=3\rho_G(\G).$$
\end{lemma}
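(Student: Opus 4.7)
The plan is to compute $\omega_G(\G_{\widehat c})$ explicitly via the regular-embedding identity $(*)$ of Proposition~\ref{reg_emb}, and then sum the resulting expression over $c\in\Delta_4$. The approach proceeds in three stages: first sum $(*)$ over the $\widehat c$-residues of $\G$, then over the three cyclic permutations of $\widehat c$ up to inversion, and finally over the five colors $c\in\Delta_4$.

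For the first stage, fix $c$ and a cyclic permutation $\varepsilon$ of $\widehat c$. Each $\widehat c$-residue $\L$ is a connected $4$-colored graph of some order $2p_{\L}$, with $\sum_{\L} p_{\L}=p$, so applying $(*)$ to $\L$ and summing over the $g_{\widehat c}$ residues (noting that every $\{i,j\}$-bigon of $\G$ with $i,j\in\widehat c$ lies in a unique $\widehat c$-residue) gives
\begin{equation*}
2\,g_{\widehat c}-2\rho_{\widehat c,\varepsilon}\;=\;\sum_{j\in\mathbb{Z}_4}g_{\varepsilon_j,\varepsilon_{j+1}}-2p,
\end{equation*}
where $\rho_{\widehat c,\varepsilon}:=\sum_{\L}\rho_\varepsilon(\L)$. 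Since the G-degree is additive over connected components, $\omega_G(\G_{\widehat c})=\sum_\varepsilon \rho_{\widehat c,\varepsilon}$, the sum running over the three cyclic permutations of $\widehat c$. Summing the displayed identity over these three permutations then requires the combinatorial fact that each of the six unordered pairs $\{i,j\}\subset\widehat c$ appears as an adjacent pair in exactly two of the three cyclic permutations; this is easily verified by listing the three Hamiltonian cycles on $K_4$. The outcome is
\begin{equation*}
\omega_G(\G_{\widehat c})\;=\;3\,g_{\widehat c}+3p-\sum_{\{i,j\}\subset\widehat c}g_{i,j}.
\end{equation*}

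It then remains to sum over $c\in\Delta_4$. Here I would use $\sum_c g_{\widehat c}=|\R_4(\G)|$ together with the observation that each pair $\{i,j\}\subset\Delta_4$ is contained in $\widehat c$ for exactly $5-2=3$ values of $c$, hence $\sum_c \sum_{\{i,j\}\subset\widehat c}g_{i,j}=3|\R_2(\G)|$. Substitution yields
\begin{equation*}
\sum_{c\in\Delta_4}\omega_G(\G_{\widehat c})\;=\;3|\R_4(\G)|+15p-3|\R_2(\G)|\;=\;3\rho_G(\G),
\end{equation*}
matching the definition $\rho_G(\G)=|\R_4(\G)|+5p-|\R_2(\G)|$. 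The only non-bookkeeping step is the pair-counting fact for the cyclic permutations of $\widehat c$, and that is elementary; I do not anticipate any real obstacle beyond correctly tracking the coefficients in the two double sums.
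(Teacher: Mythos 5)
Your proof is correct, but it follows a genuinely different route from the paper's. The paper disposes of the lemma in two lines by quoting two external identities from \cite{[CCDG]} for $d=4$ --- Proposition~7 there gives $\omega_G(\G)=3(4+6p-|\R_2(\G)|)$ and Lemma~13 gives $\omega_G(\G)=3(4+p-|\R_4(\G)|)+\sum_{c\in\Delta_4}\omega_G(\G_{\widehat c})$ --- and simply subtracts. You instead compute $\omega_G(\G_{\widehat c})$ from first principles out of the Euler--characteristic relation $(*)$: summing $(*)$ over the connected components of $\G_{\widehat c}$ and then over the three cyclic permutations of $\widehat c$ up to inversion, and using the fact that each of the six pairs in $\widehat c$ is adjacent in exactly two of the three Hamiltonian cycles of $K_4$, you obtain the closed formula $\omega_G(\G_{\widehat c})=3g_{\widehat c}+3p-\sum_{\{i,j\}\subset\widehat c}g_{i,j}$; the final summation over $c$ then only needs the count that a given pair lies in $\widehat c$ for exactly three values of $c$. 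I have checked both pair-counting facts and the coefficient bookkeeping, and everything is right; note also that $(*)$ holds verbatim in the non-bipartite case with $\rho_\varepsilon$ read as half the genus, so no case distinction is needed. What your argument buys is self-containedness --- it in effect reproves the $d=4$ instance of Lemma~13 of \cite{[CCDG]} rather than citing it --- plus the per-color formula for $\omega_G(\G_{\widehat c})$ as a by-product, which is finer information than the summed identity and makes transparent why the subdegree $\rho_G(\G)$, defined via a single distinguished permutation $\varepsilon^c$ per color, equals one third of $\sum_c\omega_G(\G_{\widehat c})$, which averages over all three. The paper's route is shorter but opaque without the reference.
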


\begin{proof} The relations of Proposition~7 and Lemma~13 of \cite{[CCDG]} for $d=4$ give: 
$3(4+6p-|\R_2(\G)|)=\omega_G(\G)=3(4+p-|\R_4(\G)|)+\sum_{c\in \Delta_4}\omega_G(\G_{\widehat c})$ and therefore $\sum_{c\in \Delta_4}\omega_G(\G_{\widehat c})=3(5p-|\R_2(\G)|+|\R_4(\G)|)=3\rho_G(\G)$. 
\end{proof}



\begin{prop} Let $\G$ be a supercontracted 5-colored graph, then: 
\begin{itemize}
\item[•] (i) $\omega'_G(\G)\neq 1$;
\item[•] (ii) $\omega'_G(\G)=0$ if and only if $\G$ is the order two graph (representing $S^4$);
\item[•] (iii) $\omega'_G(\G)=2$ if and only if $\G$ is the order four graph of  Figure~4 (representing $S^4$);
\item[•] (iv) $\omega'_G(\G)=3$ if and only if $\G$ is the order four graph in the left of Figure~5 (representing $RP^2\times B^2$).
\end{itemize}
\end{prop}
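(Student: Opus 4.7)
The plan is to reduce the classification to a finite check by bounding the order. From the proof of Lemma~\ref{subdegree} I have the identity $\omega_G(\G)=3(4+6p-|\R_2(\G)|)$, valid for any 5-colored graph of order $2p$; equivalently,
$$\omega'_G(\G)=4+6p-|\R_2(\G)|.$$
For $\G$ supercontracted, $|\R_4(\G)|=5$, so $\rho_G(\G)=5+5p-|\R_2(\G)|$, and subtracting yields the crucial identity $\rho_G(\G)=\omega'_G(\G)-p+1$. Since each $\rho_{\widehat c}$ is a non-negative regular genus, $\rho_G(\G)\ge 0$, and hence
$$p\le \omega'_G(\G)+1.$$

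For $\omega'_G=0$ this bound forces $p=1$, and the unique 5-colored graph of order two (two vertices joined by five parallel edges) is supercontracted with $|\R_2|=10$, giving $\omega'_G=0$ and proving (ii). For the case $p=2$, which handles the remaining orders in (i) and the base cases of (iii)--(iv), I would enumerate supercontracted 5-colored graphs of order four explicitly. Each colour is one of the three perfect matchings of $K_4$, so the graph is encoded by the multiset of matching-types appearing among the five colours. Supercontractedness excludes any type-distribution in which four or five colours share the same matching; the surviving distributions are $(3,2)$, $(3,1,1)$ and $(2,2,1)$. A pair of colours of equal matching-type contributes two length-$2$ bigons, while a pair of distinct matching-type contributes a single length-$4$ bigon, giving respectively $|\R_2|=14,13,12$ and thus $\omega'_G=2,3,4$. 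By Proposition~\ref{order_4} the first two graphs are those of Figure~4 (bipartite, $S^4$) and of Figure~5 (non-bipartite, $RP^2\times B^2$), which finishes the $p=2$ analysis for (iii) and (iv) and simultaneously shows that $\omega'_G=1$ is not attained, settling (i).

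What remains are the residual cases left open by the bound: $p=3$ in (iii), and $p=3,4$ in (iv). In each of these the bound is essentially saturated, so $\rho_G(\G)\in\{0,1\}$. The 4-colored analogue of the main identity, $\omega_G(\G_{\widehat c})=3+3p-|\R_2(\G_{\widehat c})|$, combined with Lemma~\ref{subdegree} and the compatibility identity $\sum_{c\in\Delta_4}|\R_2(\G_{\widehat c})|=3|\R_2(\G)|$, forces each residue $\G_{\widehat c}$ to realise a nearly maximum number of 2-residues, and in particular $\rho_{\widehat c}=0$ for essentially every $c$. Exploiting the explicit cyclic formula $\sum_i g_{\varepsilon^c_i,\varepsilon^c_{i+1}}=2p+2$ from Proposition~\ref{reg_emb}, the tight bound $g_{ij}\le p$ on each bigon count, and the supercontractedness of $\G$, one should over-determine the bigon vector and derive arithmetic contradictions in these small-order cases.

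The main obstacle I anticipate is precisely this final step: the arithmetic is extremely tight, and a clean contradiction will probably require carefully tracking which pairs of colours share a matching in each $\G_{\widehat c}$. If a direct combinatorial argument proves elusive, a short computer-aided enumeration of all supercontracted 5-colored graphs of order $\le 8$, in the spirit of the order-six tabulation underlying Proposition~\ref{order_6}, would close the gap.
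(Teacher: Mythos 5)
Your reduction to small orders is exactly the paper's: the identity $\rho_G(\G)=\omega'_G(\G)+1-p$ together with $\rho_G(\G)\ge 0$ gives $p\le\omega'_G(\G)+1$, and your hand enumeration of the order-four supercontracted graphs via the three perfect matchings of $K_4$ (distributions $(3,2)$, $(3,1,1)$, $(2,2,1)$ with $\omega'_G=2,3,4$) is correct and in fact more explicit than the paper, which merely asserts these values. This settles (i), (ii), and the ``if'' directions of (iii) and (iv).

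The genuine gap is that the residual cases --- $p=3$ for (iii) and (iv), and $p=4$ for (iv) --- are not actually proved; you acknowledge as much. Your hope that the tight arithmetic on bigon counts ``should over-determine the bigon vector'' is not substantiated, and it is unlikely to close the order-six case cleanly: note that for $p=3$, $\omega'_G=2$ one has $\rho_G(\G)=0$, so every $\widehat c$-residue has regular genus zero and no arithmetic contradiction is available (indeed $\omega_G=6$ is even, so even the parity constraints of \cite{[CG]} are silent); the paper disposes of order six by a computer enumeration showing that all $8$ bipartite and $31$ non-bipartite supercontracted $5$-colored graphs of order six have $\omega'_G>3$. More importantly, you miss the argument that makes the order-eight case tractable without any enumeration: if $p=4$ and $\omega'_G(\G)=3$ then $\rho_G(\G)=0$, so by Lemma~\ref{subdegree} $\sum_{c\in\Delta_4}\omega_G(\G_{\widehat c})=0$; hence every $4$-residue represents $S^3$ and $\widehat M_{\G}$ is a \emph{closed} $4$-manifold, whence by Theorem~2 of \cite{[CG]} $\omega_G(\G)$ must be a multiple of six, contradicting $\omega_G(\G)=9$. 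Your fallback of enumerating all supercontracted graphs up to order eight would in principle work but is a substantially heavier computation than what the statement requires; as written, the proof is incomplete.
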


\begin{proof} It is straightforward that the order two graph has zero (reduced) G-degree. Let $2p$ be the order of $\G$. Up to isomorphism there is only a bipartite supercontracted graph (resp. two non-bipartite supercontracted graphs) of order four, namely the one of  Figure~4 (resp. the two of  Figure~5), and it has reduced G-degree $=2$ (resp. they have reduced G-degree $=3$ and $=4$ respectively). Furthermore, up to isomorphism there are 8 bipartite (resp. 31 non-bipartite) supercontracted graphs of order six\footnote{This result has been obtained by a computer program by adapting the algorithmic procedure described in \cite{[CC]}.}, and a direct computation says that their reduced G-degree is always $>3$.

From the proof of Lemma~\ref{subdegree} we have: $\rho_G(\G)=\omega_G(\G)/3-4-p+|\R_4(\G)|=\omega'_G(\G)+1-p$, since $\G$ is supercontracted, and therefore $\omega'_G(\G)\ge p-1$. If $\omega'_G(\G)=0$ then $p\le 1$, which proves (ii). 
If $\omega'_G(\G)=1$ then $p\le 2$, which proves (i). 
If $\omega'_G(\G)=2$ then $p\le 3$, which proves (iii). 
If $\omega'_G(\G)=3$ then $p\le 4$. In this case, if $p=4$ then $\sum_{c\in \Delta_4}\omega_G(\G_{\widehat c})=3\rho_G(\G)=0$. This means that all the 4-residues of $\G$ represent $S^3$ (see Propositions~8 and~9 of \cite{[CCDG]}) and therefore $\widehat M_{\G}$ should be a closed 4-manifold. By Theorem~2 of \cite{[CG]} the G-degree $\omega_G(\G)$ would be a multiple of six, in contrast with the assumption $\omega_G(\G)=9$. This concludes the proof.
\end{proof}

\begin{figure}[h!]                      
\begin{center}                         
\includegraphics[width=10cm]{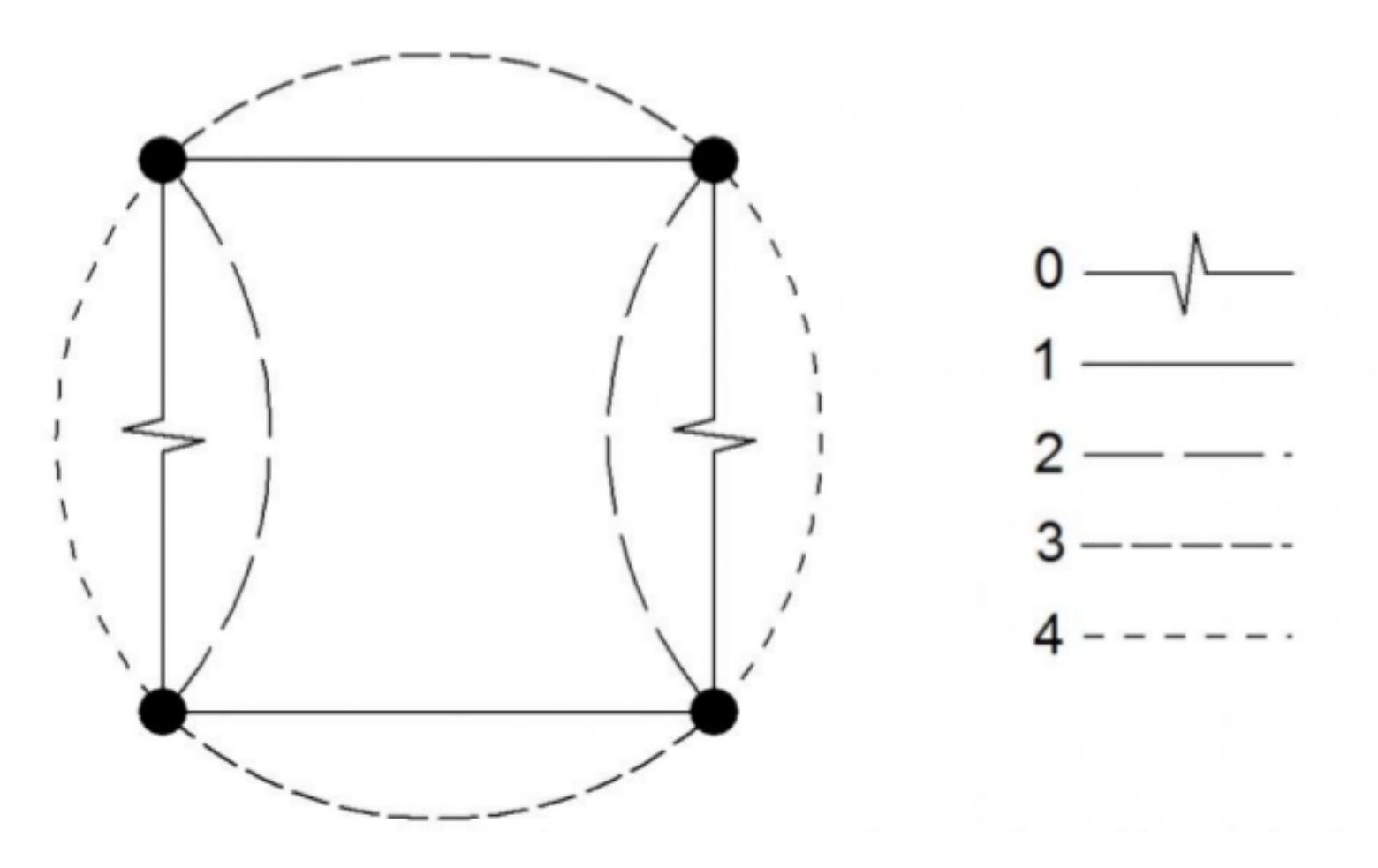}
\caption[legenda elenco figure]{A graph representing $S^4$.}\label{fig3}
\end{center}
\end{figure}

\begin{figure}[h!]                      
\begin{center}                         
\includegraphics[width=15cm]{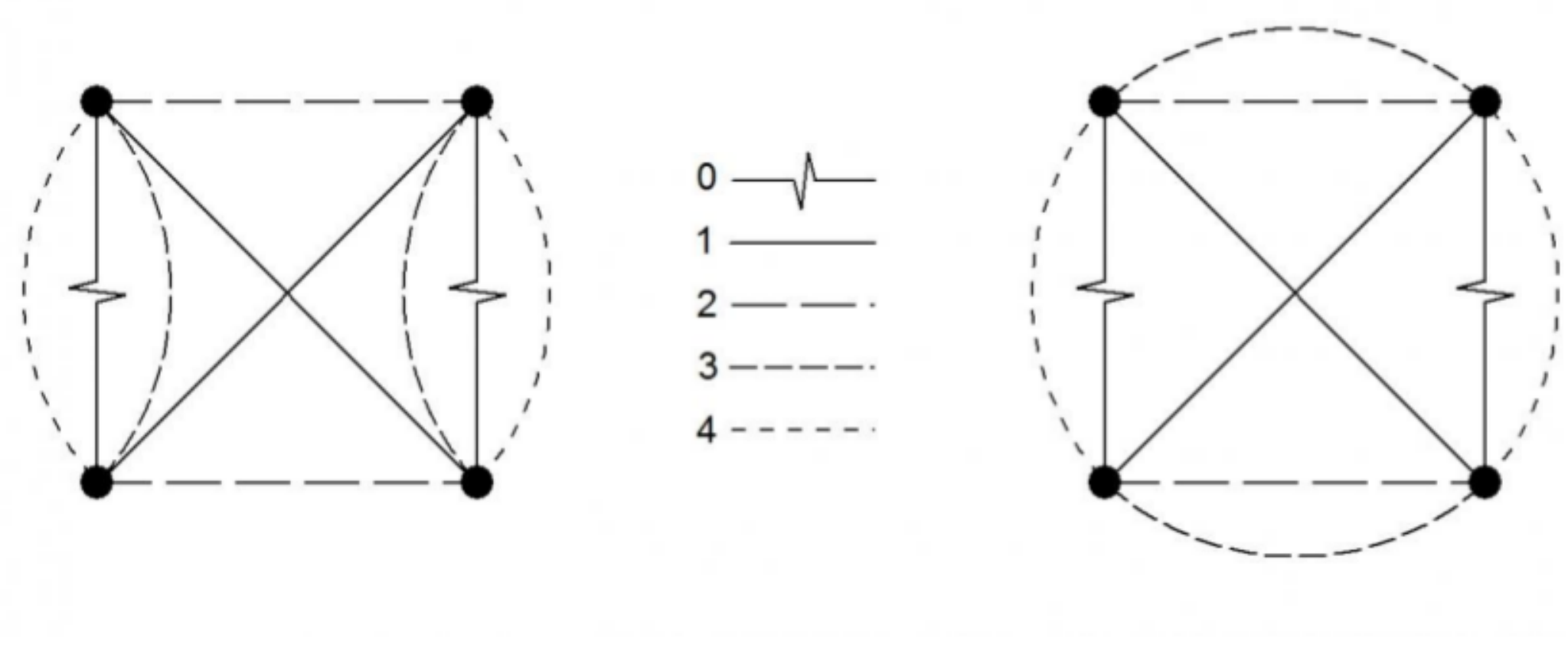}
\caption[legenda elenco figure]{Two graphs representing $RP^2\times B^2$.}\label{fig4}
\end{center}
\end{figure}
\bigskip
\bigskip

{\it Acknowledgement.} 
The authors have been supported by the "National Group for
Algebraic and Geometric Structures, and their Applications" (GNSAGA-INdAM) and
University of Modena and Reggio Emilia and University of Bologna, funds for selected research topics. The authors wish to thank Paola Cristofori for her help in generating the catalogue of all order six 5-colored graphs.

\bigskip

\begin{flushleft}

Luigi~GRASSELLI\\
Dipartimento di Scienze e Metodi dell'Ingegneria, Universit\`a di Modena e Reggio Emilia\\
Via Giovanni Amendola, 41-43, 42122 Reggio Emilia, ITALY\\
e-mail: \texttt{luigi.grasselli@unimore.it}\\

\bigskip

Michele~MULAZZANI\\
Dipartimento di Matematica and ARCES, Universit\`a di Bologna\\
Piazza di Porta San Donato 5, 40126 Bologna, ITALY\\
e-mail: \texttt{michele.mulazzani@unibo.it}\\

\end{flushleft}

\end{document}